\documentclass{article}
\usepackage{graphicx} 
\usepackage{hyperref}
\usepackage{amsmath}
\usepackage{amssymb}
\usepackage{amsthm}
\usepackage{tikz}
\usetikzlibrary{cd}
\usepackage{biblatex}
\addbibresource{bibliography.bib}
\usepackage{geometry}

\usepackage{mathpazo}
\usepackage[libertine,cmintegrals,cmbraces,vvarbb]{newtxmath}
\DeclareMathAlphabet{\mathcal}{OMS}{cmsy}{m}{n}

\title{Coherent Conditions: Algebraic Geometry for Arbitrary Classes of Algebras}
\author{K. R. van Nispen}

\theoremstyle{plain}
\newtheorem{theorem}{Theorem}[section]
\newtheorem{lemma}[theorem]{Lemma}
\newtheorem{corollary}[theorem]{Corollary}
\newtheorem{proposition}[theorem]{Proposition}

\theoremstyle{definition}
\newtheorem{definition}{Definition}[section]

\newtheorem{remark}[theorem]{Remark}

\geometry{rmargin=2cm,lmargin=2cm}

\newcommand{\Spec}{\operatorname{Spec}}
\newcommand{\RSpec}{\operatorname{RSpec}}
\newcommand{\nil}{\operatorname{nil}}
\newcommand{\rad}{\operatorname{rad}}

\begin{document}

\maketitle

\begin{abstract}
    Universal algebraic geometry is generalised from solutions of equations in a single algebra to the study of $\varphi$- or $K$-spectra, akin to the prime spectrum of a ring.
    We explore their basic properties and constructions, give a correspondence between certain quantifier-free propositions and closed sets in the Zariski topology of a free algebra, and show the connection with current UAG.
    Lastly, equationally Noetherian classes and irreducible spectra are explored.
\end{abstract}

\tableofcontents

\pagebreak

\section*{Introduction}

Classical algebraic geometry is the study of algebraic sets over some algebraically closed field and their corresponding coordinate algebras.
It found its counterparts in other areas of math, when people began studying the algebraic geometry of groups (some of the first being \cite{GroupsWithParExp}, \cite{OneVarEqinFreeGroups} and \cite{VerbalTopology}, more of these covered in \cite{daniyarova2011algebraicgeometryalgebraicstructures}), for example, or Lie algebras (perhaps starting with \cite{ZeroDivInAmal}, and continued in \cite{Daniyarova_2005}, \cite{Daniyarova_2006}, \cite{2Daniyarova_2006}).
As these did not have as many nice properties as fields, like the ability to perform localisation, or have the union of two algebraic sets again be an algebraic set, there naturally arise different, and often more logic-focused, questions.\par
Very recently, in 2002, B. Plotkin submitted his paper \textit{Seven Lectures on the Universal Algebraic Geometry} \cite{plotkin2002sevenlecturesuniversalalgebraic}, laying the groundwork for the universalisation of these (classical) algebraic geometries into the domain of universal algebra.
It, among other things, showed the fundamental connection between algebraic geometry and algebraic logic, an idea which has persisted and carried on into this paper.
Later, in 2008, the paper \textit{Unification theorems in algebraic geometry} by E. Daniyarova, Alexei G. Myasnikov and V. Remeslennikov \cite{Daniyarova2008UnificationTI} was submitted, unifying a number of theorems about so-called equationally Noetherian algebras.
This was followed in 2010 by \cite{daniyarova2011algebraicgeometryalgebraicstructures}, further expanding on the unification theorems and exhibiting the categorical duality between algebraic sets and coordinate algebras.
A number of papers followed, also by E. Daniyarova, Alexei G. Myasnikov and V. Remeslennikov, exploring the universal algebraic geometry over a single algebra in depth.\par
In this paper we aim to generalise this notion of algebraic geometry to arbitrary classes of algebras of the same type, similarly to the prime spectrum of a ring, called coherent conditions.
Unfortunately we lose the convenience of our algebraic sets being `affine' - subsets of $A^n$ for some algebra $\mathbf{A}$ - but in section 3.2 it will be shown briefly that, in the case where the class of algebras $K$ consists of a single algebra, all the properties one cares about are carried over equivalently.\par
Section 2 will be devoted to introducing coherent conditions, and exploring their elementary properties. The first, lattice-theoretic part of the generalised Nullstellensatz will be stated.
In Section 3, we consider the spectra of $V$-free algebras, and state the fundamental connections between coherent conditions and equational logic, one of note being the second, logic part of the generalised Nullstellensatz.
In Section 4, we explore equationally Noetherian classes of algebras, and loosely generalize Unification Theorem C in \cite{daniyarova2011algebraicgeometryalgebraicstructures}.
Lastly, in Section 5, we cover irreducible closed sets, and give a classification of reduced algebras with an irreducible spectrum, similar to that in \cite{daniyarova2011algebraicgeometryalgebraicstructures} or \cite{Daniyarova2008UnificationTI}.

\section{Preliminaries}

\subsection{Universal algebra}

In this paper we will mainly use the notation and conventions of \cite{burris}, besides the fact that we conventionally include $0$ in the natural numbers.

A type, or signature $\mathscr{F}$ is a set of operation symbols $f$ each with an associated natural number $n$ called its \textit{arity}. $\mathscr{F}_n$ denotes the operation symbols in $\mathscr{F}$ of arity $n$.
An algebra $\mathbf{A}$ of type $\mathscr{F}$ is a tuple,
$$\mathbf{A} = \langle A; (f^\mathbf{A})_{f\in \mathscr{F}}\rangle$$
such that, if $f \in \mathscr{F}_n$, then $f^\mathbf{A} \colon A^n \rightarrow A$ is an $n$-ary operation on $A$.
The $f^\mathbf{A}$ are called the fundamental operations of $\mathbf{A}$, and $A$ the \textit{universe} or underlying set of $\mathbf{A}$.
The set of homomorphisms between $\mathbf{A}$ and $\mathbf{B}$ is denoted $\hom(\mathbf{A}, \mathbf{B})$, and the lattice of congruences of $\mathbf{A}$, $\operatorname{Con}\mathbf{A}$.\par
The set of terms in the type $\mathscr{F}$ with variables in $X$ is written as $T_\mathscr{F}(X)$, or simply $T(X)$ if the signature is known from context.
This defines the absolutely free algebra $\mathbf{T}(X)$.
$\text{At}_\mathscr{F}(X)$ denotes the set of atomic formulas of $\mathscr{F}$ with variables in $X$; the equational identities $p\approx q$ for $p, q \in T(X)$.\par
If $T \subseteq \operatorname{At}(X)$ is a set of equations, then for an algebra $\mathbf{A}$ and $\vec{a}  \in A^X$ we write $\mathbf{A} \models T(\vec{a})$ if $\vec{a}$ satisfies all equations in $T$ in $\mathbf{A}$.
In other words, $T$ can be used as a shorthand for the proposition $$\bigwedge_{p\approx q \in T}p\approx q$$
Formulas of the form $T \rightarrow t\approx s$ are called \textit{preidentities}.
A \textit{quasi-identity} is a preidentity where $T$ is finite.\par

We distinguish the following operators for a class of algebras $K$:
\begin{enumerate}
    \item $IS(K)$, the class of algebras embeddable into some member of $K$.
    \item $P(K)$, the class of algebras which are some arbitrary product of members of $K$.
    \item $ISP(K)$, the least prevariety containing $K$; the class of algebras embeddable into some product of members in $K$.
    Like \cite{daniyarova2011algebraicgeometryalgebraicstructures}, we assume the convention of a product indexed by the empty set to be the trivial algebra.
    \item $Q(K)$, the class of algebras satisfying the same quasi-identities as $K$; the least quasivariety containing $K$.
    \item $V(K) = HSP(K)$, the least variety containing $K$.
    \item $K_\omega$, the finitely generated algebras in $K$.
    \item $\textbf{Res}(K)$ the class of algebras separated by $K$
    \item $\textbf{Sep}_\omega(K)$ the class of algebras $n$-separated by $K$ for all $n$. 
    \item $\mathbf{Dis}(K)$, the class of algebras discriminated by $K$
\end{enumerate}

The definitions of a class seperated by or discriminated by $K$ are already given in, for example, \cite{daniyarova2011algebraicgeometryalgebraicstructures}.
However, the notion of $n$-separated is one we introduce ourselves.

\begin{definition}
    A homomorphism $h \colon \mathbf{A} \rightarrow \mathbf{B}$ \textit{separates} two elements from $A$ if they are unequal in the image, or equivalently lie in different blocks in the kernel of $h$.\par
    An algebra $\mathbf{A}$ is separated by $K$ if for any $a_0\neq a_1 \in A$ there is a homomorphism into some member of $K$ separating $a_0$ and $a_1$.
    An algebra is $n$-separated by $K$ if for any finite family of separated pairs $a_i\neq b_i$ there is a homomorphism $h$ into $K$ with $h(a_i)\neq h(b_i)$ for all $i$.\par
    Lastly, an algebra is discriminated by $K$ if it is locally embeddable into $K$; for any finite $W\subseteq A$ there is a homomorphism into $K$ such that its restriction onto $W$ is injective, i.e. pairwise separates the elements from $W$.
\end{definition}

For a class $K$, the free algebra generated by $X$ is denoted $\mathbf{F}_K(X)$, and is defined as the quotient $T(X)/\psi$ where $\psi = \{ \langle p, q\rangle \in T(X)^2 \mid K\models p\approx q\}$.
Elements of $\mathbf{F}_K(X)$ will freely be identified by a term representing the corresponding  equivalence class.
Furthermore, we will treat subsets $T \subseteq F_K(X)^2$ as systems of equations, as, for any two choices of representatives $T_1', T_2' \subseteq \text{At}(X)$ and class $L$ contained in $V(K)$ we have $L \models T'_1 \iff L \models T'_2$.
This means that we can also treat $T$ as a (possibly infinitary) proposition on algebras in $V(K)$.

\subsection{Closure operators}

A closure operator on a set $X$ is a function $C \colon \mathcal{P}(X)\rightarrow \mathcal{P}(X)$, $\mathcal{P}(X)$ being the powerset of $X$, such that for $A, B\subseteq X$:
\begin{itemize}
    \item $A \subseteq C(A)$
    \item $A\subseteq B \implies C(A)  \subseteq C(B)$
    \item $C(C(A)) = C(A)$
\end{itemize}
The pair $\langle X, C\rangle$ is then a closure system, which we will often refer to as simply $X$.\par
The set of closed sets, $\operatorname{cl}(C)$ (consequently $\operatorname{cl}(X)$), is the set of $C(A)$ for $A \subseteq X$.
These are closed under arbitrary intersection and, in fact, closure operators are in 1-1 correspondence with subsets of $\mathcal{P}(X)$ closed under arbitrary intersection.
For such a collection $M \subseteq \mathcal{P}(X)$, the corresponding closure operator is then given by
$$C_M(A) = \bigcap \{ S \in M \mid A\subseteq S\}$$

Every closure system $\langle X, C\rangle$ yields a complete lattice structure on $\operatorname{cl}(C)$.
The arbitrary meet is, naturally, given by intersection, and for $(A_i)_{i \in I}$ a collection of closed sets $\bigvee_{i\in I} A_i := C(\bigcup_{i \in I} A_i)$.\par

A morphism of closure systems $f \colon \langle X, C\rangle \rightarrow \langle Y, D\rangle$ is a function $f\colon X\rightarrow Y$ where the preimage of a closed set of $Y$ is again closed in $X$.
The composition of two morphisms is again a morphism, and this yields the category of closure operators.\par
We then have a nonstandard definition, which shall be very useful in our case.

\begin{definition}
    A quasi-isomorphism is a morphism of closure systems $f \colon X \rightarrow Y$ such that $f$ maps closed sets of $X$ to closed sets of $Y$, and $f^{-1} \colon \operatorname{cl}(D) \rightarrow \operatorname{cl}(C)$ is a bijection.
\end{definition}

\begin{proposition} \label{prop:qiisnice}
    Let $f \colon X\rightarrow Y$ be a quasi-isomorphism.
    Denote the mapping $B \mapsto f^{-1}(B)$ for closed sets $B \subseteq Y$ with $F$.
    Then:
    \begin{enumerate}
        \item The mapping $G : A \mapsto f(A)$ for closed sets $A\subseteq X$ is the inverse of $F$.
        \item $G$ preserves arbitrary intersections.
    \end{enumerate}
\end{proposition}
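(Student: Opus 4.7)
The plan is to show everything from two facts: preimage is order-preserving and preserves arbitrary intersections (as a set-theoretic operation), while $F$ is given to be bijective on the lattices of closed sets. The inverse $G$ then comes out almost for free by a sandwich argument.

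For part (1), I would first note that $G$ is well-defined because a quasi-isomorphism maps closed sets to closed sets, and $F$ is well-defined and bijective by hypothesis. To show $F \circ G = \mathrm{id}$, take a closed $A \subseteq X$. The general set-theoretic inclusion $A \subseteq f^{-1}(f(A)) = F(G(A))$ gives one direction after applying $F$ to the closed set $f(A)$. For the other, I would use monotonicity of $F$: let $B \in \mathrm{cl}(D)$ be the unique closed set with $F(B) = A$ (which exists by bijectivity of $F$). Then $f(A) \subseteq B$ because $A = f^{-1}(B)$, so applying $F$ yields $F(f(A)) \subseteq F(B) = A$. Combining with $A \subseteq F(f(A))$, we get $F(f(A)) = F(B)$, and injectivity of $F$ gives $f(A) = B$, that is, $F(G(A)) = A$. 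The other direction $G(F(B)) = B$ follows by the same sandwich: $f(f^{-1}(B)) \subseteq B$ always, so $F(f(f^{-1}(B))) \subseteq F(B)$, while the general inclusion $f^{-1}(B) \subseteq f^{-1}(f(f^{-1}(B)))$ gives the reverse, hence injectivity of $F$ forces $f(f^{-1}(B)) = B$.

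For part (2), once we know $G = F^{-1}$, preservation of arbitrary intersections is essentially transported across the bijection. Let $(A_i)_{i \in I}$ be closed sets in $X$, and set $B_i := G(A_i)$, so that $A_i = F(B_i)$. Since preimage preserves arbitrary intersections as a set-theoretic operation, and $\bigcap_i B_i$ is closed (closed sets are closed under arbitrary intersections),
\[
\bigcap_{i \in I} A_i \;=\; \bigcap_{i \in I} F(B_i) \;=\; F\!\left(\bigcap_{i \in I} B_i\right).
\]
Applying $G$ to both sides and using part (1):
\[
G\!\left(\bigcap_{i \in I} A_i\right) \;=\; \bigcap_{i \in I} B_i \;=\; \bigcap_{i \in I} G(A_i).
\]

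The only genuinely nontrivial point is the sandwich step in part (1), which requires seeing that monotonicity of $F$ together with its injectivity lets us collapse $f(A)$ to the preimage-witness $B$; everything else is bookkeeping with the set-theoretic identities $A \subseteq f^{-1}(f(A))$ and $f(f^{-1}(B)) \subseteq B$, and the automatic preservation of intersections by preimages.
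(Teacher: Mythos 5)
Your proof is correct, and in part (1) it is actually a touch more careful than the paper's. The paper opens by asserting $f(f^{-1}(A)) = A$ for all $A \subseteq Y$, which as a set-theoretic identity requires $f$ to be surjective; this is in fact automatic for a quasi-isomorphism (if $\operatorname{im} f \subsetneq Y$ then $f^{-1}(\operatorname{im} f) = X = f^{-1}(Y)$ would contradict injectivity of $F$, since both $\operatorname{im} f = f(X)$ and $Y$ are closed), but the paper does not say so. Your sandwich argument sidesteps this entirely: you only use the unconditional inclusions $S \subseteq f^{-1}(f(S))$ and $f(f^{-1}(T)) \subseteq T$ together with monotonicity and injectivity of $F$ on closed sets, so no surjectivity claim is needed. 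Your detour through injectivity after obtaining $A \subseteq F(f(A)) \subseteq A$ is redundant (the sandwich already forces $F(f(A)) = A$), but it is harmless. For part (2), your argument — write $A_i = F(B_i)$, pull the intersection through $F = f^{-1}$, then apply $G$ — is the same transport the paper performs when it rewrites $G(\bigcap_i A_i)$ as $G(F(\bigcap_i G(A_i)))$.
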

\begin{proof}
    To begin with $(1)$, note that $f(f^{-1}(A)) = A$ for all $A\subseteq Y$, so we have, for $A \subseteq Y$ closed:
    $$G(F(A)) = f(f^{-1}(A)) = A$$
    Thus $G\circ F = id$.
    As $F$ is a bijection, and $G$ is a well-defined function, it must also be a bijection, thus being the inverse of $F$.\par
    For $(2)$, let $(A_i)_{i  \in I}$ be a family of closed sets of $X$.
    Then notice:
    \begin{align*}
        G(\bigcap_{i \in I} A_i) &= G(\bigcap_{i \in I} F(G(A_i)))\\
        &= G(F(\bigcap_{i \in I} G(A_i)))\\
        &= \bigcap_{i \in I} G(A_i)
    \end{align*}
    and we are done.
\end{proof}

It will be apparent that many properties of a closure system (even those which are not properties of the complete lattice of closed sets) are equivalent to those same properties of a quasi-isomorphic closure system.

In general the closed sets of a closure operator do not form a topology on the underlying set.
This is only if the closure operator is topological; for all $A, B \subseteq X$:
$$C(A\cup B) = C(A)\vee C(B) = C(A)\cup C(B)$$
\begin{remark}
    As both the direct image and preimage maps ($G$ and $F$ in the above proposition) preserve unions, $X$ is topological iff $Y$ is, whenever there is a quasi-isomorphism between them.
\end{remark}
To deal with this, we introduce the concept of topologization.

\begin{definition}
    If $X$ is a closure system, then $X^\text{top}$ denotes the topological closure system with prebasis $\operatorname{cl}(X)$, called the topologization of $X$.
    The sets which are closed in $X^\text{top}$ are called \textit{topologically closed} in $X$.
\end{definition}

The reader can verify that the topologically closed sets are always of the form
$$\bigcap_{i \in I} A_{i, 1} \cup \dots\cup A_{i, k_i}$$
for $A_{i, k}$ closed in $X$, and $k_i$ is potentially $0$ for the closed set $\varnothing$.

\begin{proposition} \label{prop:topisendfunc}
    If $f\colon X\rightarrow Y$ is a morphism of closure systems, then the same function
    $\hat{f} \colon X^\text{top} \rightarrow Y^\text{top}$ is continous (also a morphism of closure systems).
\end{proposition}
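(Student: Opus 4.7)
The plan is to verify the defining property of a morphism of closure systems directly: that the preimage under $\hat{f}$ of any topologically closed set in $Y$ is topologically closed in $X$. My approach will be to exploit the concrete description of topologically closed sets given just before the proposition, namely that every topologically closed subset of $Y$ has the form $\bigcap_{i \in I}(B_{i,1}\cup\dots\cup B_{i,k_i})$ with each $B_{i,k}$ closed in $Y$.

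First I would recall the elementary set-theoretic fact that preimage commutes with arbitrary intersections and finite unions, so that
\[
f^{-1}\!\left(\bigcap_{i\in I}\bigl(B_{i,1}\cup\dots\cup B_{i,k_i}\bigr)\right)=\bigcap_{i\in I}\bigl(f^{-1}(B_{i,1})\cup\dots\cup f^{-1}(B_{i,k_i})\bigr).
\]
Then, since $f$ is by hypothesis a morphism of closure systems $X\to Y$, each $f^{-1}(B_{i,k})$ is closed in $X$. The right-hand side is therefore of precisely the stated form that characterises topologically closed sets of $X$, so it lies in $\operatorname{cl}(X^{\text{top}})$. This establishes that $\hat{f}$ is a morphism $X^{\text{top}}\to Y^{\text{top}}$.

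An even cleaner variant, which I might prefer in the write-up, is to invoke the universal property of the prebasis: to check continuity of a map into a topological space it suffices to check that preimages of prebasic closed sets are closed. The prebasis for $Y^{\text{top}}$ is $\operatorname{cl}(Y)$, and preimages of members of $\operatorname{cl}(Y)$ under $f$ lie in $\operatorname{cl}(X)\subseteq \operatorname{cl}(X^{\text{top}})$ by hypothesis, so continuity follows immediately.

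There is no real obstacle here; the only thing to be careful about is phrasing, since the statement asserts two things simultaneously (continuity and being a closure-system morphism), which coincide for topological closure systems but are a priori distinct notions. I would make a brief remark that for topological closure systems the two notions agree, so the parenthetical clause in the statement is automatic once continuity is established.
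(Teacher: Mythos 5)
Your proof is correct and follows the same route as the paper, which simply notes that $f^{-1}$ preserves unions and intersections and applies this to the concrete description of topologically closed sets. The prebasis variant you mention is a clean shortcut but amounts to the same idea.
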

\begin{proof}
    Straightforward from the fact that $f^{-1}$ preserves unions and intersections.
\end{proof}

\begin{corollary}
    An isomorphism of closure systems yields a homeomorphism of their topologisations.
\end{corollary}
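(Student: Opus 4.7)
The plan is to leverage Proposition \ref{prop:topisendfunc}, which already guarantees that topologization sends morphisms of closure systems to morphisms of their topologizations. By definition, an isomorphism of closure systems $f \colon X \to Y$ is a morphism with a two-sided inverse $g \colon Y \to X$ which is itself a morphism of closure systems. Applying Proposition \ref{prop:topisendfunc} to each of $f$ and $g$, I obtain continuous maps $\hat{f} \colon X^{\text{top}} \to Y^{\text{top}}$ and $\hat{g} \colon Y^{\text{top}} \to X^{\text{top}}$.

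The key observation is then that topologization does not alter the underlying set or the underlying set-theoretic function: $\hat{f}$ and $f$ coincide as functions between the underlying sets, and likewise for $\hat{g}$ and $g$. Consequently, the compositions $\hat{g} \circ \hat{f}$ and $\hat{f} \circ \hat{g}$ agree with the identity maps on $X$ and $Y$ respectively, which in turn serve as the identity continuous maps on $X^{\text{top}}$ and $Y^{\text{top}}$. Hence $\hat{f}$ is a continuous bijection with continuous inverse, i.e.\ a homeomorphism. I do not expect any real obstacle here: this is essentially the statement that topologization is functorial, and functors automatically send isomorphisms to isomorphisms.
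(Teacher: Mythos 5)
Your proof is correct and matches the paper's (implicit) reasoning: the corollary is stated immediately after Proposition~\ref{prop:topisendfunc} precisely because topologization is functorial, so an isomorphism of closure systems (a morphism with a two-sided inverse morphism) is carried to a pair of mutually inverse continuous maps, i.e.\ a homeomorphism. No gaps.
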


\pagebreak

\section{Coherent conditions}

Unless otherwise specified, in this section we fix a variety $V$ over the type $\mathscr{F}$.

\subsection{$\varphi$-spectra, and fundamental classes}

The study of modern algebraic geometry relies for a big part on the following two properties:
\begin{enumerate}
    \item Let $f\colon R\rightarrow S$ be a ring homomorphism, and $\mathfrak{p}\subseteq S$ a prime ideal, then $f^{-1}(\mathfrak{p})$ is also prime.
    In other words: $f^{-1}(\Spec S) \subseteq \Spec R$
    \item Let $\mathfrak{p}\subseteq R$ be a prime ideal, and $I\subseteq\mathfrak{p}$, then $\mathfrak{p}/I$ is a prime ideal of $R/I$.
\end{enumerate}
One of the consequences is that $\Spec$ is a contravariant functor from the category of rings to that of topological spaces.
Another is that $I$ is prime iff $R/I$ is an integral domain.
We aim to capture these properties, and their surprisingly far-reaching consequences, with coherent conditions and their associated spectra.

\begin{definition}
    A coherent condition $\varphi$ on $V$ assigns to every algebra $\mathbf{A} \in V$ its $\varphi$-\textit{spectrum} $\Spec^\varphi\mathbf{A}\subseteq \operatorname{Con}\mathbf{A}$ of congruences.
    These spectra have to satisfy the following two conditions:
    \begin{enumerate}
        \item Let $h\colon \mathbf{A} \rightarrow \mathbf{B}$ be a homomorphism.
        Then $h^{-1}(\Spec^\varphi\mathbf{B}) \subseteq \Spec^\varphi\mathbf{A}$
        \item If $\psi \in \Spec^\varphi\mathbf{A}$, and $\theta \subseteq\psi$, then $\psi/\theta \in \Spec^\varphi\mathbf{A}/\theta$
    \end{enumerate}
    Congruences in $\Spec^\varphi\mathbf{A}$ are the $\varphi$-\textit{distinguished congruences} of $\mathbf{A}$.
\end{definition}

An immediate, but important consequence of the definition is the following proposition.

\begin{proposition} \label{prop:natprojisemb}
    Let $\pi_\theta \colon \mathbf{A}\rightarrow \mathbf{A}/\theta$ be the natural projection.
    Then the preimage
    $$\pi_\theta^{-1} \colon \Spec^\varphi\mathbf{A}/\theta \rightarrow \Spec^\varphi\mathbf{A}$$
    is an injection, the image of which being $\{ \psi \in \Spec^\varphi\mathbf{A} \mid \theta \subseteq \psi \}$
\end{proposition}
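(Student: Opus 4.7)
The plan is to verify the statement in three stages: well-definedness, injectivity, and identification of the image, each following almost directly from one of the two defining axioms of a coherent condition together with the classical correspondence theorem for congruences under a surjective homomorphism.

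First I would observe that $\pi_\theta^{-1}$ really does map into $\Spec^\varphi\mathbf{A}$: this is precisely axiom $(1)$ of a coherent condition applied to the homomorphism $\pi_\theta$, so nothing needs to be done here beyond citing it. Next, for injectivity I would invoke the correspondence theorem: since $\pi_\theta$ is surjective, for any congruence $\psi'$ on $\mathbf{A}/\theta$ the preimage $\pi_\theta^{-1}(\psi')$ satisfies $\pi_\theta(\pi_\theta^{-1}(\psi')) = \psi'$, so distinct $\psi'$ have distinct preimages.

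For the image, I would argue both inclusions. The inclusion $\subseteq$ is immediate since $\theta = \ker\pi_\theta \subseteq \pi_\theta^{-1}(\psi')$ for any congruence $\psi'$ on $\mathbf{A}/\theta$, and we have already noted that the preimage lands in $\Spec^\varphi\mathbf{A}$. For the reverse inclusion, given $\psi \in \Spec^\varphi\mathbf{A}$ with $\theta\subseteq\psi$, axiom $(2)$ says $\psi/\theta \in \Spec^\varphi(\mathbf{A}/\theta)$, and by the correspondence theorem $\pi_\theta^{-1}(\psi/\theta) = \psi$, so $\psi$ lies in the image.

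I do not expect any real obstacle: the entire argument is essentially the observation that axiom $(1)$ gives the forward map, axiom $(2)$ supplies the candidate preimage for every congruence above $\theta$, and the correspondence theorem glues these together. The only point that deserves explicit mention is the need to verify that the two operations $\psi \mapsto \psi/\theta$ and $\psi' \mapsto \pi_\theta^{-1}(\psi')$ are mutually inverse on congruences above $\theta$, which is just the standard correspondence theorem and requires no new ideas.
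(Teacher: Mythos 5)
Your proof is correct and follows essentially the same route as the paper's: injectivity via the correspondence theorem, and surjectivity onto the set of congruences above $\theta$ via axiom $(2)$ together with $\pi_\theta^{-1}(\psi/\theta) = \psi$. You are merely more explicit than the paper about well-definedness (axiom $(1)$) and about the inclusion $\ker\pi_\theta \subseteq \pi_\theta^{-1}(\psi')$, both of which the paper leaves implicit.
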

\begin{proof}
    The fact that $\pi_\theta^{-1}$ must be an injection follows from the correspondence theorem.
    Then for the image, take $\psi \in \Spec^\varphi\mathbf{A}$ with $\theta\subseteq\psi$, and notice that $\psi/\theta$ is $\varphi$-distinguished, and $\pi_\theta^{-1}(\psi/\theta) = \psi$.
\end{proof}

Of course, the canonical example of a coherent condition is that of prime ideals on the variety of commutative rings.
As one can check, the set of radical ideals of a commutative ring also forms a coherent condition.
Another important example of a coherent condition is that of maximal congruences in the variety of Boolean algebras, as this leads in part to Stone duality.
This can, however. be seen as a special case of prime ideals on commutative rings, due to the equivalence between Boolean rings and Boolean algebras. \cite{burris}\par
As the reader might have noticed, the above examples of coherent conditions are all defined as sets of congruences where the quotient algebra is a member of some particular class. 
For prime and radical ideals these are respectively the integral domains and reduced rings, and for Boolean algebras the class simply consists of the two-element Boolean algebra.
We can generalise this construction.

\begin{definition}
    Let $K$ be a class of algebras contained in $V$.
    Then $\varphi := \Phi_V(K)$ assigns to every algebra $\mathbf{A} \in K$ the following set of congruences:
    $$\Spec^\varphi\mathbf{A} := \{ \theta \in \operatorname{Con}\mathbf{A} \mid \mathbf{A}/\theta \in IS(K) \}$$
\end{definition}

In other words, the $\Phi_V(K)$-distinguished congruences are precisely the kernels of homomorphisms into $K$.
We will also call $\Spec^\varphi\mathbf{A}$ the $K$-spectrum of $\mathbf{A}$.

\begin{proposition}
    $\varphi := \Phi_V(K)$ is a coherent condition on $V$.
\end{proposition}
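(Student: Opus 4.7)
The plan is to verify the two axioms of a coherent condition directly from the definition of $\Spec^\varphi \mathbf{A} = \{\theta \in \Con \mathbf{A} \mid \mathbf{A}/\theta \in IS(K)\}$. Both verifications will reduce to standard isomorphism theorems from universal algebra, so the proof should be short; the only substantive observation is that the class $IS(K)$ is used rather than $K$ itself, which gives the necessary closure under taking further subalgebras.

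For axiom (1), I would start with a homomorphism $h \colon \mathbf{A} \to \mathbf{B}$ and a $\varphi$-distinguished congruence $\psi \in \Spec^\varphi \mathbf{B}$, meaning $\mathbf{B}/\psi \in IS(K)$. Setting $\theta = h^{-1}(\psi)$ (interpreted as the congruence $\{(a_1, a_2) \in A^2 \mid (h(a_1), h(a_2)) \in \psi\}$), the point is that $\theta$ is precisely the kernel of the composite $\pi_\psi \circ h \colon \mathbf{A} \to \mathbf{B}/\psi$. By the first isomorphism theorem, $\mathbf{A}/\theta$ embeds into $\mathbf{B}/\psi$, and composing this embedding with the embedding witnessing $\mathbf{B}/\psi \in IS(K)$ shows $\mathbf{A}/\theta \in IS(K)$, as required.

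For axiom (2), given $\psi \in \Spec^\varphi \mathbf{A}$ and $\theta \subseteq \psi$, I want to conclude $\psi/\theta \in \Spec^\varphi \mathbf{A}/\theta$, i.e. $(\mathbf{A}/\theta)/(\psi/\theta) \in IS(K)$. The third isomorphism theorem immediately yields $(\mathbf{A}/\theta)/(\psi/\theta) \cong \mathbf{A}/\psi$, and the right-hand side lies in $IS(K)$ by hypothesis on $\psi$.

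There is no real obstacle here; if anything, the only place one might stumble is in forgetting that $\Phi_V(K)$ is defined using $IS(K)$, not $K$, so that the image of the induced map $\mathbf{A}/\theta \hookrightarrow \mathbf{B}/\psi$ in axiom (1) need not itself be isomorphic to a member of $K$, only embeddable into one. With this caveat noted, both axioms follow from one invocation each of the classical isomorphism theorems.
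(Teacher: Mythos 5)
Your proof is correct and follows essentially the same route as the paper's: for axiom (1) the paper likewise builds the induced embedding $\mathbf{A}/h^{-1}(\psi) \hookrightarrow \mathbf{B}/\psi$ and composes with an embedding into $K$ (you phrase this as one invocation of the first isomorphism theorem, which amounts to the same thing), and for axiom (2) both use the third isomorphism theorem. No substantive difference.
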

\begin{proof}
    To prove that $\varphi$ must satisfy the first property, consider a homomorphism $h \colon \mathbf{A} \rightarrow \mathbf{B}$, and $\theta \in \Spec^\varphi\mathbf{B}$, so $\mathbf{B}/\theta$ embeds into some algebra $\mathbf{C} \in K$.
    Then consider the following diagram:
    \begin{center}
        \begin{tikzcd}
            \mathbf{A} \arrow[d] \arrow[r, "h"] & \mathbf{B} \arrow[d]\\
            \mathbf{A}/h^{-1}(\theta) \arrow[r, dashed] & \mathbf{B}/\theta \arrow[r, tail] & \mathbf{C}
        \end{tikzcd}
    \end{center}
    Where the vertical maps are the natural projections, and the dashed arrow is given by the map
    $$x/h^{-1}(\theta) \mapsto h(x)/\theta$$
    which the reader can check is well-defined.
    Furthermore, it is injective basically by definition of the preimage.
    Thus, we get the chain
    $$\mathbf{A}/h^{-1}(\theta) \hookrightarrow \mathbf{B}/\theta \hookrightarrow \mathbf{C}$$
    $$\implies \mathbf{A}/h^{-1}(\theta) \in IS(K)$$
    meaning that $h^{-1}(\Spec^\varphi\mathbf{B}) \subseteq\Spec^\varphi\mathbf{A}$.\par
    For the second property, suppose $\psi, \theta$ congruences of $\mathbf{A}$ with $\psi$ being $\varphi$-distinguished.
    By the third isomorphism theorem:
    $$\implies (\mathbf{A}/\theta)/(\psi/\theta) \cong\mathbf{A}/\psi \in IS(K)$$
    Hence $\psi/\theta \in \Spec^\varphi\mathbf{A}/\theta$.
\end{proof}

As we see, this already gives us a huge class of coherent conditions, one uniquely for each class of algebras closed under subalgebras and isomorphisms, in fact.
Surprisingly, these are \textit{the only} coherent conditions which exist.
In other words, every coherent condition is $\Phi_V(K)$ for some class $K$.
To show this we introduce the fundamental class of a coherent condition.

\begin{definition}
    Let $\varphi$ be a coherent condition on $V$.
    The \textit{fundamental class} of $\varphi$, denoted $F(\varphi)$, is the class of algebras $\mathbf{A} \in V$ such that $\Delta_A \in \Spec^\varphi\mathbf{A}$, so the minimal congruence (diagonal) is $\varphi$-distinguished.
\end{definition}

For example, for the coherent condition of prime ideals on commutative rings, its fundamental class is the class of integral domains.

\begin{proposition}
    The fundamental class is closed under isomorphisms and subalgebras.
\end{proposition}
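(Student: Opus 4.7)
The plan is to apply property (1) of coherent conditions (pullback of distinguished congruences along homomorphisms) to the canonical inclusion for subalgebras and to the inverse isomorphism for isomorphism closure. The key observation is that for any injection $i \colon \mathbf{B} \hookrightarrow \mathbf{A}$, the preimage of the diagonal is the diagonal: $i^{-1}(\Delta_A) = \Delta_B$.

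First I would handle subalgebras. Suppose $\mathbf{A} \in F(\varphi)$ and $\mathbf{B}$ is a subalgebra of $\mathbf{A}$, with inclusion homomorphism $i \colon \mathbf{B} \hookrightarrow \mathbf{A}$. Since $\Delta_A \in \Spec^\varphi \mathbf{A}$, property (1) of a coherent condition applied to $i$ gives $i^{-1}(\Delta_A) \in \Spec^\varphi \mathbf{B}$. Since $i$ is injective, $i^{-1}(\Delta_A) = \Delta_B$, so $\mathbf{B} \in F(\varphi)$.

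Next, for isomorphism closure, suppose $h \colon \mathbf{A} \to \mathbf{B}$ is an isomorphism with $\mathbf{A} \in F(\varphi)$. Applying property (1) to $h^{-1} \colon \mathbf{B} \to \mathbf{A}$, we obtain $(h^{-1})^{-1}(\Delta_A) \in \Spec^\varphi \mathbf{B}$; since $h^{-1}$ is a bijection this preimage equals $h(\Delta_A) = \Delta_B$, so $\mathbf{B} \in F(\varphi)$. Alternatively, one can note that any isomorphism is in particular an embedding, so this is already a special case of the subalgebra argument up to the isomorphism identifying $\mathbf{A}$ with its image in $\mathbf{B}$.

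There is essentially no obstacle here: the proof is almost immediate once one notices that the diagonal pulls back to the diagonal under any injection. The only thing worth being slightly careful about is that the definition of $\Spec^\varphi$ is given only on algebras in $V$, but since $V$ is a variety it is already closed under subalgebras and isomorphisms, so the spectra involved are well-defined.
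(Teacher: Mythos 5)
Your proof is correct and uses the same core idea as the paper: apply coherence property (1) to an injection and observe that the diagonal pulls back to the diagonal. The paper just streamlines the case split by noting up front (via the first isomorphism theorem) that closure under isomorphisms and subalgebras together is equivalent to closure under embeddings, then runs the single embedding argument you give.
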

\begin{proof}
    By the first isomorphism theorem, this is equivalent to the fundamental class being closed under embeddings.
    As such, let $\varphi$ be a coherent condition and $\mathbf{A} \in F(\varphi)$, and consider an embedding $i \colon \mathbf{B} \rightarrow \mathbf{A}$.
    By definition, $\Delta_A$ is $\varphi$-distinguished.
    $$\implies \Delta_B = i^{-1}(\Delta_A) \in \Spec^\varphi\mathbf{A}$$
    $$\implies \mathbf{B} \in F(\varphi)$$
    and we are done.
\end{proof}

There is another, also fairly useful, characterisation of the fundamental class of a coherent condition.

\begin{lemma} \label{lem:altfundclassdef}
    Let $\varphi$ be a coherent condition on $V$.
    Then $F(\varphi)$ is also all the algebras isomorphic to $\mathbf{A}/\theta$ for some $\mathbf{A} \in V$ and $\theta \in \Spec^\varphi\mathbf{A}$. 
\end{lemma}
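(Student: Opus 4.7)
The plan is to split the statement into two set-theoretic inclusions. For the forward direction, I would take any $\mathbf{B} \in F(\varphi)$ and observe that $\mathbf{B} \cong \mathbf{B}/\Delta_B$, where by the very definition of the fundamental class $\Delta_B \in \Spec^\varphi\mathbf{B}$. So $\mathbf{B}$ itself serves as the witnessing $\mathbf{A}$ with $\theta = \Delta_B$, and there is nothing more to check.

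For the reverse inclusion, I would pick an arbitrary $\mathbf{A} \in V$ together with $\theta \in \Spec^\varphi\mathbf{A}$ and try to prove $\mathbf{A}/\theta \in F(\varphi)$. The natural move is to invoke axiom (2) of a coherent condition with $\psi = \theta$ and the sub-congruence also taken to be $\theta$ itself; the inclusion $\theta \subseteq \theta$ is trivial, so this immediately yields $\theta/\theta \in \Spec^\varphi(\mathbf{A}/\theta)$. The key (but straightforward) observation is that $\theta/\theta$ is precisely the diagonal $\Delta_{A/\theta}$, because $\langle a/\theta, b/\theta\rangle \in \theta/\theta$ iff $\langle a, b\rangle \in \theta$ iff $a/\theta = b/\theta$. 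Hence $\mathbf{A}/\theta$ lies in $F(\varphi)$, and then by the previously established closure of $F(\varphi)$ under isomorphisms, every algebra isomorphic to such a quotient sits in $F(\varphi)$ as well.

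I do not foresee a genuine obstacle here: the main content of the lemma is really just a convenient reformulation packaging together axiom (2) of a coherent condition with the isomorphism-closure of $F(\varphi)$. The only place where one has to pause at all is the identification $\theta/\theta = \Delta_{A/\theta}$, and that is purely a matter of unwinding the definition of the quotient congruence.
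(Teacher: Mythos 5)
Your proof is correct and follows essentially the same route as the paper: the paper's one-line proof cites Proposition \ref{prop:natprojisemb}, which itself packages axiom (2) of a coherent condition together with the identification $\theta/\theta = \Delta_{A/\theta}$, and you have simply unfolded that reference. Both arguments reduce to the observation that $\theta$ is $\varphi$-distinguished in $\mathbf{A}$ iff $\Delta_{A/\theta}$ is $\varphi$-distinguished in $\mathbf{A}/\theta$, combined with the closure of $F(\varphi)$ under isomorphism.
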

\begin{proof}
    Straightforward from Proposition \ref{prop:natprojisemb}: $\theta\in \operatorname{Con}\mathbf{A}$ is $\varphi$-distinguished iff $\Delta_{A/\theta} = \theta/\theta$ is.
\end{proof}

We are now ready to prove the correspondence between coherent conditions on $V$ and classes closed under embeddings.

\begin{theorem} \label{theo:coherency}
    For any coherent condition $\varphi$, it is the case that $\Phi_V(F(\varphi)) = \varphi$, and for any class $K$ contained in $V$ we have $F(\Phi_V(K)) = IS(K)$.
\end{theorem}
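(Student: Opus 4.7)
The plan is to prove the two equalities separately, tackling $F(\Phi_V(K)) = IS(K)$ first since it is essentially a bookkeeping exercise, and then the more substantive $\Phi_V(F(\varphi)) = \varphi$.

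For the equality $F(\Phi_V(K)) = IS(K)$, unwind the definitions: $\mathbf{A} \in F(\Phi_V(K))$ means $\Delta_A \in \Spec^{\Phi_V(K)}\mathbf{A}$, which by the definition of $\Phi_V(K)$ means $\mathbf{A}/\Delta_A \in IS(K)$. Since $\mathbf{A}/\Delta_A \cong \mathbf{A}$, this is simply $\mathbf{A} \in IS(K)$. So this direction requires no real work.

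For $\Phi_V(F(\varphi)) = \varphi$, I would fix an algebra $\mathbf{A} \in V$ and show both inclusions of the $\varphi$-spectra. For the inclusion $\Spec^\varphi\mathbf{A} \subseteq \Spec^{\Phi_V(F(\varphi))}\mathbf{A}$, take $\theta \in \Spec^\varphi\mathbf{A}$; Lemma \ref{lem:altfundclassdef} gives $\mathbf{A}/\theta \in F(\varphi)$, hence trivially $\mathbf{A}/\theta \in IS(F(\varphi))$, which is exactly the defining condition for $\theta \in \Spec^{\Phi_V(F(\varphi))}\mathbf{A}$.

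The reverse inclusion is the main content. Given $\theta \in \Spec^{\Phi_V(F(\varphi))}\mathbf{A}$, by definition there exists an embedding $i \colon \mathbf{A}/\theta \hookrightarrow \mathbf{B}$ with $\mathbf{B} \in F(\varphi)$. Since $\mathbf{B} \in F(\varphi)$, the diagonal $\Delta_B$ is $\varphi$-distinguished; applying property (1) of coherent conditions to $i$ and noting $i^{-1}(\Delta_B) = \Delta_{A/\theta}$ (since $i$ is injective) yields $\Delta_{A/\theta} \in \Spec^\varphi\mathbf{A}/\theta$. Finally, Proposition \ref{prop:natprojisemb} applied to the natural projection $\pi_\theta \colon \mathbf{A} \rightarrow \mathbf{A}/\theta$ gives $\theta = \pi_\theta^{-1}(\Delta_{A/\theta}) \in \Spec^\varphi\mathbf{A}$, finishing the argument. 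The only conceptual hurdle is recognising that one needs to chain the two axioms of a coherent condition in opposite directions: axiom (1) to pull the diagonal of $\mathbf{B}$ back through the embedding, and axiom (2) in its packaged form (Proposition \ref{prop:natprojisemb}) to lift from $\mathbf{A}/\theta$ back up to $\mathbf{A}$.
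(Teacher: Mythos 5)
Your proof is correct and follows essentially the same path as the paper's. The only cosmetic difference is that for the reverse inclusion $\Spec^{\Phi_V(F(\varphi))}\mathbf{A} \subseteq \Spec^\varphi\mathbf{A}$ you re-derive inline that the fundamental class is closed under embeddings (pulling $\Delta_B$ back through $i$ via axiom (1)), whereas the paper absorbs this into the equality $F(\varphi) = IS(F(\varphi))$ and presents a single biconditional chain; both rest on the same two ingredients, Proposition~\ref{prop:natprojisemb} and Lemma~\ref{lem:altfundclassdef}.
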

\begin{proof}
    Let $\varphi$ be a coherent condition on $V$, and consider the other coherent condition $\varphi'= \Phi_V(F(\varphi))$.
    Let $\theta \in \operatorname{Con}\mathbf{A}$.
    Notice, by Lemma \ref{lem:altfundclassdef}:
    \begin{align*}
        \theta \in \Spec^\varphi\mathbf{A} &\iff \mathbf{A}/\theta \in F(\varphi) =IS(F(\varphi))\\
        &\iff \theta \in \Spec^{\varphi'}\mathbf{A}
    \end{align*}
    Indeed, $\varphi = \varphi'$.\par
    Consider now a class $K$ contained in $V$.
    Clearly $IS(K) \subseteq F(\Phi_V(K))$, as obviously the identity map maps a member of $IS(K)$ into $IS(K)$ and has kernel $\Delta_A$.
    Conversely, suppose $\mathbf{A}\in F(\Phi_V(K))$
    $$\implies \mathbf{A} \cong \mathbf{A}/\Delta_A \in IS(K)$$
    Thus $F(\Phi_V(K)) \subseteq IS(K)$, and we are done.
\end{proof}

The universal algebraic geometry developed in \cite{plotkin2002sevenlecturesuniversalalgebraic}, can be reduced to looking at coherent conditions where the fundamental class is $IS(\mathbf{A})$.
We will see this in section 3.2.

\subsection{$\varphi$-radical congruences}

Theorem \ref{theo:coherency} suggests that the study of coherent conditions is equivalent to the study of classes closed under embeddings.
I.e., we can study a coherent condition by only looking at its fundamental class.\par
In this subsection we shall give an important example of this idea, in the form of $\varphi$-radical congruences.

\begin{definition}
    Let $\varphi$ be a coherent condition on $V$.
    We define the set of $\varphi$-radicals of an algebra $\mathbf{A}$, which we denote $\RSpec^\varphi\mathbf{A}$, to be the meet-completion of $\Spec^\varphi\mathbf{A}$.
    In other words, the $\varphi$-radical congruences of $\mathbf{A}$ are arbitrary intersections of $\varphi$-distinguished congruences.
\end{definition}

Because it is the meet completion, we can, similarly to prime spectra, define the $\varphi$-radical of some congruence $\theta$, and consequently the $\varphi$-nilradical of an algebra.

\begin{definition}
    Let $\varphi$ be a coherent condition on $V$, and $\mathbf{A} \in V$ and $\theta$ a congruence on $\mathbf{A}$.
    We define the following two congruences:
    $$\rad^\varphi\theta := \bigcap \{ \psi \in \Spec^\varphi\mathbf{A} \mid \theta \subseteq \psi\}$$
    $$\nil^\varphi\mathbf{A} := \rad^\varphi\Delta_A$$
    $\Delta_A$ being the minimal congruence.
    The quotient 
    $$\operatorname{red}^\varphi\mathbf{A} := \mathbf{A} / \nil^\varphi\mathbf{A} = \mathbf{A} / \bigcap\Spec^\varphi\mathbf{A}$$
    will be called the $\varphi$-\textit{reduction} of $\mathbf{A}$, and algebras where the nilradical is trivial will be called $\varphi$-\textit{reduced}
\end{definition}

We collect a couple of immediate facts from the definition.

\begin{proposition} \label{prop:radfacts}
    Let $\varphi$ be a coherent condition on $V$ and $\mathbf{A} \in V$.
    It follows that
    \begin{enumerate}
        \item For $\psi\in \Spec^\varphi\mathbf{A}$ and $\theta$ a congruence: $\theta \subseteq \psi \iff \rad^\varphi\theta \subseteq \psi$
        \item $\rad^\varphi$ is a closure operator on the congruence lattice of $\mathbf{A}$
        \item The set of closed elements is precisely $\RSpec^\varphi\mathbf{A}$
    \end{enumerate}
\end{proposition}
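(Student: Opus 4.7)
The plan is to prove the three items in order, with (1) doing the real work and (2) and (3) following almost formally from it. Throughout I adopt the usual convention that an empty intersection in $\operatorname{Con}\mathbf{A}$ is the top congruence $\nabla_A$, which will let me avoid case splits on whether any $\varphi$-distinguished congruence lies above $\theta$ at all.

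For (1), I would unpack the definition of $\rad^\varphi\theta$. The forward direction is immediate: if $\psi \in \Spec^\varphi\mathbf{A}$ with $\theta \subseteq \psi$, then $\psi$ is one of the congruences whose intersection defines $\rad^\varphi\theta$, so $\rad^\varphi\theta \subseteq \psi$. For the converse it suffices to observe that $\theta \subseteq \rad^\varphi\theta$, since every congruence appearing in the defining intersection contains $\theta$ (vacuously in the empty case, where $\rad^\varphi\theta = \nabla_A$). Then $\rad^\varphi\theta \subseteq \psi$ chains with $\theta \subseteq \rad^\varphi\theta$ to give $\theta \subseteq \psi$.

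For (2), I have extensivity from the second half of (1). Monotonicity is a direct computation: if $\theta_1 \subseteq \theta_2$ then $\{\psi \in \Spec^\varphi\mathbf{A} \mid \theta_2 \subseteq \psi\} \subseteq \{\psi \in \Spec^\varphi\mathbf{A} \mid \theta_1 \subseteq \psi\}$, and taking intersections reverses the inclusion to give $\rad^\varphi\theta_1 \subseteq \rad^\varphi\theta_2$. Idempotence is where (1) earns its keep: by (1), a $\varphi$-distinguished $\psi$ contains $\theta$ if and only if it contains $\rad^\varphi\theta$, so the two defining sets coincide, giving $\rad^\varphi\rad^\varphi\theta = \rad^\varphi\theta$.

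For (3), the closed elements of any closure operator are precisely its fixed points. If $\rad^\varphi\theta = \theta$ then $\theta$ is, by definition, an intersection of $\varphi$-distinguished congruences, hence lies in $\RSpec^\varphi\mathbf{A}$. Conversely, for $\theta = \bigcap_{i \in I}\psi_i$ with each $\psi_i$ distinguished, each $\psi_i$ contains $\theta$ and therefore appears in the defining intersection of $\rad^\varphi\theta$, whence $\rad^\varphi\theta \subseteq \psi_i$ for every $i$, so $\rad^\varphi\theta \subseteq \theta$; combined with extensivity from (2) this gives equality. There is no serious obstacle here; the only subtlety worth flagging is the empty-intersection convention in the congruence lattice, which is handled uniformly by the fact that $\operatorname{Con}\mathbf{A}$ is a complete lattice.
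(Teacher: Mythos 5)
Your proof is correct and follows essentially the same route as the paper's: the Galois-connection-style fact in (1), extensivity and monotonicity by direct inspection of the defining intersection, and the identification of $\RSpec^\varphi\mathbf{A}$ with the fixed points. The only organizational difference is minor: you derive idempotence as a one-line corollary of (1) (the two defining sets coincide) and then handle (3) separately, whereas the paper bundles idempotence into the proof of (3) by observing that $\rad^\varphi\theta$ always lies in $\RSpec^\varphi\mathbf{A}$ and every such element is a fixed point.
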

\begin{proof}
    $1)$ is a straightforward property of intersection.\par
    For $2)$, the extensive property is clear.
    To show that it is also increasing, consider $\theta_1 \subseteq \theta_2$.
    Thus for $\psi$ $\varphi$-distinguished, $\theta_2\subseteq \psi \implies \theta_1\subseteq \psi$, so
    $$\bigcap \{ \psi \in \Spec^\varphi\mathbf{A} \mid \theta_1 \subseteq \psi \} \subseteq \bigcap \{ \psi \in \Spec^\varphi\mathbf{A} \mid \theta_2 \subseteq \psi \}$$
    as desired.
    Idempotency and $3)$ will be done together, as $\rad^\varphi\theta \in \RSpec^\varphi$.
    Thus, consider an arbitrary element $\bigcap_{i \in I}\psi_i \in \RSpec^\varphi\mathbf{A}$ for $\psi_i \in \Spec^\varphi\mathbf{A}$.
    $$\implies \bigcap_{i \in I} \psi_i \subseteq \psi_j \; \forall j \in I$$
    $$\implies \rad^\varphi \bigcap_{i \in I} \psi_i \subseteq \bigcap_{i \in I} \psi_i$$
    But any congruence must also be contained in its $\varphi$-radical, so $\bigcap_{i \in I} \psi_i$ is its own $\varphi$-radical.
    Indeed, $\RSpec^\varphi\mathbf{A}$ is the set of $\varphi$-radical congruences, and $\rad^\varphi$ is idempotent, making it a closure operator.
\end{proof}

Element $1)$ in the proposition above can be seen as one direction of some generalised Nullstellensatz, and in the following subsection it will be fully presented as such.\par
Now we will focus on proving that the sets $\RSpec^\varphi\mathbf{A}$ define a coherent condition on $V$, and the rest of this subsection shall be devoted to looking at the properties of that coherent condition.
First we have a useful lemma.

\begin{lemma} \label{lem:radicalcommuteswithquot}
    Let $\varphi$ be a coherent condition on $V$, then for all congruences $\psi\subseteq\theta$ of $\mathbf{A} \in V$ we have the equality
    $$(\rad^\varphi\theta)/\psi = \rad^\varphi\theta/\psi$$
\end{lemma}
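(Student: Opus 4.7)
The plan is to unwind both sides using the definition of $\rad^\varphi$ as an intersection and apply the correspondence set up in Proposition \ref{prop:natprojisemb}. The identity should reduce to the statement that taking the quotient by $\psi$ commutes with intersection when every intersectand contains $\psi$.

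First I would check the well-definedness of $(\rad^\varphi\theta)/\psi$: since $\psi\subseteq\theta\subseteq\rad^\varphi\theta$, the quotient congruence $(\rad^\varphi\theta)/\psi$ on $\mathbf{A}/\psi$ is defined in the usual sense of the correspondence theorem. I would then note that for any $\chi\in\Spec^\varphi\mathbf{A}$ with $\theta\subseteq\chi$, automatically $\psi\subseteq\chi$, so $\chi/\psi$ makes sense and is $\varphi$-distinguished on $\mathbf{A}/\psi$ by coherence condition (2). Conversely, Proposition \ref{prop:natprojisemb} gives that every $\chi'\in\Spec^\varphi(\mathbf{A}/\psi)$ arises uniquely as $\pi_\psi^{-1}(\chi')/\psi$ with $\pi_\psi^{-1}(\chi')\in\Spec^\varphi\mathbf{A}$ containing $\psi$. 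Under this bijection, $\theta/\psi\subseteq\chi'$ corresponds exactly to $\theta\subseteq \pi_\psi^{-1}(\chi')$.

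Using these bijections, both sides of the equality become intersections indexed by the same set of $\varphi$-distinguished congruences $\chi\supseteq\theta$ of $\mathbf{A}$. Specifically,
\[
\rad^\varphi(\theta/\psi) \;=\; \bigcap\{\chi/\psi \mid \chi\in\Spec^\varphi\mathbf{A},\ \theta\subseteq\chi\},
\]
while by definition
\[
(\rad^\varphi\theta)/\psi \;=\; \Bigl(\bigcap\{\chi\in\Spec^\varphi\mathbf{A}\mid \theta\subseteq\chi\}\Bigr)\Big/\psi.
\]
So what remains is the routine fact that the quotient-by-$\psi$ map commutes with arbitrary intersections of congruences each containing $\psi$, i.e.\ $\bigl(\bigcap_i\chi_i\bigr)/\psi = \bigcap_i(\chi_i/\psi)$ whenever $\psi\subseteq\chi_i$ for all $i$. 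This follows immediately from the definition $\chi/\psi = \{\langle a/\psi,b/\psi\rangle \mid \langle a,b\rangle\in\chi\}$.

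The only mild obstacle is bookkeeping: making sure that the inclusion $\psi\subseteq\chi$ is not an extra constraint on the right-hand side but is automatic from $\theta\subseteq\chi$ and $\psi\subseteq\theta$, so that the two index sets really are the same. Once that is observed, the two intersections coincide via the correspondence, and the equality follows.
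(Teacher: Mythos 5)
Your proposal is correct and follows essentially the same route as the paper: both unfold $\rad^\varphi$ as an intersection, invoke Proposition~\ref{prop:natprojisemb} to identify distinguished congruences of $\mathbf{A}/\psi$ with distinguished congruences of $\mathbf{A}$ containing $\psi$, and use that $\gamma\mapsto\gamma/\psi$ commutes with intersections while matching up the index sets via the correspondence theorem.
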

\begin{proof}
    Recall by Proposition \ref{prop:natprojisemb} that for a congruence $\psi$ of $\mathbf{A}$ the map
    $$\theta \mapsto \theta/\psi$$
    defines a correspondence between $\Spec^\varphi\mathbf{A}/\psi$ and the points in $\Spec^\varphi\mathbf{A}$ containing $\psi$.
    It is an elementary fact that this map respects intersections, so we see for any congruence $\theta \supseteq \psi$:
    \begin{align*}
        (\rad^\varphi\theta)/\psi &= (\bigcap \{ \gamma \in \Spec^\varphi\mathbf{A} \mid \theta \subseteq \gamma \}) / \psi\\
        &= \bigcap \{ \gamma/\psi \in \Spec^\varphi\mathbf{A}/\psi \mid \theta \subseteq \gamma \}\\
        &= \bigcap \{ \gamma/\psi \in \Spec^\varphi\mathbf{A}/\psi \mid \theta/\psi \subseteq \gamma/\psi \} = \rad^\varphi\theta/\psi
    \end{align*}
    the second to last equality making use of the correspondence theorem.
\end{proof}

A particular case of this result yields a nice formula for the $\varphi$-radical in terms of some nilradical.

\begin{corollary}
    $\rad^\varphi\theta = \pi_\theta^{-1}(\nil^\varphi\mathbf{A}/\theta)$,
    $\pi_\theta$ being the usual projection onto $\mathbf{A}/\theta$
\end{corollary}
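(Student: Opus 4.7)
The plan is to reduce the corollary directly to Lemma \ref{lem:radicalcommuteswithquot} by specialising $\psi$ to be $\theta$ itself. First I would apply that lemma with the two congruences both taken to be $\theta$, which is legal since $\theta \subseteq \theta$. This yields the equality
$$(\rad^\varphi\theta)/\theta = \rad^\varphi(\theta/\theta).$$
Next, I would observe that $\theta/\theta$ is precisely $\Delta_{A/\theta}$, the minimal congruence of $\mathbf{A}/\theta$, so the right-hand side is by definition $\nil^\varphi\mathbf{A}/\theta$.

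To finish, I would apply $\pi_\theta^{-1}$ to both sides. By Proposition \ref{prop:natprojisemb}, the assignment $\gamma \mapsto \gamma/\theta$ is a bijection between the congruences of $\mathbf{A}$ containing $\theta$ and the congruences of $\mathbf{A}/\theta$, with inverse given by $\pi_\theta^{-1}$. Since $\rad^\varphi\theta$ contains $\theta$ (as $\rad^\varphi$ is a closure operator by Proposition \ref{prop:radfacts}), we have $\pi_\theta^{-1}((\rad^\varphi\theta)/\theta) = \rad^\varphi\theta$, and combining with the previous equality gives the desired identity.

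Honestly, there is no real obstacle here; the result is essentially a one-line unwinding of Lemma \ref{lem:radicalcommuteswithquot}. The only point requiring minimal care is to make explicit that $\pi_\theta^{-1}$ and the quotient map $\gamma \mapsto \gamma/\theta$ are mutually inverse on the relevant interval of $\operatorname{Con}\mathbf{A}$, so that taking the $\pi_\theta^{-1}$ of $(\rad^\varphi\theta)/\theta$ recovers $\rad^\varphi\theta$ rather than some strictly smaller congruence.
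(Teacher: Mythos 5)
Your proof is correct and follows the same approach as the paper: specialise Lemma \ref{lem:radicalcommuteswithquot} to $\psi=\theta$, identify $\theta/\theta$ with $\Delta_{A/\theta}$, and pull back along $\pi_\theta^{-1}$ using the correspondence theorem. You have simply been a bit more explicit than the paper about why $\pi_\theta^{-1}\bigl((\rad^\varphi\theta)/\theta\bigr) = \rad^\varphi\theta$, which relies on $\rad^\varphi\theta \supseteq \theta$; the paper treats this as immediate.
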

\begin{proof}
    By Lemma \ref{lem:radicalcommuteswithquot}:
    $$\rad^\varphi\theta = \pi_\theta^{-1}(\rad^\varphi\theta/\theta) = \pi_\theta^{-1}(\nil^\varphi\mathbf{A}/\theta)$$
    as $\theta/\theta = \Delta_{A/\theta}$
\end{proof}

This is enough to prove the theorem.

\begin{theorem} \label{theo:radiscc}
    For any coherent condition $\varphi$ on $V$ there is a coherent condition $\sqrt{\varphi}$ on $V$ such that $\Spec^{\sqrt{\varphi}}\mathbf{A} = \RSpec^\varphi\mathbf{A}$.
\end{theorem}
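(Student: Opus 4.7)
The plan is to verify directly that the assignment $\mathbf{A} \mapsto \RSpec^\varphi\mathbf{A}$ satisfies the two axioms in the definition of a coherent condition, leveraging that a $\varphi$-radical congruence is by construction an intersection of $\varphi$-distinguished ones, together with the fact that both taking preimages under a homomorphism and quotienting by a smaller congruence are operations that commute with arbitrary intersections.

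For axiom (1), I would take a homomorphism $h\colon \mathbf{A}\to\mathbf{B}$ and a $\sqrt{\varphi}$-distinguished $\psi \in \RSpec^\varphi\mathbf{B}$, write it as $\psi = \bigcap_{i\in I}\psi_i$ with each $\psi_i \in \Spec^\varphi\mathbf{B}$, and use that preimages preserve intersections to get $h^{-1}(\psi) = \bigcap_{i\in I} h^{-1}(\psi_i)$. Each $h^{-1}(\psi_i)$ is $\varphi$-distinguished by axiom (1) for $\varphi$, so $h^{-1}(\psi)$ is again an intersection of $\varphi$-distinguished congruences and hence $\sqrt{\varphi}$-distinguished.

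For axiom (2), the cleanest route is to invoke Lemma \ref{lem:radicalcommuteswithquot}: given $\psi \in \RSpec^\varphi\mathbf{A}$ with $\theta\subseteq\psi$, Proposition \ref{prop:radfacts}(3) tells us that $\rad^\varphi\psi = \psi$, so the lemma yields
\[
\psi/\theta = (\rad^\varphi\psi)/\theta = \rad^\varphi(\psi/\theta),
\]
which by the same proposition means $\psi/\theta \in \RSpec^\varphi\mathbf{A}/\theta$. Alternatively, one can argue directly: writing $\psi = \bigcap_{i\in I}\psi_i$ with $\psi_i \in \Spec^\varphi\mathbf{A}$, the containment $\theta\subseteq\psi\subseteq\psi_i$ lets axiom (2) for $\varphi$ produce $\psi_i/\theta \in \Spec^\varphi\mathbf{A}/\theta$, and intersecting gives $\psi/\theta$.

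I do not expect any genuine obstacle here: the hard work has already been absorbed into Lemma \ref{lem:radicalcommuteswithquot} and Proposition \ref{prop:radfacts}. The only small subtlety is the trivial observation that in $\psi = \bigcap\psi_i$ one automatically has $\theta\subseteq\psi_i$ for every $i$, which is what makes the correspondence-theorem style argument in the alternative route go through cleanly.
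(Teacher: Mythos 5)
Your proof is correct and follows essentially the same route as the paper: for axiom (1) you both write a $\varphi$-radical congruence as an intersection of $\varphi$-distinguished ones and use that preimages commute with arbitrary intersections, and for axiom (2) you both invoke Lemma~\ref{lem:radicalcommuteswithquot} together with Proposition~\ref{prop:radfacts}. Your alternative direct argument for axiom (2) merely re-derives the relevant special case of that lemma (that the correspondence-theorem bijection preserves intersections), so nothing genuinely different is happening there.
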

\begin{proof}
    It suffices, of course, to show that the sets $\RSpec^\varphi\mathbf{A}$ follow the coherence properties.\par
    As such, let $h\colon \mathbf{A} \rightarrow \mathbf{B}$ be a homomorphism, and let $\theta \in \RSpec^\varphi\mathbf{B}$.
    By Proposition \ref{prop:radfacts}:
    \begin{align*}
        h^{-1}(\theta) &= h^{-1}(\bigcap\{ \psi \in \Spec^\varphi\mathbf{B} \mid \theta \subseteq \psi \})\\
        &= \bigcap \{ h^{-1}(\psi) \in \Spec^\varphi\mathbf{A} \mid \theta \subseteq \psi \})
    \end{align*}
    But, again as per Proposition \ref{prop:radfacts}, this must be an element of $\RSpec^\varphi\mathbf{A}$, so it satisfies the first coherence property.\par
    To show that it satisfies the second, consider $\theta \subseteq \psi$ congruences of $\mathbf{A}$, with $\psi$ being $\varphi$-radical.
    By Lemma \ref{lem:radicalcommuteswithquot}:
    $$\psi/\theta = (\rad^\varphi\psi)/\theta = \rad^\varphi\psi/\theta \in \RSpec^\varphi\mathbf{A}/\theta$$
    and we are done.
\end{proof}

From now on we shall refer to this coherent condition as $\sqrt{\varphi}$, called its \textit{radical}.
By Theorem \ref{theo:coherency} we therefore have that the class of $\varphi$-reduced algebras is precisely $F(\sqrt{\varphi})$, hence closed under embeddings, and also that $\theta$ is $\varphi$-radical if and only if $\mathbf{A}/\theta$ is reduced.
This also makes sense of the reduction of $\mathbf{A}$, as clearly $F(\sqrt{\varphi})$ is the class of algebras isomorphic to the reduction of some algebra in $V$.\par
It is natural to ask whether $F(\sqrt{\varphi})$ has anything to do with $F(\varphi)$ or more generally with $K$, in the case that $\varphi = \Phi_V(K)$.
We will answer this in the positive, and even show that the fundamental class simply encodes the property of a coherent condition to be the radical of some other.

\begin{theorem} \label{theo:radfundclassisprevar}
    Let $\varphi = \Phi_V(K)$ be a coherent condition.
    Then $F(\sqrt{\varphi}) = ISP(K)$.
    In other words, $\sqrt{\varphi} = \Phi_V(ISP(K))$, and so $\sqrt{\sqrt{\varphi}} = \sqrt{\varphi}$.
\end{theorem}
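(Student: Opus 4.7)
The plan is to prove the key equality $F(\sqrt{\varphi}) = ISP(K)$ directly by unwinding both sides in terms of embeddings into products, and then derive the two stated corollaries from Theorem~\ref{theo:coherency} and the closure-operator properties of $\rad^\varphi$ established in Proposition~\ref{prop:radfacts}.

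For the inclusion $F(\sqrt{\varphi}) \subseteq ISP(K)$, I would suppose $\mathbf{A} \in F(\sqrt{\varphi})$, so $\Delta_A$ is $\varphi$-radical, and write $\Delta_A = \bigcap_{i \in I}\psi_i$ for some family $\{\psi_i\}_{i \in I} \subseteq \Spec^\varphi\mathbf{A}$ (allowing $I = \emptyset$, which forces $\mathbf{A}$ trivial and places it in $ISP(K)$ by the empty-product convention). Since $\varphi = \Phi_V(K)$, each $\mathbf{A}/\psi_i$ admits an embedding $\iota_i$ into some $\mathbf{C}_i \in K$. The universal map $\mathbf{A} \to \prod_{i \in I}\mathbf{A}/\psi_i$ has kernel $\bigcap_i \psi_i = \Delta_A$ and is therefore injective, so composing with $\prod_i \iota_i$ yields an embedding $\mathbf{A} \hookrightarrow \prod_{i \in I}\mathbf{C}_i$. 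Conversely, given $\mathbf{A} \in ISP(K)$ with embedding $\iota \colon \mathbf{A} \hookrightarrow \prod_{i \in I}\mathbf{C}_i$, I would set $h_i := \pi_i \circ \iota$; then $\mathbf{A}/\ker h_i$ embeds into $\mathbf{C}_i \in K$, so $\ker h_i \in \Spec^\varphi\mathbf{A}$, and injectivity of $\iota$ gives $\bigcap_i \ker h_i = \Delta_A$, so $\Delta_A$ is $\varphi$-radical.

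Having established $F(\sqrt{\varphi}) = ISP(K)$, the identity $\sqrt{\varphi} = \Phi_V(ISP(K))$ is immediate from Theorem~\ref{theo:coherency} applied to $\sqrt{\varphi}$. The idempotency $\sqrt{\sqrt{\varphi}} = \sqrt{\varphi}$ then follows because $\RSpec^\varphi\mathbf{A}$ is by construction already closed under arbitrary intersection, so its own meet-completion is itself; hence $\Spec^{\sqrt{\sqrt{\varphi}}}\mathbf{A} = \RSpec^{\sqrt{\varphi}}\mathbf{A} = \RSpec^\varphi\mathbf{A} = \Spec^{\sqrt{\varphi}}\mathbf{A}$ for every $\mathbf{A} \in V$. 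No step looks like a serious obstacle; the only mild subtlety is the trivial/empty-product case, which is resolved cleanly by the conventions already fixed in the preliminaries.
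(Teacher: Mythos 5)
Your proof is correct and follows essentially the same route as the paper: embed $\mathbf{A}$ into a product of quotients by $\varphi$-distinguished congruences for one inclusion, and pull back kernels of projections for the other, with the empty-index case handled by the stated convention. The only cosmetic difference is that the paper reduces the reverse inclusion to $P(K) \subseteq F(\sqrt{\varphi})$ via closure of fundamental classes under $IS$, whereas you unwind $ISP(K)$ directly; the content is identical.
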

\begin{proof}
    We will show this by using a bi-inclusion.
    For the first direction, let $\mathbf{A} \in F(\sqrt{\varphi})$.
    Consider the natural map 
    $$\alpha \colon \mathbf{A} \rightarrow \prod_{\theta \in \Spec^\varphi\mathbf{A}}\mathbf{A}/\theta$$
    given by 
    $$a \mapsto (a/\theta)_{\theta \in \Spec^\varphi\mathbf{A}}$$
    This has kernel
    $$\bigcap\Spec^\varphi\mathbf{A} = \nil^\varphi\mathbf{A} = \Delta_A$$
    hence $\alpha$ is an embedding.
    Further, as $\mathbf{A}/\theta$ is in $IS(K)$ for all $\varphi$-distinguished $\theta$ we have
    $$\mathbf{A} \cong \operatorname{im}\alpha \leq \prod_{\theta \in \Spec^\varphi\mathbf{A}}\mathbf{A}/\theta \in ISP(K)$$
    $$\implies \mathbf{A} \in ISP(K)$$
    Hence $F(\sqrt{\varphi}) \subseteq ISP(K)$.\par
    Conversely, by Theorem \ref{theo:radiscc} it suffices to show that $F(\sqrt{\varphi})$ contains $P(K)$.
    Thus let $\mathbf{A}_i \in K$ be a family of algebras indexed by $I$, and define $\mathbf{A} = \prod_{i \in I}\mathbf{A}_i$.
    Let $\theta_i$ be the kernel of projection map $\pi_i \colon \mathbf{A} \rightarrow \mathbf{A}_i$.
    It should be clear that $\bigcap_{i \in I} \theta_i = \Delta_A$.
    But obviously $\theta_i \in \Spec^\varphi\mathbf{A}$, so
    $$\Delta_A = \bigcap_{i \in I} \theta_i \in \RSpec^\varphi\mathbf{A}$$
    Hence $P(K) \subseteq F(\sqrt{\varphi})$.\par
    That $\sqrt{\sqrt{\varphi}} = \sqrt{\varphi}$ is then translates to the fact that $ISP(ISP(K)) = ISP(K)$.
\end{proof}

\begin{corollary}
    $\varphi$ is the radical of some coherent condition if and only if $F(\varphi)$ is closed under arbitrary products if and only if $\Spec^\varphi\mathbf{A}$ is closed under arbitrary intersections for all $\mathbf{A}$.
\end{corollary}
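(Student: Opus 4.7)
The plan is to establish the cyclic implications $(i)\Rightarrow(ii)\Rightarrow(iii)\Rightarrow(i)$, where $(i)$ asserts $\varphi=\sqrt{\psi}$ for some $\psi$, $(ii)$ asserts $F(\varphi)$ is closed under arbitrary products, and $(iii)$ asserts each $\Spec^\varphi\mathbf{A}$ is closed under arbitrary intersections. The implications $(i)\Rightarrow(ii)$ and $(iii)\Rightarrow(i)$ fall out of results already in hand, so the substantive work is in $(ii)\Rightarrow(iii)$.

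For $(i)\Rightarrow(ii)$, I would start with $\varphi=\sqrt{\psi}$ and apply Theorem \ref{theo:coherency} to write $\psi=\Phi_V(F(\psi))$; then Theorem \ref{theo:radfundclassisprevar} immediately yields $F(\varphi)=ISP(F(\psi))$. Any class of the form $ISP(K)$ is closed under arbitrary products, since a product of algebras that each embed into a product of $K$-members embeds into a product of products of $K$-members, which is again a product of $K$-members. For $(iii)\Rightarrow(i)$, closure of $\Spec^\varphi\mathbf{A}$ under arbitrary intersection means $\Spec^\varphi\mathbf{A}$ coincides with its own meet completion $\RSpec^\varphi\mathbf{A}=\Spec^{\sqrt{\varphi}}\mathbf{A}$, so $\varphi=\sqrt{\varphi}$ is trivially a radical.

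The main step is $(ii)\Rightarrow(iii)$: fix $\mathbf{A}\in V$ and a family $(\theta_i)_{i\in I}$ of $\varphi$-distinguished congruences, and set $\theta=\bigcap_{i\in I}\theta_i$. I would reuse the product-embedding trick from the first half of Theorem \ref{theo:radfundclassisprevar}: the map
$$\mathbf{A}/\theta\longrightarrow\prod_{i\in I}\mathbf{A}/\theta_i,\qquad a/\theta\mapsto(a/\theta_i)_{i\in I}$$
is well-defined and injective because its kernel is exactly $\theta/\theta=\Delta_{A/\theta}$. By Lemma \ref{lem:altfundclassdef} each factor $\mathbf{A}/\theta_i$ belongs to $F(\varphi)$, so assumption $(ii)$ places the product in $F(\varphi)$, and closure of $F(\varphi)$ under subalgebras (shown earlier) forces $\mathbf{A}/\theta\in F(\varphi)$. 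Invoking Lemma \ref{lem:altfundclassdef} once more gives $\theta\in\Spec^\varphi\mathbf{A}$, as desired.

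The only conceptual obstacle I foresee is being careful that the diagonal map in $(ii)\Rightarrow(iii)$ is genuinely an embedding with the correct kernel; everything else is a bookkeeping combination of Theorems \ref{theo:coherency} and \ref{theo:radfundclassisprevar} together with the elementary closure properties of fundamental classes.
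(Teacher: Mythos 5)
Your proof is correct, and it follows essentially the same path the paper intends: the paper's own proof is simply the remark that everything falls out of Theorems \ref{theo:coherency} and \ref{theo:radfundclassisprevar}, which is exactly what your $(i)\Rightarrow(ii)$ and $(iii)\Rightarrow(i)$ steps do. Your $(ii)\Rightarrow(iii)$ step could have been shortened by observing that $F(\varphi)$ closed under $P$ gives $F(\varphi)=ISP(F(\varphi))=F(\sqrt{\varphi})$, whence $\varphi=\sqrt{\varphi}$ by injectivity of $F$, but re-running the product-embedding argument is equivalent in substance.
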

\begin{proof}
    These follow directly from the above theorem.
\end{proof}

As mentioned in Section 1.1, prevarieties are precisely the models of some set of preidentities, so the radical coherent conditions are precisely those for which the fundamental class is defined by preidentities.

The most well-known (nonvariety) case of this is perhaps for commutative rings where $K$ is the class of fields, and $ISP(K)$ is axiomised by countable family of quasi-identities:
$$\left\{ x^n \approx 0 \rightarrow x\approx 0 \mid n \in \mathbb{N} \right\}$$

\subsection{Zariski closure and topology}

In this subsection we will introduce the Zariski closure operator and subsequent topology (by topologisation) on $\Spec^\varphi\mathbf{A}$.

\begin{definition}
    Let $\varphi$ be a coherent condition and $\mathbf{A} \in V$.
    For $S \subseteq A^2$ we define the \textit{Zariski closed set} generated by $S$:
    $$V_\varphi(S) := \{ \theta \in \Spec^\varphi\mathbf{A} \mid S\subseteq \theta\}$$
\end{definition}

It should be clear that $V_\varphi(S) = V_\varphi(\Theta(S))$ (where 
 $\Theta$ is the congruence closure).
Furthermore, as per Proposition \ref{prop:radfacts}, we have $V_\varphi(\theta) = V_\varphi(\rad^\varphi\theta)$.\par
These closed sets yield a closure system on $\Spec^\varphi\mathbf{A}$.

\begin{proposition}
    For any collection $V_\varphi(S_i)$ indexed by $I$, we have
    $$\bigcap_{i \in I} V_\varphi(S_i) = V_\varphi(\bigcup_{i \in I} S_i)$$
    So $\Spec^\varphi\mathbf{A}$ is a closure system.
\end{proposition}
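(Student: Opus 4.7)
The plan is to unfold both sides directly from the definition of $V_\varphi$ and appeal only to the universal property of union in $\mathcal{P}(A^2)$. First I would fix $\theta \in \Spec^\varphi\mathbf{A}$ and observe that $\theta \in \bigcap_{i\in I} V_\varphi(S_i)$ is, by definition of intersection and of the Zariski closed set, the assertion that $S_i \subseteq \theta$ for every $i \in I$. This conjunction of containments is in turn equivalent to $\bigcup_{i \in I} S_i \subseteq \theta$, which together with $\theta \in \Spec^\varphi\mathbf{A}$ is precisely the condition $\theta \in V_\varphi(\bigcup_{i\in I} S_i)$. Running the same calculation in the reverse direction establishes the set equality.

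For the second clause, I would invoke the correspondence recorded in Section 1.2 between closure operators on a set and subsets of its powerset closed under arbitrary intersection. The identity just proved shows that the collection $\{V_\varphi(S) \mid S \subseteq A^2\}$ is closed under arbitrary intersection; the case $I = \varnothing$ in particular gives the whole spectrum $V_\varphi(\varnothing) = \Spec^\varphi\mathbf{A}$ as the top element. This collection therefore generates a closure operator on $\Spec^\varphi\mathbf{A}$, namely $S \mapsto V_\varphi(S)$ (modulo replacing $S \subseteq A^2$ with the actual intersection formula).

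There is no genuine obstacle; the only bookkeeping point is that the membership relation $\theta \in \Spec^\varphi\mathbf{A}$ is baked into every $V_\varphi(S_i)$, so restricting to that spectrum is harmless and the equality holds as subsets of $\Spec^\varphi\mathbf{A}$.
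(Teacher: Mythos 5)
Your proof is correct and follows essentially the same route as the paper: unfold $V_\varphi$ on both sides, reduce to the fact that $\bigcup_i S_i \subseteq \theta$ iff $S_i \subseteq \theta$ for all $i$, then invoke the Section 1.2 correspondence between closure operators and intersection-closed families (with the $I = \varnothing$ case supplying the top element, which the paper leaves implicit). The only nit is that the induced closure operator on $\Spec^\varphi\mathbf{A}$ is not literally $S \mapsto V_\varphi(S)$ (that map has domain $\mathcal{P}(A^2)$, not $\mathcal{P}(\Spec^\varphi\mathbf{A})$), but your parenthetical already flags this.
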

\begin{proof}
    Let $\theta \in \Spec^\varphi\mathbf{A}$.
    By definition 
    $$\bigcup_{i \in I} S_i \subseteq \theta \iff S_i\subseteq \theta \; \forall i \in I$$
    So
    $$\theta \in V_\varphi(\bigcup_{i \in I} S_i) \iff \theta \in \bigcap_{i \in I} V_\varphi(S_i)$$
    hence the Zariski closed sets are closed under arbitrary intersection, and $\Spec^\varphi\mathbf{A}$ is a closure system.
\end{proof}

\begin{definition}
    A subset $X \subseteq \Spec^\varphi\mathbf{A}$ is \textit{Zariski topologically closed} if it is closed in the topologisation of the Zariski closure system, called the Zariski topology.\par
    We denote the Zariski closure of $X$ with $X^\text{zc}$ and the topological closure $\overline{X}$.
\end{definition}

\begin{corollary} \label{cor:principalclosedsets}
    The Zariski closed sets $V_\varphi(a_0, a_1)$ for $a_0, a_1 \in A$ (by abuse of notation) form a prebasis, or subbasis of the Zariski topology on $\Spec^\varphi\mathbf{A}$.
\end{corollary}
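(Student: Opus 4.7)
The plan is to verify that the principal Zariski closed sets $V_\varphi(a_0, a_1)$, taken as a subcollection of the Zariski closed sets $\{V_\varphi(S) : S \subseteq A^2\}$, generate the same topology on $\Spec^\varphi \mathbf{A}$. Since the Zariski topology is by definition the topologisation of the closure system of all Zariski closed sets, and every principal $V_\varphi(a_0, a_1)$ is itself a Zariski closed set, the topology generated by the principal collection is automatically coarser than (or equal to) the Zariski topology. So only one nontrivial inclusion remains.

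For the reverse inclusion, the key step is to show that every Zariski closed set can be written as an arbitrary intersection of principal ones. Invoking the proposition just proved that $V_\varphi\bigl(\bigcup_{i \in I} S_i\bigr) = \bigcap_{i \in I} V_\varphi(S_i)$, and applying it to the tautological decomposition $S = \bigcup_{(a_0, a_1) \in S}\{(a_0, a_1)\}$, I would obtain
$$V_\varphi(S) = \bigcap_{(a_0, a_1) \in S} V_\varphi(a_0, a_1).$$
Because arbitrary intersections of closed sets are closed in any topology, this exhibits $V_\varphi(S)$ as a closed set in the topology generated by the principals. Hence that topology contains every Zariski closed set among its closed sets, and therefore refines the Zariski topology.

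Combining the two inclusions yields equality of topologies, establishing the corollary. I do not expect any real obstacle here: the argument is essentially a bookkeeping exercise on top of the already-proved identity relating $V_\varphi$ to intersection. The only mild care needed is keeping straight which direction (coarser versus finer) is paired with which inclusion of generating families, and noting that in the paper's convention the topologisation is defined directly in terms of a prebasis of \emph{closed} sets, so no dualisation to open sets is required.
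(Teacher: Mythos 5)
Your proof is correct and takes essentially the same route as the paper's: both decompose $V_\varphi(S) = \bigcap_{\langle a_0, a_1\rangle \in S} V_\varphi(a_0, a_1)$ using the just-proved intersection identity, and conclude that the topology generated by the principal closed sets coincides with the Zariski topology. You spell out the two-inclusion bookkeeping a bit more explicitly than the paper, but there is no substantive difference.
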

\begin{proof}
    Indeed, we have
    $$V_\varphi(S) = V_\varphi(\bigcup_{\langle a_0, a_1\rangle \in S} \langle a_0, a_1\rangle) = \bigcap_{\langle a_0, a_1\rangle \in S}V_\varphi(a_0, a_1)$$
    So the topology generated by taking the collection $V_\varphi(a_0, a_1)$ as prebasis includes every Zariski closed set, hence includes every Zariski topologically closed set, and so must be the Zariski topology.
\end{proof}

It is fruitful to make a distinction between Zariski closed- and topologically closed sets, as analogously to the usual algebraic geometry we have a notion of the Nullstellensatz.

\begin{definition}
    If $X \subseteq \Spec^\varphi\mathbf{A}$ we define
    $$\psi(X) := \bigcap X$$
    And for $S \subseteq A^2$ we say $\rad^\varphi S := \rad^\varphi \Theta(S)$
\end{definition}

\begin{theorem}
    [Generalised Nullstellensatz part I]
    Let $S \subseteq A^2$ and $X \subseteq \Spec^\varphi \mathbf{A}$, then
    $$S \subseteq \psi(X) \iff X \subseteq V_\varphi(S)$$
    So we have an antitone Galois connection.
    Furthermore:
    $$\psi(V_\varphi(S)) = \rad^\varphi S$$
    and
    $$V_\varphi(\psi(X)) = X^\text{zc}$$
\end{theorem}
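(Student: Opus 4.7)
The plan is to dispose of the Galois connection first and then deduce the two closure formulas essentially for free. The biadjointness is just a reformulation of definitions: $S \subseteq \psi(X) = \bigcap X$ means $S$ is contained in every $\theta \in X$, which is exactly the condition $\theta \in V_\varphi(S)$ for each such $\theta$, i.e.\ $X \subseteq V_\varphi(S)$. No coherent-condition machinery is needed here; only the definitions of $\psi$ and $V_\varphi$ together with the set-theoretic meaning of $\bigcap X$. From this I immediately get that both $V_\varphi \circ \psi$ and $\psi \circ V_\varphi$ are closure operators, and that their closed elements sit on opposite sides of the Galois correspondence.

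For the identity $\psi(V_\varphi(S)) = \rad^\varphi S$, I unfold both sides. On the left,
\[
\psi(V_\varphi(S)) = \bigcap \{\theta \in \Spec^\varphi \mathbf{A} \mid S \subseteq \theta\}.
\]
On the right, $\rad^\varphi S = \rad^\varphi \Theta(S) = \bigcap \{\theta \in \Spec^\varphi \mathbf{A} \mid \Theta(S) \subseteq \theta\}$. The key observation is that for a congruence $\theta$, $S \subseteq \theta$ iff $\Theta(S) \subseteq \theta$, because $\Theta(S)$ is the smallest congruence containing $S$. Hence the indexing sets coincide and the two intersections are equal.

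For the identity $V_\varphi(\psi(X)) = X^{\mathrm{zc}}$, I argue that $V_\varphi(\psi(X))$ is the smallest Zariski-closed set containing $X$. Containment $X \subseteq V_\varphi(\psi(X))$ is the unit of the Galois connection, applied with $S := \psi(X)$. The set $V_\varphi(\psi(X))$ is Zariski-closed by definition. Finally, if $V_\varphi(T)$ is any Zariski-closed superset of $X$, then the Galois connection gives $T \subseteq \psi(X)$, and antitonicity of $V_\varphi$ (immediate from its definition) yields $V_\varphi(\psi(X)) \subseteq V_\varphi(T)$. Thus $V_\varphi(\psi(X))$ is contained in every Zariski-closed set that contains $X$, which identifies it with $X^{\mathrm{zc}}$.

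I do not anticipate a genuine obstacle here: the theorem is the usual abstract Galois-connection-to-closure-operators argument, and the only piece specific to our setting is observing that $V_\varphi$ sees $S$ only through the congruence $\Theta(S)$ it generates. The one mild subtlety worth flagging in the write-up is that although $V_\varphi$ and $\psi$ were defined on different ambient sets ($A^2$ versus $\Spec^\varphi \mathbf{A}$), the inclusion relation on each side is meaningful and the Galois connection machinery applies verbatim.
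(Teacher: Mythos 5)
Your proof is correct and follows the same overall structure as the paper's: establish the antitone Galois connection, then derive the two closure identities from it. The only real differences are cosmetic—you obtain the biconditional in one clean chain of equivalences directly from the definitions of $\psi$ and $V_\varphi$, whereas the paper routes the reverse implication through $S \subseteq \rad^\varphi S = \bigcap V_\varphi(S) \subseteq \bigcap X$; and you spell out the ``smallest Zariski-closed superset'' deduction for $V_\varphi(\psi(X)) = X^{\mathrm{zc}}$, which the paper simply attributes to general Galois-connection theory.
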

\begin{proof}
    Let us begin with the antitone Galois connection.
    Suppose that $S \subseteq \psi(X)$, and let $\theta \in X$
    $$\implies S\subseteq \bigcap X \subseteq \theta$$
    $$\implies X \subseteq V_\varphi(S)$$
    For the reverse notice:
    $$X\subseteq V_\varphi(S) \implies S \subseteq \rad^\varphi S = \bigcap V_\varphi(S) \subseteq \bigcap X = \psi(X)$$
    As desired.\par
    The third and fourth item now follow from the fact that we have an antitone Galois connection.
\end{proof}

From this follows for example that $V_\varphi(-)$ and $\psi(-)$ are inclusion-reversing maps.
The following is a restatement of the last two items in the above theorem.

\begin{corollary}
    If $\varphi$ is a radical coherent condition, then $\psi(-)$ and $V_\varphi(-)$ give an antitone Galois-correspondence between $\varphi$-distinguished congruences and Zariski closed sets.
\end{corollary}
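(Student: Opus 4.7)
The plan is to invoke the standard fact that any antitone Galois connection restricts to an order-reversing bijection between the Galois-closed elements on each side, and then to identify those closed elements using the identities $\psi(V_\varphi(S)) = \rad^\varphi S$ and $V_\varphi(\psi(X)) = X^{\text{zc}}$ from the preceding theorem.

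First I would recall the general abstract nonsense: given any antitone Galois connection between powersets, say via maps $\alpha$ and $\beta$, the restrictions $\alpha|_{\operatorname{im}\beta}$ and $\beta|_{\operatorname{im}\alpha}$ are mutually inverse order-reversing bijections between $\operatorname{im}\alpha$ and $\operatorname{im}\beta$. Applied to our $(\psi, V_\varphi)$, the image of $V_\varphi$ consists exactly of the Zariski closed sets (these are the $X$ with $X = X^{\text{zc}}$), while the image of $\psi$ consists of those $S \subseteq A^2$ with $S = \rad^\varphi S$. Since $\rad^\varphi S$ is an intersection of congruences, any such fixed $S$ is automatically a congruence, and is by definition $\varphi$-radical.

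Next I would use the hypothesis that $\varphi$ is radical. By the corollary immediately following Theorem \ref{theo:radfundclassisprevar}, being radical is equivalent to $\Spec^\varphi \mathbf{A}$ being closed under arbitrary intersections, and hence to $\RSpec^\varphi \mathbf{A} = \Spec^\varphi \mathbf{A}$; that is, a congruence is $\varphi$-radical if and only if it is $\varphi$-distinguished. Substituting this equivalence into the identification above shows that the Galois-closed elements on the congruence side are precisely the $\varphi$-distinguished congruences, which is the statement we want. (As a sanity check: if $\theta$ is $\varphi$-distinguished then $V_\varphi(\theta)$ contains $\theta$ itself as its least element, so $\psi(V_\varphi(\theta)) = \bigcap V_\varphi(\theta) = \theta$ directly; conversely $\psi$ of a Zariski closed set is an intersection of $\varphi$-distinguished congruences and hence again $\varphi$-distinguished by the hypothesis.)

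There is no real obstacle here — the content is entirely in the preceding Nullstellensatz and in the characterisation of radical coherent conditions. The only minor subtlety is that the natural domain of the Galois connection is $\mathcal{P}(A^2)$ rather than $\Con \mathbf{A}$, but this is harmless because the operator $\rad^\varphi \circ \Theta$ collapses any subset of $A^2$ to a congruence before checking closure, so the closed elements on that side are automatically congruences.
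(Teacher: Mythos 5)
Your proposal is correct and matches the paper's intent: the paper explicitly frames this corollary as ``a restatement of the last two items in the above theorem,'' and you spell out exactly that restatement via the standard fact about Galois-closed elements together with the equivalence (from the corollary after Theorem \ref{theo:radfundclassisprevar}) that $\varphi$ radical means $\RSpec^\varphi\mathbf{A} = \Spec^\varphi\mathbf{A}$. No issues.
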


Following classical (universal) algebraic geometry, we can interpret $\mathbf{A}/\psi(X)$ as the `coordinate algebra' of the set of `points' $X$.
If $\varphi = \Phi_V(K)$, then Theorem \ref{theo:radfundclassisprevar} tells us that that class of (algebras isomorphic to) coordinate algebras of $\varphi$ is precisely $ISP(K)$.\par
The assignment $\mathbf{A} \mapsto \Spec^\varphi\mathbf{A}$ happens to be a contravariant functor to $\mathbf{Top}$.

\begin{proposition}
    A homomorphism $f \colon \mathbf{A} \rightarrow \mathbf{B}$ induces a closure morphism $f_\ast \colon \Spec^\varphi\mathbf{B} \rightarrow\Spec^\varphi\mathbf{A}$ given by the preimage.
    This yields a contravariant functor from $V$ to $\mathbf{Top}$.
\end{proposition}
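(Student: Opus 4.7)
The plan is to verify three things in sequence: that $f_\ast := f^{-1}$ actually lands in $\Spec^\varphi\mathbf{A}$, that it is a morphism of the Zariski closure systems, and that the assignment $\mathbf{A}\mapsto \Spec^\varphi\mathbf{A}$, $f\mapsto f_\ast$ is functorial and lifts through topologisation to land in $\mathbf{Top}$.

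First I would observe that $f_\ast$ is well-defined as a set map $\Spec^\varphi\mathbf{B}\to\Spec^\varphi\mathbf{A}$: this is precisely the first coherence axiom applied to $f$. The heart of the proof is then to show $f_\ast$ is a closure morphism, i.e.\ that the preimage (under $f_\ast$) of any Zariski closed set of $\Spec^\varphi\mathbf{A}$ is Zariski closed in $\Spec^\varphi\mathbf{B}$. To that end, fix $S\subseteq A^2$ and compute
\begin{align*}
    f_\ast^{-1}(V_\varphi(S)) &= \{\theta\in\Spec^\varphi\mathbf{B}\mid f^{-1}(\theta)\in V_\varphi(S)\}\\
    &= \{\theta\in\Spec^\varphi\mathbf{B}\mid S\subseteq f^{-1}(\theta)\}\\
    &= \{\theta\in\Spec^\varphi\mathbf{B}\mid (f\times f)(S)\subseteq \theta\}\\
    &= V_\varphi\bigl((f\times f)(S)\bigr),
\end{align*}
which is Zariski closed in $\Spec^\varphi\mathbf{B}$. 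So $f_\ast$ is a morphism in the category of closure systems.

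Next I would check functoriality. For $g\colon \mathbf{B}\to\mathbf{C}$, the elementary identity $(g\circ f)^{-1} = f^{-1}\circ g^{-1}$ on subsets of the codomain (applied congruence-wise) gives $(g\circ f)_\ast = f_\ast\circ g_\ast$, and clearly $(\mathrm{id}_\mathbf{A})_\ast = \mathrm{id}_{\Spec^\varphi\mathbf{A}}$. Hence $\Spec^\varphi$ is a contravariant functor from $V$ to the category of closure systems. Finally, applying Proposition \ref{prop:topisendfunc}, each $f_\ast$ is continuous as a map $(\Spec^\varphi\mathbf{B})^{\mathrm{top}}\to(\Spec^\varphi\mathbf{A})^{\mathrm{top}}$, and since topologisation is itself a functor this composes to give the contravariant functor $V\to\mathbf{Top}$.

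I do not expect a real obstacle; the only non-formal content is the displayed calculation $f_\ast^{-1}(V_\varphi(S)) = V_\varphi((f\times f)(S))$, which is a routine chase through definitions. Everything else is bookkeeping about preimages and an appeal to Proposition \ref{prop:topisendfunc}.
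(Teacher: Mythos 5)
Your proof is correct and follows essentially the same path as the paper: both establish that $f_\ast$ lands in $\Spec^\varphi\mathbf{A}$ by the first coherence axiom and then reduce the closure-morphism check to the identity $f_\ast^{-1}(V_\varphi(S)) = V_\varphi\bigl((f\times f)(S)\bigr)$, before appealing to Proposition~\ref{prop:topisendfunc} to pass to $\mathbf{Top}$. If anything, your direct chain of equivalences $S\subseteq f^{-1}(\theta)\iff (f\times f)(S)\subseteq\theta$ is slightly cleaner than the paper's, which routes through the claim $f(f^{-1}(\psi))=\psi$ (only the inclusion $f(f^{-1}(\psi))\subseteq\psi$ is needed, and is what actually holds in general).
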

\begin{proof}
    By the fundamental properties of coherent conditions we obviously have a function $f_\ast \colon \Spec^\varphi\mathbf{B} \rightarrow\Spec^\varphi\mathbf{A}$.
    To see that this is also a closure morphism, take any Zariski closed set $X = V_\varphi(\theta) \subseteq \Spec^\varphi\mathbf{A}$.
    Consider 
    $$f^{-1}_\ast(X) = \{ \psi \in \Spec^\varphi\mathbf{B} \mid \theta \subseteq f^{-1}(\psi) \}$$
    and define $f(\alpha) = \{ \langle f(a), f(b)\rangle \mid\langle a, b\rangle \in \alpha \}$.
    Notice that $f(f^{-1}(\psi)) = \psi$, so
    $$\theta  \subseteq f^{-1}(\psi) \implies f(\theta) \subseteq \psi$$
    Then
    $$f(\theta) \subseteq \psi \implies \theta \subseteq f^{-1}(f(\theta)) \subseteq f^{-1}(\psi)$$
    Hence 
    $$f^{-1}_\ast(X) =\{ \psi \in \Spec^\varphi\mathbf{B} \mid f(\theta) \subseteq \psi \} = V_\varphi(f(\theta))$$
    which is Zariski closed.
    By Proposition \ref{prop:topisendfunc} this is then a continuous function between the Zariski topologies.\par
    That this yields a contravariant functor follows from elementary properties of the preimage.
\end{proof}

A Zariski closed set $X \subseteq \Spec^\varphi\mathbf{A}$ turns out to have a very nice description given the subspace topology, as again the spectrum of some algebra.

\begin{theorem} \label{theo:closedsetisspec}
    Let $X \subseteq \Spec^\varphi\mathbf{A}$ be an arbitrary subset.
    Then
    $$X^\text{zc} \cong \Spec^\varphi \mathbf{A}/\psi(X)$$
    The homeomorphism being given by the natural projection $\pi \colon \mathbf{A} \rightarrow \mathbf{A}/\psi(X)$
\end{theorem}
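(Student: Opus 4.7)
The plan is to bootstrap the bijection already supplied by Proposition \ref{prop:natprojisemb} into a closure-system isomorphism, and then invoke the corollary after Proposition \ref{prop:topisendfunc} to pass to topologisations.

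First I would write $\theta = \psi(X)$ for brevity and note that, by Proposition \ref{prop:natprojisemb}, the pullback $\pi_\ast = \pi^{-1}$ is an injection from $\Spec^\varphi \mathbf{A}/\theta$ into $\Spec^\varphi \mathbf{A}$ whose image is the set of $\gamma \in \Spec^\varphi \mathbf{A}$ with $\theta \subseteq \gamma$. By definition of $V_\varphi$ this image equals $V_\varphi(\theta) = V_\varphi(\psi(X))$, which by part I of the Generalised Nullstellensatz is exactly $X^{\text{zc}}$. So $\pi_\ast$ is already a bijection onto $X^{\text{zc}}$ at the level of sets.

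Next I would verify that $\pi_\ast$ is an isomorphism of closure systems, where $X^{\text{zc}}$ carries the subspace closure system induced from the Zariski closure on $\Spec^\varphi \mathbf{A}$. In one direction: for $\bar S \subseteq (A/\theta)^2$ with preimage $S = (\pi\times\pi)^{-1}(\bar S) \subseteq A^2$, the equivalence $\bar S \subseteq \bar\gamma \iff S \subseteq \pi^{-1}(\bar\gamma)$ gives
$$\pi_\ast(V_\varphi(\bar S)) \;=\; V_\varphi(S) \cap X^{\text{zc}},$$
which is Zariski-closed in $X^{\text{zc}}$. In the other direction, for $T \subseteq A^2$ and $\gamma \in X^{\text{zc}}$ we have $\gamma \supseteq \theta$, so $T \subseteq \gamma$ is equivalent to $(\pi\times\pi)(T) \subseteq \gamma/\theta$; this yields
$$\pi_\ast^{-1}\bigl(V_\varphi(T) \cap X^{\text{zc}}\bigr) \;=\; V_\varphi\bigl((\pi\times\pi)(T)\bigr),$$
a Zariski-closed subset of $\Spec^\varphi \mathbf{A}/\theta$. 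These two computations establish that $\pi_\ast$ and its inverse both send closed sets to closed sets, i.e., that $\pi_\ast$ is an isomorphism of closure systems.

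Finally, since topologisation is functorial (the corollary to Proposition \ref{prop:topisendfunc}), a closure-system isomorphism induces a homeomorphism of the topologisations, and because subspace topologies commute with topologisation of closure systems (routine from the prebasis description), the Zariski subspace topology on $X^{\text{zc}}$ matches the topologisation of its subspace closure system. Thus $\pi_\ast$ is a homeomorphism $\Spec^\varphi \mathbf{A}/\psi(X) \to X^{\text{zc}}$. The main step requiring care is the second one: keeping the directions of $\pi \times \pi$ and $(\pi\times\pi)^{-1}$ straight when translating closed sets across the quotient, since Zariski closed sets are generated by pairs in $A^2$ rather than by individual elements.
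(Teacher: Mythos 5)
Your proof is correct and follows essentially the same route as the paper: use Proposition~\ref{prop:natprojisemb} together with the Nullstellensatz part I to identify $\pi_\ast$ as a bijection onto $X^{\text{zc}}$, check that both $\pi_\ast$ and $\pi_\ast^{-1}$ send closed sets to closed sets so that the bijection is an isomorphism of closure systems, and then pass to topologisations. The only difference is that you spell out the two closed-set computations and the commutation of subspace topology with topologisation explicitly, whereas the paper states these more tersely; your extra care there is warranted, not a detour.
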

\begin{proof}
    By Proposition \ref{prop:natprojisemb}, $\pi_\ast \colon \Spec^\varphi\mathbf{A} \rightarrow \Spec^\varphi\mathbf{A}/\psi(X)$ is an embedding with image $V_\varphi(\psi(X))$, which is just $X^\text{zc}$ by the generalised Nullstellensatz part I.\par
    Consider the set-theoretic inverse (this is the inverse of $\pi_\ast$ by the correspondence theorem):
    $$\pi_\ast^{-1} \colon \theta \mapsto \theta/\psi(X)$$
    This is also a morphism of closure systems, which can be seen by noticing the fact that the preimage of $V_\varphi(\theta/\psi(X))$ under $\pi_\ast^{-1}$ is simply $V_\varphi(\theta)$.
    As such $X^\text{zc} \cong \Spec^\varphi\mathbf{A}/\psi(X)$, since an isomorphism of closure operators induces a homeomorphism on the topologisations.
\end{proof}

\begin{corollary}
    $\Spec^\varphi\mathbf{A} \cong\Spec^\varphi(\operatorname{red}^\varphi\mathbf{A})$
\end{corollary}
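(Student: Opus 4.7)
The plan is to deduce this as a direct specialization of Theorem \ref{theo:closedsetisspec} with the choice $X = \Spec^\varphi\mathbf{A}$, the full spectrum.

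First, I would observe that with $X = \Spec^\varphi\mathbf{A}$ we have $\psi(X) = \bigcap \Spec^\varphi\mathbf{A} = \nil^\varphi\mathbf{A}$ by the very definition of the nilradical. Next, I would argue that $X^\text{zc} = X$: any Zariski closed set is by definition a subset of $\Spec^\varphi\mathbf{A}$, so the Zariski closure of the whole space is the whole space itself. (Alternatively, one can compute $X^\text{zc} = V_\varphi(\psi(X)) = V_\varphi(\nil^\varphi\mathbf{A})$ via the generalised Nullstellensatz part I, and this is all of $\Spec^\varphi\mathbf{A}$ since every $\varphi$-distinguished congruence contains $\bigcap\Spec^\varphi\mathbf{A}$.)

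Then Theorem \ref{theo:closedsetisspec} applied with this $X$ yields
$$\Spec^\varphi\mathbf{A} = X^\text{zc} \cong \Spec^\varphi \mathbf{A}/\psi(X) = \Spec^\varphi \mathbf{A}/\nil^\varphi\mathbf{A} = \Spec^\varphi(\operatorname{red}^\varphi\mathbf{A}),$$
with the homeomorphism induced, as before, by the natural projection $\mathbf{A} \rightarrow \operatorname{red}^\varphi\mathbf{A}$.

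There is no real obstacle here: everything reduces to unpacking definitions and invoking the previous theorem. The only small thing to be careful about is verifying that the two equivalent descriptions of $X^\text{zc}$ (as the closure of $X$ in the closure system and as $V_\varphi(\psi(X))$) agree with the usage in Theorem \ref{theo:closedsetisspec}, but this is precisely the content of the generalised Nullstellensatz part I.
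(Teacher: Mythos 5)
Your proof is correct and is precisely the argument the paper leaves implicit: the corollary is stated without proof immediately after Theorem \ref{theo:closedsetisspec}, and the intended reading is exactly to substitute $X = \Spec^\varphi\mathbf{A}$, note $\psi(X) = \nil^\varphi\mathbf{A}$ and $X^{\mathrm{zc}} = X$, and read off the conclusion.
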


As such, we see that coherent conditions make no difference at all between an algebra and its reduction.

\section{$K$-Spectra of free algebras}

During this section we fix a variety $V$ of type $\mathscr{F}$ and a class $K$ contained in $V$, which generates the coherent condition $\varphi = \Phi_V(K)$.\par
It is a fairly elementary fact that every algebra $\mathbf{A}\in V$ is isomorphic to some quotient of $\mathbf{F}_V(X)$ for some set $X$, for example by taking $X$ to be $A$, or some other generating set of $\mathbf{A}$.
Theorem \ref{theo:radfundclassisprevar} therefore suggest that, to study the $K$-spectra ($\varphi$-spectra) in $V$, it suffices to study simply the Zariski (topologically) closed sets of $\mathbf{F}_V(X)$.
That is what we will do now, consequently showing the connections between the various types of closed sets, and sets of formulas, and stating the second part of the universal Nullstellensatz.\par
The approach in the first subsection is inspired by \cite{Equationaldomains}.

\subsection{Disjunctive systems}

Recall that a subset $T \subseteq F_V(X)^2$ can just as well be treated as a system of equations for algebras in $V$, i.e. as a shorthand for the proposition $\bigwedge_{\langle p, q \rangle \in T} p \approx q$.
Hence $\mathbf{A} \models T(\vec{a})$ means every equation in $T$ is satisfied by $\vec{a} \in A^X$ in $\mathbf{A}$.
Also, for $\vec{a} \in A^X$ we denote $h_{\vec{a}} \colon \mathbf{F}_V(X) \rightarrow \mathbf{A}$ for the evaluation map induced by the choice of variables $x \mapsto a_x$.

This gives us a nice interpretation of the Zariski closed sets of $\Spec^\varphi\mathbf{F}_V(X)$.

\begin{lemma} \label{lem:closedsetisevalmapker}
    Let $T \subseteq F_V(X)^2$ be a system of equations.
    It follows that
    $$V_\varphi(T) = \{ \theta \in \operatorname{Con}\mathbf{F}_V(X) \mid \theta = \ker h_{\vec{a}} \text{ where } h_{\vec{a}} \colon \mathbf{F}_V(X) \to \mathbf{A} \in K, \mathbf{A} \models T(\vec{a})\}$$
\end{lemma}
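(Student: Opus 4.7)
The plan is to prove the equality by double inclusion, exploiting the universal property of $\mathbf{F}_V(X)$ together with the definition of $\varphi$-distinguished as ``kernel of a homomorphism into $K$''.

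For the right-to-left inclusion, suppose $\theta = \ker h_{\vec{a}}$ for some evaluation map $h_{\vec{a}} \colon \mathbf{F}_V(X) \to \mathbf{A}$ with $\mathbf{A} \in K$ and $\mathbf{A} \models T(\vec{a})$. By the first isomorphism theorem, $\mathbf{F}_V(X)/\theta$ embeds into $\mathbf{A}$, so $\theta \in \Spec^\varphi\mathbf{F}_V(X)$. Moreover, for each $\langle p, q\rangle \in T$, the assumption $\mathbf{A} \models p(\vec{a}) \approx q(\vec{a})$ reads $h_{\vec{a}}(p) = h_{\vec{a}}(q)$, i.e.\ $\langle p, q\rangle \in \ker h_{\vec{a}} = \theta$. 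Hence $T \subseteq \theta$, so $\theta \in V_\varphi(T)$.

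For the left-to-right inclusion, take $\theta \in V_\varphi(T)$. Since $\theta$ is $\varphi$-distinguished there is an embedding $i \colon \mathbf{F}_V(X)/\theta \hookrightarrow \mathbf{A}$ for some $\mathbf{A} \in K$; composing with the natural projection gives $h := i \circ \pi_\theta \colon \mathbf{F}_V(X) \to \mathbf{A}$ with $\ker h = \theta$. Because $h$ is a homomorphism out of a $V$-free algebra, setting $a_x := h(x)$ identifies $h$ with the evaluation map $h_{\vec{a}}$. Finally, for any $\langle p, q\rangle \in T \subseteq \theta = \ker h_{\vec{a}}$ we have $h_{\vec{a}}(p) = h_{\vec{a}}(q)$, which is exactly $\mathbf{A} \models p(\vec{a}) \approx q(\vec{a})$, so $\mathbf{A} \models T(\vec{a})$ and $\theta$ lies in the right-hand set.

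Neither direction should present real difficulties: the only subtle point is keeping the two senses of ``$T$'' straight (as a set of pairs in $F_V(X)^2$ versus as a system of equations evaluated at $\vec{a}$), and that is handled automatically because $\langle p,q\rangle \in \ker h_{\vec{a}}$ and $\mathbf{A} \models p(\vec{a})\approx q(\vec{a})$ are literally the same condition.
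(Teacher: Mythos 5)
Your proof is correct and follows essentially the same approach as the paper: identify $\varphi$-distinguished congruences with kernels of evaluation maps into $K$ via the universal mapping property of $\mathbf{F}_V(X)$, and then unwind $T\subseteq\ker h_{\vec{a}}$ as $\mathbf{A}\models T(\vec{a})$. The paper phrases this as two biconditionals rather than a double inclusion, but the content is identical.
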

\begin{proof}
    The kernel of any $h_{\vec{a}}$ to $\mathbf{A} \in K$ is by definition $\varphi$-distinguished, and by the universal mapping property any homomorphism is an evaluation morphism, so any $\varphi$-distinguished congruence must be the kernel of an evaluation morphism into $K$.\par
    Now:
    \begin{align*}
        \mathbf{A} \models T(\vec{a}) &\iff p^\mathbf{A}(\vec{a}) = q^\mathbf{A}(\vec{a}) \; \forall \langle p, q\rangle \in T\\
        &\iff h_{\vec{a}}(p) = h_{\vec{a}}(q) \; \forall \langle p, q\rangle \in T\\
        &\iff T\subseteq \ker h_{\vec{a}}
    \end{align*}
    so we are done.
\end{proof}

We can generalise this to an arbitrary algebra $\mathbf{B}$ as follows:
There are always exists some projection $\pi \colon \mathbf{F}_V(X) \rightarrow \mathbf{B}$, and $\theta$ of $\mathbf{B}$ is $\varphi$-distinguished iff $\pi^{-1}(\theta)$ is.
As such, $V_\varphi(S)\subseteq \Spec^\varphi\mathbf{B}$ can be thought of as the (kernels of) interpretations of $\mathbf{B}$ in some $\mathbf{A} \in K$ that satisfy the relations in $S$.\par
Let us now begin working towards the second part of the Nullstellensatz.
There happen to be nice 'closed forms` for nonempty Zariski closed and Zariski topologically closed sets of the $K$-spectrum of $\mathbf{F}_V(X)$.

\begin{lemma} \label{lem:closedformZariskitop}
    For pairs $\epsilon_{i, k} \in F_V(X)^2$ where $n_i>0$:
    $$\bigcap_{i \in I} V_\varphi(\epsilon_{i, 1}) \cup\dots\cup V_\varphi(\epsilon_{i, n_i}) = \{ \theta \in \Spec^\varphi\mathbf{F}_V(X) \mid K\models \theta \rightarrow \bigwedge_{i \in I}\bigvee_{k=1}^{n_i} \epsilon_{i, k}\}$$
    And, in particular for a system of equations $T$:
    $$V_\varphi(T) = \{ \theta \in \Spec^\varphi \mathbf{F}_V(X) \mid K \models \theta \rightarrow T \}$$
\end{lemma}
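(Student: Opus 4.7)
My plan is to reduce both equalities to a single atomic observation combined with Lemma \ref{lem:closedsetisevalmapker}. The atomic fact is that for any pair $\langle p, q\rangle \in F_V(X)^2$, any $\mathbf{A}\in K$, and any evaluation $h_{\vec{a}} \colon \mathbf{F}_V(X) \rightarrow \mathbf{A}$,
$$\langle p, q\rangle \in \ker h_{\vec{a}} \iff h_{\vec{a}}(p) = h_{\vec{a}}(q) \iff \mathbf{A}\models (p\approx q)(\vec{a}).$$
By Lemma \ref{lem:closedsetisevalmapker}, every $\theta \in \Spec^\varphi\mathbf{F}_V(X)$ is itself the kernel of such an evaluation into some $\mathbf{A}\in K$, so membership questions for $\theta$ translate into satisfaction questions in $K$ and vice versa.

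I would prove the second displayed equality first, since it is the technical core. For ($\subseteq$), if $T \subseteq \theta$ and $\mathbf{A}\models \theta(\vec{a})$, then every pair in $T$ lies in $\theta$, and the atomic fact immediately gives $\mathbf{A}\models T(\vec{a})$. For ($\supseteq$), I take $\theta$ in the right-hand side, use the lemma to realise $\theta = \ker h_{\vec{a}}$ with $\mathbf{A}\in K$, note that $\mathbf{A}\models \theta(\vec{a})$ holds automatically because $\theta$ \emph{is} this kernel, and then apply the hypothesis $K\models \theta\rightarrow T$ to this particular pair $(\mathbf{A}, \vec{a})$ to conclude $\mathbf{A}\models T(\vec{a})$, i.e. $T\subseteq \ker h_{\vec{a}} = \theta$.

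The first, more general equality is the same argument applied disjunct-wise. In ($\subseteq$), if $\theta$ lies in the intersection then for each $i$ there is some $k_i$ with $\epsilon_{i,k_i}\in \theta$, which directly witnesses the $i$-th disjunction under any $\vec{a}$ satisfying $\theta$. In ($\supseteq$), I again realise $\theta = \ker h_{\vec{a}}$ via the lemma, observe $\mathbf{A}\models \theta(\vec{a})$, and apply the hypothesis to extract, for each $i$, some $k$ with $\mathbf{A}\models \epsilon_{i,k}(\vec{a})$; the atomic fact then places $\epsilon_{i,k}$ in $\ker h_{\vec{a}} = \theta$, so $\theta$ lies in the $i$-th union. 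I do not anticipate any serious obstacle: the whole argument is just a translation between the syntactic side (pairs lying in $\theta$) and the semantic side (satisfaction by an evaluation map) through the kernel identification, and the only care needed is to keep the quantifier structure of $K\models\theta\rightarrow\cdots$ straight, instantiating it either universally (easy direction) or at the specific $(\mathbf{A},\vec{a})$ produced by the lemma (harder direction).
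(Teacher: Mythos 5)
Your proposal is correct and follows essentially the same route as the paper's proof: translate membership of pairs in $\theta$ into satisfaction under an evaluation map via Lemma~\ref{lem:closedsetisevalmapker}, then instantiate $K\models\theta\rightarrow\cdots$ at the specific evaluation realising $\theta$. The only cosmetic difference is that the paper works with the canonical projection $\pi_\theta\colon\mathbf{F}_V(X)\to\mathbf{F}_V(X)/\theta$ (which lands in $IS(K)$, with the embedding into $K$ left implicit), whereas you take an arbitrary evaluation $h_{\vec a}$ directly into some $\mathbf{A}\in K$, which is marginally cleaner; likewise you prove the degenerate equational case first and the paper proves the finite-disjunction case first, but these are the same argument in a different order.
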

\begin{proof}
    We will start by proving the claim that 
    $$V_\varphi(\epsilon_1) \cup \dots \cup V_\varphi(\epsilon_n) = \{ \theta \in \Spec^\varphi \mathbf{F}_V(X) \mid K \models \theta \to \epsilon_1 \vee \dots \vee \epsilon_n \}$$
    One direction of the inclusion is fairly direct.
    If $\theta \in V_\varphi(\epsilon_k)$ for some $1 \leq k \leq n$, then $\epsilon_k \in \theta$, so of course $K \models \theta \rightarrow \epsilon_k$, and in particular $K \models \theta \rightarrow \epsilon_1 \vee \dots \vee \epsilon_k$.\par
    Conversely, suppose that $K \models \theta \to \epsilon_1 \vee \dots \vee \epsilon_n$ for some congruence $\theta$.
    Then
    $$\mathbf{A} := \mathbf{F}_V(X) / \theta \in IS(K)$$
    $$\implies \mathbf{A} \models \theta \to \epsilon_1 \vee \dots \vee \epsilon_n$$
    Let $\bar{x} \in A^X$ be the sequence of elements $(x/\theta)_{x \in X}$.
    Notice then that $\pi_\theta \colon \mathbf{F}_V(X) \to \mathbf{A}$ is simply the evaluation map $h_{\bar{x}}$, using the notation of the lemma above.
    So we see, for $\langle p, q \rangle \in \theta$:
    $$p^\mathbf{A}(\bar{x}) = \pi_\theta(p) = \pi_\theta(q) = q^\mathbf{A}(\bar{x})$$
    $$\implies \mathbf{A} \models \theta(\bar{x})$$
    hence there exists a $1 \leq k \leq n$ with $\mathbf{A} \models \epsilon_k(\bar{x})$, and therefore by Lemma \ref{lem:closedsetisevalmapker} we have that $\theta \in V_\varphi(\epsilon_k) \subseteq V_\varphi(\epsilon_1) \cup \dots \cup V_\varphi(\epsilon_n)$, as desired.\par
    The rest of the proof now follows trivially.
\end{proof}

This motivates the following definition.

\begin{definition}
    Let $S$ be a set of finite disjunctions of equations (or pairs in $F_V(X)$), which we call a \textit{distjunctive system}.
    We define
    $$V_\varphi(S) = \{ \theta \in  \Spec^\varphi\mathbf{F}_V(X) \mid K\models \theta \rightarrow \bigwedge_{\Phi \in S}\Phi \}$$
    In particular, this notation coincides with the usual one whenever $S$ consists only of equations.
\end{definition}

From Corollary \ref{cor:principalclosedsets} we know that the principal closed sets $V_\varphi(\epsilon)$ give a prebasis of the Zariski topology.
What Lemma \ref{lem:closedformZariskitop} then tells us, is that every nonempty Zariski topologically closed set of $\mathbf{F}_V(X)$ can be written as $V_\varphi(S)$ for some disjunctive system $S$.

\begin{lemma} \label{lem:monotonicity}
    Let $S_1, S_2$ be distjunctive systems with variables in $X$.
    Then
    $$V_\varphi(S_1) \subseteq V_\varphi(S_2) \iff K \models \bigwedge_{\Phi_1  \in S_1} \Phi_1 \rightarrow \bigwedge_{\Phi_2 \in S_2} \Phi_2$$
\end{lemma}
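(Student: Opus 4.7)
My plan is to handle the two directions of the equivalence separately. The reverse direction is essentially formal: if $K \models \bigwedge_{\Phi_1 \in S_1} \Phi_1 \to \bigwedge_{\Phi_2 \in S_2} \Phi_2$, then for any $\theta \in V_\varphi(S_1)$ the defining property $K \models \theta \to \bigwedge_{\Phi_1 \in S_1} \Phi_1$ composes with the hypothesis to give $K \models \theta \to \bigwedge_{\Phi_2 \in S_2} \Phi_2$, placing $\theta$ in $V_\varphi(S_2)$.

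The forward direction is where the work lies, and I would prove it by producing a suitable witnessing congruence. Given $\mathbf{A} \in K$ and $\vec{a} \in A^X$ satisfying every $\Phi_1 \in S_1$, set $\theta := \ker h_{\vec{a}}$. This is $\varphi$-distinguished since $\mathbf{F}_V(X)/\theta \hookrightarrow \mathbf{A} \in K$. The next step is to verify $\theta \in V_\varphi(S_1)$: for each disjunction $\Phi_1 = \epsilon_1 \vee \dots \vee \epsilon_n \in S_1$, the satisfaction $\mathbf{A} \models \Phi_1(\vec{a})$ means some $\epsilon_k = \langle p, q \rangle$ has $h_{\vec{a}}(p) = h_{\vec{a}}(q)$, so $\epsilon_k \in \theta$, and hence $\theta \in V_\varphi(\epsilon_k) \subseteq V_\varphi(\{\Phi_1\})$ by Lemma \ref{lem:closedformZariskitop}. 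The assumed inclusion then yields $\theta \in V_\varphi(S_2)$, which by definition means $K \models \theta \to \bigwedge_{\Phi_2} \Phi_2$; since $\mathbf{A} \models \theta(\vec{a})$ trivially, evaluating at $(\mathbf{A}, \vec{a})$ gives $\mathbf{A} \models \bigwedge_{\Phi_2} \Phi_2(\vec{a})$ as required.

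The only real subtlety is the step verifying $\theta \in V_\varphi(S_1)$, which exploits the fact that the embedding $\mathbf{F}_V(X)/\theta \hookrightarrow \mathbf{A}$ reflects equalities: disjunctive satisfaction in $\mathbf{A}$ at $\vec{a}$ forces some particular term-pair into $\ker h_{\vec{a}}$, at which point Lemma \ref{lem:closedformZariskitop} and the definition of $V_\varphi$ on disjunctive systems take over. Everything else is just chasing the semantic content of the notation $K \models \theta \to \Phi$.
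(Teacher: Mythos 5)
Your proof is correct and follows essentially the same route as the paper's: both produce the witnessing congruence $\theta = \ker h_{\vec{a}}$, observe that satisfaction of each disjunction at $\vec{a}$ forces some pair into $\theta$ so that $\theta \in V_\varphi(S_1)$, push $\theta$ into $V_\varphi(S_2)$ by the assumed inclusion, and then re-evaluate at $(\mathbf{A}, \vec{a})$. The only difference is cosmetic — you unpack the final step directly from the definitional form $K \models \theta \to \bigwedge_{\Phi_2} \Phi_2$ together with $\mathbf{A} \models \theta(\vec{a})$, where the paper leaves this as "the same dance" in reverse.
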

\begin{proof}
    $(\Leftarrow)$
    This direction is fairly clear.
    Suppose that $K \models \bigwedge_{\Phi_1  \in S_1} \Phi_1 \rightarrow \bigwedge_{\Phi_2 \in S_2} \Phi_2$, and let $\theta\in V_\varphi(S_1)$
    $$\implies K\models \theta \rightarrow \bigwedge_{\Phi_1 \in  S_1} \Phi_1$$
    $$\implies K \models \theta \rightarrow \bigwedge_{\Phi_2\in S_2}\Phi_2$$
    $$\implies \theta \in V_\varphi(S_2)$$
    simply by transitivity.\par
    $(\Rightarrow)$
    To see that the converse is also true, we again suppose that $V_\varphi(S_1)\subseteq V_\varphi(S_2)$.
    Now, consider $\mathbf{A} \in K$ and $\vec{a} \in A^X$ such that
    $$\mathbf{A} \models (\bigwedge_{\Phi_1\in S_1}\Phi_1)(\vec{a})$$
    Thus, for all $\bigvee_{k=1}^n p_k\approx q_k \in S_1$ there exists a $1\leq i \leq n$ such that $p_i^\mathbf{A}(\vec{a}) = q^\mathbf{A}_i(\vec{a})$.
    By Lemma \ref{lem:closedsetisevalmapker}:
    $$\ker h_{\vec{a}} \in V_\varphi(\langle p_i, q_i\rangle) \subseteq V_\varphi(\bigvee_{k=1}^n p_k\approx q_k)$$
    $$\implies \ker h_{\vec{a}} \in \bigcap_{\Phi \in S_1} V_\varphi(\Phi) = V_\varphi(S_1)$$
    $$\implies \ker h_{\vec{a}} \in V_\varphi(S_2)$$
    One can then do the same dance to show  that $\mathbf{A} \models (\bigwedge_{\Phi_2 \in S_2} \Phi_2)(\vec{a})$.
    Therefore
    $$\mathbf{A}\models \bigwedge_{\Phi_1  \in S_1} \Phi_1 \rightarrow \bigwedge_{\Phi_2 \in S_2} \Phi_2$$
    And, as $\mathbf{A}$ was arbitrary, $K$ must satisfy that formula too.
\end{proof}

We now state the second part of the generalised Nullstellensatz.

\begin{theorem} [Generalised Nullstellensatz part II]
    Let $S_1, S_2$ be disjunctive systems over $X$ and $A \subseteq \Spec^\varphi\mathbf{F}_V(X)$ Zariski topologically closed and nonempty.
    Then
    $$V_\varphi(S_1) \subseteq V_\varphi(S_2) \iff K \models \bigwedge_{\Phi_1  \in S_1} \Phi_1 \rightarrow \bigwedge_{\Phi_2 \in S_2} \Phi_2$$
    Furthermore, for $T_1, T_2 \subseteq F_V(X)^2$:
    $$V_\varphi(T_1) = \{ \theta  \in \Spec^\varphi\mathbf{F}_V(X) \mid K\models \theta\rightarrow T_1\}$$
    $$T_1\subseteq \rad^\varphi T_2  \iff K\models T_2 \rightarrow T_1$$
\end{theorem}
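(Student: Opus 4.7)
The first equivalence of the theorem is literally the content of Lemma \ref{lem:monotonicity}, since a set $T$ of equations is just a disjunctive system all of whose disjunctions have length one; so for that part there is nothing to do. The second displayed equation, $V_\varphi(T_1) = \{\theta \in \Spec^\varphi\mathbf{F}_V(X) \mid K \models \theta \to T_1\}$, is precisely the specialisation of Lemma \ref{lem:closedformZariskitop} to the case where the disjunctive system consists only of single equations, so again nothing new is required.

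The genuinely new content is the final equivalence, $T_1 \subseteq \rad^\varphi T_2 \iff K \models T_2 \to T_1$. My plan is to reduce this to the Galois connection from the Generalised Nullstellensatz part I combined with the monotonicity lemma. First I would unfold the definition: by Proposition \ref{prop:radfacts} and the fact that $V_\varphi(T_2) = V_\varphi(\Theta(T_2))$, we have
$$\rad^\varphi T_2 \;=\; \rad^\varphi \Theta(T_2) \;=\; \bigcap V_\varphi(\Theta(T_2)) \;=\; \bigcap V_\varphi(T_2) \;=\; \psi(V_\varphi(T_2)).$$

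Given that, the claimed equivalence becomes $T_1 \subseteq \psi(V_\varphi(T_2)) \iff K \models T_2 \to T_1$. By the Galois connection established in the Generalised Nullstellensatz part I (applied with $S := T_1$ and $X := V_\varphi(T_2)$, which is Zariski closed, hence an admissible input), the left-hand side is equivalent to $V_\varphi(T_2) \subseteq V_\varphi(T_1)$. And by the first equivalence of the present theorem (i.e.\ Lemma \ref{lem:monotonicity}) specialised to disjunctive systems consisting of single equations, this containment of Zariski closed sets is in turn equivalent to $K \models T_2 \to T_1$. Chaining these three equivalences gives the result.

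The only potential subtlety, which is not really an obstacle but worth spelling out, is the consistent interpretation of the symbol $T_2$: as a set of pairs in $F_V(X)^2$, as the congruence $\Theta(T_2)$ it generates, and as the conjunctive proposition $\bigwedge_{\langle p,q\rangle \in T_2} p \approx q$. The convention $V_\varphi(T) = V_\varphi(\Theta(T))$ already noted after the definition of $V_\varphi$, together with the fact that $K \models \theta \to T_2$ depends only on $\Theta(T_2)$ modulo deductive closure, means all three readings agree for the purposes above, so no extra work is needed. The hypothesis that $A$ be Zariski topologically closed and nonempty plays no role in the argument; it appears only to make the statement a convenient package for later use.
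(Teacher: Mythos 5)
Your proof is correct and uses the same two ingredients as the paper: Lemma \ref{lem:closedformZariskitop}, Lemma \ref{lem:monotonicity}, and the Galois connection/$\psi(V_\varphi(S))=\rad^\varphi S$ formula from Part I. The only difference is cosmetic: you chain $T_1 \subseteq \psi(V_\varphi(T_2)) \iff V_\varphi(T_2) \subseteq V_\varphi(T_1) \iff K \models T_2 \to T_1$ by applying the antitone Galois connection directly to the (not-necessarily-radical) set $T_1$, whereas the paper first replaces $T_1 \subseteq \rad^\varphi T_2$ by $\rad^\varphi T_1 \subseteq \rad^\varphi T_2$ using idempotency/monotonicity of the radical closure operator and then concludes; these are interchangeable one-line rearrangements of the same argument, and yours is if anything a touch more economical. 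One small wrinkle: your remark that the first equivalence follows ``since a set $T$ of equations is just a disjunctive system all of whose disjunctions have length one'' is unnecessary and slightly misplaced, because the first displayed equivalence is already stated for arbitrary disjunctive systems $S_1, S_2$ and is word-for-word the conclusion of Lemma \ref{lem:monotonicity}; no specialisation to length-one disjunctions is involved there (that specialisation is relevant to the second displayed equation, not the first). Your observation that the hypothesis on $A$ being Zariski topologically closed and nonempty is never used is accurate; it appears to be a vestigial assumption.
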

\begin{proof}
    The first two are simply Lemmas \ref{lem:closedformZariskitop} and \ref{lem:monotonicity}.
    The third uses the fact that $T \subseteq F_V(X)^2$ can be interpreted as a set of atomic formulas $S$, and as such
    $$K \models T \leftrightarrow \bigwedge_{\epsilon \in S} \epsilon$$
    and thus by the generalised Nullstellensatz part I and Lemma \ref{lem:monotonicity}
    $$\rad^\varphi T_1 \subseteq \rad^\varphi T_2 \iff K  \models T_2 \rightarrow T_1$$
    Therefore, by elementary properties of closure operators:
    $$T_1 \subseteq \rad^\varphi T_2 \iff \rad^\varphi T_1 \subseteq \rad^\varphi T_2  \iff K \models T_2 \rightarrow T_1$$
    as desired.
\end{proof}

The case $\varnothing \subseteq \Spec^\varphi\mathbf{F}_V(X)$ is worth discussing.
Closed sets of the form $V_\varphi(S)$ with $S$ a disjunctive system are precisely the sets which can be written as an intersection of finite unions of Zariski closed sets (as the principal sets $V_\varphi(p, q)$ form a subbasis).
This means that $V_\varphi(S) = \varnothing$ for some $S$ iff $\varnothing$ is Zariski closed, iff the trivial algebra is not contained in $IS(K)$.

Also, the third item of the generalised Nullstellensatz can be restated as
$$\rad^\varphi T = \{ \langle p, q\rangle \in F_V(X)^2 \mid K\models T \rightarrow p\approx q\}$$
Here we can really see that congruences play the role of points in $A^ n$ for some algebra $\mathbf{A}$: $p^\mathbf{A}(a_1, \dots, a_n) = q^\mathbf{A}(a_1, \dots, a_n)$ for $(a_1, \dots, a_n) \in A^n$ is simply replaced by $K\models \theta \rightarrow p\approx q$.
Indeed, the analogy between these two is the essence of Lemma \ref{lem:closedsetisevalmapker}.

\subsection{Connections with standard universal algebraic geometry}

We will follow the definitions and notation of \cite{daniyarova2011algebraicgeometryalgebraicstructures}.
Fix an algebra $\mathbf{A}$ over $\mathscr{F}$ and let $\mathfrak{F}$ denote the class of all algebras of type $\mathscr{F}$.
This subsection will aim to show that the coherent condition $\varphi_\mathbf{A} := \Phi_{\mathfrak{F}}(\mathbf{A})$ is functionally the same as the standard universal algebraic geometry of $\mathbf{A}$.

\begin{definition}
    Let $T \subseteq \operatorname{At}(x_1, \dots, x_n)$ be a set of equations.
    Then 
    $$V_\mathbf{A}(T) = \{ (a_1, \dots, a_n) \mid p^\mathbf{A}(a_1, \dots, a_n) = q^\mathbf{A}(a_1, \dots, a_n) \; \forall p\approx q \in T \}$$
    And if $X \subseteq A^n$ then
    $$\operatorname{Rad}(X)= \{ p\approx q \in \operatorname{At}(x_1, \dots, x_n) \mid p^\mathbf{A}(a_1, \dots, a_n) = q^\mathbf{A}(a_1, \dots, a_n) \; \forall (a_1, \dots, a_n) \in X \}$$
    $\theta_{\operatorname{Rad}(X)}$ denotes the congruence on the term algebra $\mathbf{T}(x_1, \dots, x_n)$ corresponding to $\operatorname{Rad}(X)$.
\end{definition}

One can easily check that the algebraic sets $V_\mathbf{A}(T)$ define a closure system on $A^n$, which we (uniquely to this paper and mainly for convenience) denote by $\mathbb{A}^n(\mathbf{A})$.
We aim to show that $\mathbb{A}^n(\mathbf{A})$ is actually in many ways no different from $\Spec^\varphi \mathbf{T}(x_1,\dots, x_n)$, by way of a quasi-isomorphism.\par
Recall that, for $\vec{a} = (a_1, \dots, a_n)$, $h_{\vec{a}} \colon \mathbf{T}(x_1, \dots, x_n) \rightarrow \mathbf{A}$ denotes the homomorphism taking $x_i$ to $a_i$.

\begin{theorem}
    The map
    $$\alpha \colon A^n \rightarrow \Spec^\varphi \mathbf{T}(x_1, \dots, x_n)$$
    $$(a_1, \dots, a_n) \mapsto \ker h_{\vec{a}}$$
    is a quasi-isomorphism from $\mathbb{A}^n(\mathbf{A})$ to the $\varphi$-spectrum of the term algebra.
\end{theorem}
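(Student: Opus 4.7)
The plan is to identify closed sets on both sides explicitly and to verify the three ingredients of being a quasi-isomorphism --- closure morphism, image of closed is closed, and bijection on closed sets --- by direct computation, exploiting the fact that $\mathbf{T}(x_1, \dots, x_n)$ is the absolutely free algebra on $n$ generators of type $\mathscr{F}$.

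First I would observe that $\alpha$ is surjective onto $\Spec^\varphi \mathbf{T}(x_1, \dots, x_n)$. If $\theta$ is $\varphi$-distinguished then $\mathbf{T}(x_1, \dots, x_n)/\theta$ embeds into $\mathbf{A}$, and composing with the natural projection gives a homomorphism $\mathbf{T}(x_1, \dots, x_n) \to \mathbf{A}$ with kernel $\theta$. By the universal property of the absolutely free algebra, every such homomorphism is an evaluation map $h_{\vec{a}}$ for some $\vec{a} \in A^n$. This is essentially Lemma \ref{lem:closedsetisevalmapker} specialised to $K = \{\mathbf{A}\}$.

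Next I would compute the preimage of a basic Zariski closed set: unfolding the definition of $\ker h_{\vec{a}}$,
$$\alpha^{-1}(V_\varphi(T)) = \{\vec{a} \in A^n \mid T \subseteq \ker h_{\vec{a}}\} = \{\vec{a} \in A^n \mid p^\mathbf{A}(\vec{a}) = q^\mathbf{A}(\vec{a}) \text{ for all } p \approx q \in T\} = V_\mathbf{A}(T).$$
So preimages of Zariski closed sets are precisely the algebraic sets, which makes $\alpha$ a closure morphism and also shows that the preimage map on closed sets is surjective (every $V_\mathbf{A}(T)$ arises as $\alpha^{-1}(V_\varphi(T))$). Applying $\alpha$ to both sides and using surjectivity of $\alpha$ yields $\alpha(V_\mathbf{A}(T)) = V_\varphi(T)$, so $\alpha$ maps closed sets to closed sets. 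Injectivity of the preimage map is also immediate from surjectivity of $\alpha$: if $\alpha^{-1}(V_\varphi(T_1)) = \alpha^{-1}(V_\varphi(T_2))$, applying $\alpha$ to both sides gives $V_\varphi(T_1) = V_\varphi(T_2)$.

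The main obstacle --- in truth the only nontrivial step --- is identifying the $\varphi$-distinguished congruences of $\mathbf{T}(x_1, \dots, x_n)$ with kernels of evaluation maps into $\mathbf{A}$, after which everything reduces to bookkeeping around the identity $\alpha^{-1}(V_\varphi(T)) = V_\mathbf{A}(T)$ and the surjectivity of $\alpha$ as a set map.
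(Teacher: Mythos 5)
Your proposal is correct and follows essentially the same route as the paper: both hinge on the single identity $\alpha^{-1}(V_\varphi(T)) = V_\mathbf{A}(T)$ together with the surjectivity of $\alpha$, which is Lemma~\ref{lem:closedsetisevalmapker} specialised to the absolutely free algebra. You spell out the surjectivity step and the injectivity of the preimage map slightly more explicitly than the paper does, but the decomposition and key computation are identical.
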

\begin{proof}
    It should be obvious that $\alpha$ is a well-defined map into $\Spec^\varphi\mathbf{T}(x_1, \dots, x_n)$.
    Let $V_\varphi(S) \subseteq \Spec^\varphi\mathbf{T}(x_1, \dots, x_n)$ be a Zariski closed set, and note that we can interpret $S$ as a system of equations.
    Consider
    $$\alpha^{-1}(V_\varphi(S)) = \{ (a_1, \dots, a_n) \mid S \subseteq \ker h_{\vec{a}} \}$$
    But of course
    \begin{align*}
        S \subseteq \ker h_{\vec{a}} &\iff h_{\vec{a}}(p) = h_{\vec{a}}(q) \; \forall \langle p, q\rangle \in S\\
        &\iff p^\mathbf{A}(\vec{a}) = q^\mathbf{A}(\vec{a})\\
        &\iff \vec{a} \in V_\mathbf{A}(S)
    \end{align*}
    $$\implies \alpha^{-1}(V_\varphi(S)) = V_\mathbf{A}(S)$$
    So $\alpha$ is a morphism of closure systems.
    Now suppose $V_\mathbf{A}(T) \subseteq A^n$ an algebraic set.
    Then by the above $\alpha^{-1}(V_\varphi(T)) = V_\mathbf{A}(T)$, so $\alpha(V_\mathbf{A}(T)) = V_\varphi(T)$, as $\alpha$ is surjective.\par
    This proves that $\alpha$ sends closed sets to closed sets, and also that $\alpha^{-1}$ gives a bijection between closed sets, so we get the desired result.
\end{proof}

As a quasi-isomorphism induces an isomorphism on the lattices of closed sets, properties like chain conditions are equivalent between quasi-isomorphic closure systems, but also this means that the Zariski closure operator on $\Spec^\varphi \mathbf{T}(x_1, \dots, x_n)$ is topological iff the one on $\mathbb{A}^n(\mathbf{A})$ is.
Furthermore, as we will see, quasi-isomorphisms respect and reflect irreducibility.\par
This last fact is obvious.

\begin{proposition}
    $\theta_{\operatorname{Rad}(X)} = \psi(\alpha(X))$ for any $X\subseteq k^n$
\end{proposition}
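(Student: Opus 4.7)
The claim unpacks entirely into definitions, so the plan is to show both sides describe the same set of pairs in $T(x_1, \dots, x_n)^2$ by a direct chain of equivalences.

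First I would note what each side is as a set of pairs. By definition, $\theta_{\operatorname{Rad}(X)}$ is the congruence on $\mathbf{T}(x_1, \dots, x_n)$ whose pairs $\langle p, q\rangle$ are exactly those with $p \approx q \in \operatorname{Rad}(X)$, i.e.\ those with $p^\mathbf{A}(\vec{a}) = q^\mathbf{A}(\vec{a})$ for every $\vec{a} \in X$. On the other side, using that $\alpha(\vec{a}) = \ker h_{\vec{a}}$ and $\psi(Y) = \bigcap Y$, we have $\psi(\alpha(X)) = \bigcap_{\vec{a} \in X} \ker h_{\vec{a}}$, whose pairs are those $\langle p, q\rangle$ with $h_{\vec{a}}(p) = h_{\vec{a}}(q)$ for all $\vec{a} \in X$.

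Second, I would close the loop using the basic fact about the evaluation homomorphism $h_{\vec{a}} \colon \mathbf{T}(x_1, \dots, x_n) \to \mathbf{A}$, namely that $h_{\vec{a}}(t) = t^\mathbf{A}(\vec{a})$ for every term $t$. This is exactly the same identification invoked in the proof of Lemma \ref{lem:closedsetisevalmapker}. Combining this with the description above yields
\[
\langle p, q\rangle \in \psi(\alpha(X)) \iff p^\mathbf{A}(\vec{a}) = q^\mathbf{A}(\vec{a}) \ \forall \vec{a} \in X \iff \langle p, q\rangle \in \theta_{\operatorname{Rad}(X)},
\]
giving the equality.

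There is no genuine obstacle here; the only thing to be slightly careful about is the notational identification between $\operatorname{Rad}(X)$ as a set of atomic formulas and the induced congruence $\theta_{\operatorname{Rad}(X)}$ on the term algebra, but this is built into the definition. So the whole argument is a one-line chain of iff's after recalling that evaluation in $\mathbf{A}$ agrees with the term operation, and it can be presented in just a few lines.
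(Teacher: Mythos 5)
Your proof is correct, and it is the direct unpacking of definitions that the paper itself deems obvious (it offers no written proof, only the remark that the fact is clear). Both sides reduce to the set of pairs $\langle p, q\rangle$ with $p^\mathbf{A}(\vec{a}) = q^\mathbf{A}(\vec{a})$ for all $\vec{a} \in X$, via the identification $h_{\vec{a}}(t) = t^\mathbf{A}(\vec{a})$, exactly as you argue.
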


\section{Equationally Noetherian classes}

Unique to universal algebraic geometry (and essentially any algebraic geometry that is not that of commutative rings) is the need for a notion of an equationally Noetherian algebra.
Recalling from \cite{daniyarova2011algebraicgeometryalgebraicstructures}, we have the following definition:

\begin{definition}
    An algebra $\mathbf{A}$ is equationally Noetherian iff for any $T\subseteq \operatorname{At}(x_1, \dots, x_n)$ there exists a finite subsystem $T_0 \subseteq T$ with $V_\mathbf{A}(T) = V_\mathbf{A}(T_0)$ (in the sense of section 3.2) or equivalently $\mathbf{A} \models T \leftrightarrow T_0$
\end{definition}

From (\cite{Daniyarova2008UnificationTI}, Remark 4.8) we have the following:

\begin{proposition} \label{prop:noethtopology}
    Let $X$ be a topological space with prebasis $\mathcal{B}$ of closed subsets.
    Then the following are equivalent:
    \begin{itemize}
        \item The closed sets of $X$ satisfy the descending chain condition;
        \item $\mathcal{B}$ satisfies the descending chain condition.
    \end{itemize}
    And if either of these are satisfied, in which case we call $X$ Noetherian, we have
    \begin{itemize}
        \item $X$ has a finite number of irreducible components
        \item Every closed subset of $X$ can be written as a finite union of irreducible sets.
        In particular, $X$ is covered by its irreducible components.
    \end{itemize}
\end{proposition}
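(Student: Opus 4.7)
This proposition has two pieces: the equivalence of DCC on the prebasis $\mathcal{B}$ with DCC on the full lattice of closed sets, and then the standard Noetherian decomposition. The plan is to settle the equivalence first and deduce the Noetherian consequences from it by well-founded induction.

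For the equivalence, one direction is immediate, since $\mathcal{B}$ sits inside the collection of closed sets of $X$. For the converse, assume DCC on $\mathcal{B}$ and suppose toward contradiction that there is a strictly descending chain $C_1 \supsetneq C_2 \supsetneq \cdots$ of closed sets of $X$. For each $n$ pick a witness $x_n \in C_n \setminus C_{n+1}$. Since $C_{n+1}$ is an arbitrary intersection of finite unions of prebasis elements and $x_n \notin C_{n+1}$, there exists one such finite union $F_n = B_{n,1} \cup \dots \cup B_{n,k_n}$ containing $C_{n+1}$ but missing $x_n$. For every $m > n$, $x_m \in C_m \subseteq C_{n+1} \subseteq F_n$, so $x_m$ falls into at least one $B_{n,j}$. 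I would then invoke iterated pigeonhole or König's lemma: pass to an infinite subsequence of $n$'s for which a single $B_{1,j_1}$ captures all subsequent $x_m$'s, recurse on that subsequence to pick $B_{2,j_2}$ covering the remaining tail, and inductively build a strictly descending chain $\beta_1 \supsetneq \beta_2 \supsetneq \cdots$ in $\mathcal{B}$, using intersections with previous terms to force nesting and the $x_n$'s to witness strict descent, contradicting DCC on $\mathcal{B}$.

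Granted the equivalence, the Noetherian consequences follow by a standard Noetherian induction on the now well-founded poset of closed subsets. For the decomposition of every closed set as a finite union of irreducibles, let $\mathcal{S}$ be the collection of closed sets \emph{not} so expressible. If $\mathcal{S}$ were nonempty, DCC would furnish a minimal element $C \in \mathcal{S}$; $C$ cannot be irreducible, so $C = C_1 \cup C_2$ with $C_1, C_2 \subsetneq C$ closed, and by minimality both $C_i$ lie outside $\mathcal{S}$, hence are each finite unions of irreducibles, making $C$ one as well --- contradicting $C \in \mathcal{S}$. Applying this to $X$ gives $X = X_1 \cup \dots \cup X_n$ with each $X_i$ irreducible; after discarding any $X_i \subseteq X_j$, the decomposition is irredundant. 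Any irreducible closed $Y \subseteq X$ then decomposes as $Y = \bigcup_i (Y \cap X_i)$, so by irreducibility $Y \subseteq X_i$ for some $i$, and the maximal irreducibles --- the irreducible components --- are exactly these finitely many $X_i$.

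The main obstacle is the converse direction of the first equivalence. A strict descent among finite unions of prebasis elements does not visibly contract to a strict descent inside $\mathcal{B}$ itself: the union $B \cup B'$ may properly contain $B'' \cup B'''$ without any of the $B$'s being nested. Thus the pigeonhole/König extraction must be engineered carefully so that the prebasis elements selected at successive stages actually nest --- the witnesses $x_n$ naturally play the role of certifying strict descent, but one has to commit to a prebasis piece containing \emph{all} future witnesses, not just the next one, and this is where the iterative subsequence refinement really earns its keep.
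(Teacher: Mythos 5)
The paper does not actually prove this proposition --- it is quoted from the reference (Remark 4.8 of Daniyarova--Myasnikov--Remeslennikov, \emph{Unification theorems in algebraic geometry}), so there is no in-paper proof to compare against. Your argument therefore has to stand on its own.

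The second half of your argument (Noetherian decomposition into irreducibles, irredundant refinement, and the fact that the survivors are precisely the irreducible components) is the standard well-founded induction and is correct.

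The first half, the implication ``DCC on $\mathcal{B}$ implies DCC on all closed sets,'' has a genuine gap, and you put your finger on exactly where. Your fix --- replace the possibly non-nested witnesses $\beta_1, \beta_2, \dots$ by $\gamma_l := \beta_1 \cap \dots \cap \beta_l$ to force nesting --- produces a strictly descending chain, but one that lives outside $\mathcal{B}$: a prebasis is not assumed closed under finite intersections, so $\gamma_l \in \mathcal{B}$ has no justification. This is not a cosmetic issue, because the statement as literally written is \emph{false} without some closure hypothesis on $\mathcal{B}$. Take $X$ infinite and $\mathcal{B} = \{\,X \setminus \{x\} : x \in X\,\}$: this is an antichain under inclusion, hence satisfies DCC vacuously, yet any two distinct members union to $X$, so the finite unions are only $\varnothing$, the members of $\mathcal{B}$, and $X$; arbitrary intersections of those give $X \setminus S$ for every $S \subseteq X$, i.e.\ the discrete topology, whose closed sets certainly violate DCC. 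What saves the proposition in every place the paper invokes it is that the prebasis is always $\operatorname{cl}(-)$ of a closure system and so is closed under \emph{arbitrary} intersections; once you add ``$\mathcal{B}$ is closed under finite intersections'' as a standing hypothesis, each $\gamma_l$ lies in $\mathcal{B}$, the witnesses $x_{n_{l+1}} \in \gamma_l \setminus \gamma_{l+1}$ give strict descent, and your pigeonhole/K\"onig extraction does in fact finish the proof. So the idea is right; it just needs the missing hypothesis to be stated and used.
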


Recal that a subset $Y \subseteq X$ is irreducible if, for any closed $U, V \subseteq X$, $Y \subseteq U \cup V$ implies that $Y \subseteq U$ or $Y \subseteq V$, and is an irreducible component if it is irreducible and maximal with that property.

Taking Definition 4.1 above, we can extend the notion of equationally Noetherian to arbitrary classes.
We fix a class contained in a variety $K \subseteq V$ and let $\varphi = \Phi_V(K)$.

\begin{definition}
    $K$ is equationally Noetherian if for any $T \subseteq \operatorname{At}(x_1, \dots, x_n)$ there is a finite subset $T_0 \subseteq T$ we have $K \models T\leftrightarrow T_0$, or equivalently $K\models T_0\rightarrow T$.
    We say that $\varphi$ is Noetherian if $K$ is equationally Noetherian.
\end{definition}

There are a couple elementary equivalent conditions of equational Noetherianness.

\begin{theorem} \label{theo:eqnoethequivcond}
    The following are equivalent:
    \begin{enumerate}
        \item $K$ is equationally Noetherian
        \item $\Spec^\varphi\mathbf{F}_V(x_1, \dots, x_n)$ satisfies the D.C.C. (descending chain condition) on Zariski closed sets for all $n$ or equivalently $A.C.C.$ (ascending chain condition) on the $\varphi$-radical congruences.
        \item $\Spec^\varphi\mathbf{F}_V(x_1, \dots, x_n)$ satisfies the D.C.C. on Zariski topologically closed sets for all $n$
        \item For any disjunctive system $S$ over a finite set of variables $X$ there is a finite subset $S_0 \subseteq S$ with $K\models S\leftrightarrow S_0$
        \item Condition 2. for any finitely generated algebra in $V$
        \item Condition 3. for any finitely generated algebra in $V$
    \end{enumerate}
\end{theorem}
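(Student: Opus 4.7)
The plan is to arrange the equivalences as a cycle $(1)\Rightarrow(2)\Rightarrow(3)\Rightarrow(4)\Rightarrow(1)$, together with two further biconditionals $(2)\Leftrightarrow(5)$ and $(3)\Leftrightarrow(6)$. Almost all of the real work is done by three results already established: the generalised Nullstellensatz (both parts), Proposition \ref{prop:noethtopology}, and Theorem \ref{theo:closedsetisspec}. The internal equivalence inside $(2)$ between DCC on Zariski closed sets and ACC on $\varphi$-radical congruences is immediate from the Nullstellensatz Part I, since $V_\varphi(-)$ and $\psi(-)$ restrict to an order-reversing bijection between Zariski closed sets and $\varphi$-radical congruences (every $\varphi$-radical $\theta$ equals $\rad^\varphi\theta=\psi(V_\varphi(\theta))$, and conversely every $\psi(X)$ is an intersection of $\varphi$-distinguished congruences).

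For $(1)\Rightarrow(2)$ I would take a descending chain $X_0\supseteq X_1\supseteq\dots$ of Zariski closed sets, form the ascending union of congruences $T=\bigcup_i\psi(X_i)$, use equational Noetherianness to extract a finite $T_0\subseteq T$ with $V_\varphi(T_0)=V_\varphi(T)$, and observe that $T_0\subseteq\psi(X_n)$ for some $n$, which forces $X_m=V_\varphi(T)$ for all $m\ge n$. Step $(2)\Rightarrow(3)$ is a direct application of Proposition \ref{prop:noethtopology}, since the Zariski closed sets are by definition a prebasis of the Zariski topology. For $(3)\Rightarrow(4)$ I would mirror the argument: given a disjunctive system $S$, the family $\{V_\varphi(S_0)\mid S_0\subseteq S\text{ finite}\}$ is closed under finite intersection, so by DCC on topologically closed sets it has a minimum $V_\varphi(S_0)$; minimality forces $V_\varphi(S_0\cup\{\Phi\})=V_\varphi(S_0)$ for every $\Phi\in S$, hence $V_\varphi(S)=V_\varphi(S_0)$, and Lemma \ref{lem:monotonicity} converts this to $K\models S\leftrightarrow S_0$. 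Finally, $(4)\Rightarrow(1)$ is trivial, as a set of equations is a degenerate disjunctive system in which every disjunction has a single disjunct.

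For $(2)\Leftrightarrow(5)$ and $(3)\Leftrightarrow(6)$, any finitely generated $\mathbf{B}\in V$ is a quotient $\mathbf{F}_V(x_1,\dots,x_n)/\theta$, and by Theorem \ref{theo:closedsetisspec} the spectrum $\Spec^\varphi\mathbf{B}$ is isomorphic as a closure system to the Zariski closed subset $V_\varphi(\theta)\subseteq\Spec^\varphi\mathbf{F}_V(x_1,\dots,x_n)$; DCC on Zariski (resp.\ topologically) closed subsets of the latter restricts to the former, and the converse directions are immediate because $\mathbf{F}_V(x_1,\dots,x_n)$ is itself finitely generated. The main place where care is required is the empty-set boundary in $(3)\Rightarrow(4)$, since the Nullstellensatz Part II restricts attention to nonempty topologically closed sets; however, Lemma \ref{lem:monotonicity} itself carries no such restriction, and if the minimising $V_\varphi(S_0)$ is empty then so is $V_\varphi(S)$, so the implication $K\models S\leftrightarrow S_0$ still reads as the common unsatisfiability statement and no genuine problem arises. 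This is essentially the only subtlety; the rest is routine bookkeeping with the Galois correspondence and the already established theorems.
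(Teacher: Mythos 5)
Your proposal is correct and takes essentially the same route as the paper: the same cycle $(1)\Rightarrow(2)\Rightarrow(3)\Rightarrow(4)\Rightarrow(1)$ with $(2)\Leftrightarrow(5)$ and $(3)\Leftrightarrow(6)$ via Theorem \ref{theo:closedsetisspec}, the same use of Proposition \ref{prop:noethtopology} for $(2)\Rightarrow(3)$, and the same minimal-element argument via Lemma \ref{lem:monotonicity} for $(3)\Rightarrow(4)$. The only cosmetic difference is that in $(1)\Rightarrow(2)$ you work on the closed-set side of the Galois correspondence (descending $X_i$, ascending $\psi(X_i)$) where the paper works directly with an ascending chain of $\varphi$-radical congruences; these are dual formulations of the same argument. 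Your closing remark about the $\varnothing$ boundary case in $(3)\Rightarrow(4)$ is a genuine point of care that the paper's proof leaves implicit, and you resolve it correctly by noting that Lemma \ref{lem:monotonicity} carries no nonemptiness hypothesis.
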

\begin{proof}
    $(1\Rightarrow 2)$
    Suppose that $K$ is equationally Noetherian, and consider then an ascending chain of $\varphi$-radical congruences
    $$\theta_0 \subseteq \theta_1 \subseteq \theta_2 \subseteq \dots $$
    and let $\theta = \rad^\varphi (\bigcup_{i \in \mathbb{N}} \theta_i)$ be their join in the lattice of $\varphi$-radical congruences.
    By the hypothesis and the generalised Nullstellensatz part II, there is a finite $T_0 \subseteq \bigcup_{i \in \mathbb{N}} \theta_i$ such that $\rad^\varphi T_0 = \theta$.
    But, as $T_0$ is finite, there must be some $n$ such that $T_0 \subseteq \theta_n$, so $T_0 \subseteq \theta_k$ for all $k\geq n$.
    $$\implies \rad^\varphi T_0 \subseteq \rad^\varphi \theta_k = \theta_k \subseteq \theta = \rad^\varphi T_0$$
    Indeed, the chain stabilises.\par
    $(3\Rightarrow 4)$ 
    Now, suppose that $\Spec^\varphi \mathbf{F}_V(x_1, \dots, x_n)$ satisfies the D.C.C. on Zariski topologically closed sets and let $S$ be a disjunctive system over $\{ x_1, \dots, x_n\}$.
    Consider the set of all $V_\varphi(S_*)$ for $S_* \subseteq S$ finite.
    By the descending chain condition, this set has a minimal element $V_\varphi(S_0)$.
    Now supposing any $\Phi \in S$, we have
    $$\implies V_\varphi(S_0) \cap V_\varphi(\{\Phi\}) \subseteq V_\varphi(S_0)$$
    But $V_\varphi(S_0)$ is minimal, so in fact $V_\varphi(S_0) \cap V_\varphi(\{\Phi\}) = V_\varphi(S_0)$
    $$\implies V_\varphi(S_0) \subseteq V_\varphi(\{ \Phi\})$$
    $$\implies K\models \bigwedge_{\Phi_0 \in S_0} \Phi_0 \rightarrow \Phi$$
    Thus $K\models \bigwedge_{\Phi_0 \in S_0} \Phi_0 \rightarrow \bigwedge_{\Phi \in S} \Phi$, and we are done.\par
    $(2 \Rightarrow 3)$ follows from the above proposition, and $(4\Rightarrow 1)$ is immediate.
    Finally, the equivalences $2\iff 5$ and $3\iff 6$ follow from Theorem \ref{theo:closedsetisspec}.
\end{proof}

\begin{corollary}
    $\varphi$ being Noetherian implies that $\sqrt{\varphi}$ is Noetherian.
\end{corollary}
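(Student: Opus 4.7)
The plan is to apply Theorem~\ref{theo:eqnoethequivcond}, specifically condition~(2), which characterises equational Noetherianness of a coherent condition $\psi$ on $V$ by the ascending chain condition on $\psi$-radical congruences of the finitely generated free algebras $\mathbf{F}_V(x_1, \dots, x_n)$. Since $\varphi$ and $\sqrt{\varphi}$ are both coherent conditions on the same variety $V$, the ambient free algebras coincide, so it suffices to match up the two ACC statements.

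The key observation is that the $\sqrt{\varphi}$-radical congruences of any algebra $\mathbf{A}$ coincide exactly with the $\varphi$-radical congruences. Indeed, $\Spec^{\sqrt{\varphi}}\mathbf{A} = \RSpec^\varphi\mathbf{A}$ by definition, and Proposition~\ref{prop:radfacts} tells us that $\RSpec^\varphi\mathbf{A}$ is already the set of closed elements of a closure operator and hence closed under arbitrary intersection; in particular, it equals its own meet-completion. (Equivalently, one may simply cite Theorem~\ref{theo:radfundclassisprevar}, which records the idempotence $\sqrt{\sqrt{\varphi}} = \sqrt{\varphi}$.)

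Combining these two points, the ACC on $\sqrt{\varphi}$-radical congruences of $\mathbf{F}_V(x_1, \dots, x_n)$ for every $n$ is literally the same statement as the ACC on $\varphi$-radical congruences of the same algebra, and the latter holds by hypothesis via Theorem~\ref{theo:eqnoethequivcond} applied to $\varphi$. Running that theorem in reverse for $\sqrt{\varphi}$ yields the desired Noetherianness. No substantive obstacle arises; the corollary is essentially an unpacking of the equivalent conditions in Theorem~\ref{theo:eqnoethequivcond} together with the idempotence of the $\sqrt{\cdot}$ operation.
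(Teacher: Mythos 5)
Your proof is correct and follows essentially the same route as the paper: both arguments reduce the claim to the observation that the $\sqrt{\varphi}$-radical congruences coincide with the $\varphi$-radical congruences (equivalently, $\sqrt{\sqrt{\varphi}} = \sqrt{\varphi}$), and then invoke the chain-condition characterisation in Theorem~\ref{theo:eqnoethequivcond}. The paper phrases this through the Galois correspondence between $\varphi$-radical congruences and Zariski closed sets from the Nullstellensatz part I, whereas you appeal directly to the A.C.C.\ side of condition~(2); these are the two equivalent formulations already bundled inside that condition, so the underlying argument is the same, and if anything your version makes explicit the identification of radicals that the paper leaves implicit.
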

\begin{proof}
    By the generalised Nullstellensatz part I, there is a Galois correspondence between $\varphi$-radical congruences and Zariski closed sets of $\Spec^\varphi\mathbf{A}$.
    Then, there is also a Galois correspondence between $\varphi$-radical congruences and Zariski closed sets of $\Spec^{\sqrt{\varphi}} \mathbf{A}$.\par
    We now use the above theorem to get the desired result.
\end{proof}

\begin{corollary}
    If $K$ is equationally Noetherian, then for any finitely generated algebra $\mathbf{A}$, $\Spec^\varphi\mathbf{A}$ is hereditarily compact (every subspace is compact).
\end{corollary}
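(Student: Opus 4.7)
The plan is to reduce the claim to the standard topological fact that Noetherian spaces are hereditarily compact, using item $6$ of Theorem \ref{theo:eqnoethequivcond}. Specifically, since $\mathbf{A}$ is finitely generated and $K$ is equationally Noetherian, $\Spec^\varphi\mathbf{A}$ satisfies the D.C.C.\ on Zariski topologically closed sets, so its Zariski topology is a Noetherian topological space.

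So first, I would invoke Theorem \ref{theo:eqnoethequivcond} to conclude that the Zariski topology on $\Spec^\varphi\mathbf{A}$ is Noetherian. Next, I would verify (or simply recall as a well-known fact, citing e.g.\ \cite{daniyarova2011algebraicgeometryalgebraicstructures}) that Noetherian topological spaces are hereditarily compact. The argument is standard: any subspace $Y\subseteq \Spec^\varphi\mathbf{A}$ inherits the D.C.C.\ on its relatively closed sets, because the relatively closed sets of $Y$ are precisely intersections of Zariski topologically closed sets with $Y$, and a strictly descending chain in $Y$ would pull back to a strictly descending chain in $\Spec^\varphi\mathbf{A}$. Hence every subspace is itself Noetherian.

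Finally, I would show that a Noetherian space is compact: given any open cover $\{U_i\}_{i\in I}$ of $Y$, pick $U_{i_1}$ arbitrarily, and inductively choose $U_{i_{n+1}}$ so that $U_{i_1}\cup\dots\cup U_{i_{n+1}}$ strictly enlarges $U_{i_1}\cup\dots\cup U_{i_n}$ whenever possible. By the A.C.C.\ on open sets (dual to D.C.C.\ on closed sets) this process terminates, yielding a finite subcover.

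The only real content here is Theorem \ref{theo:eqnoethequivcond}, which has already been done; the rest is bookkeeping about Noetherian topological spaces. The mildly delicate point is making sure subspaces of a Noetherian space really are Noetherian in the subspace topology — this is routine but worth writing out carefully since the Zariski closure system on $\Spec^\varphi\mathbf{A}$ has been topologized in a nonstandard way via the prebasis of Corollary \ref{cor:principalclosedsets}, so one should confirm that subspaces of the topologisation behave as expected.
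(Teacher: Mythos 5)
Your proposal is essentially the paper's own proof: invoke Theorem \ref{theo:eqnoethequivcond} to get Noetherianness of $\Spec^\varphi\mathbf{A}$ as a topological space, then apply the standard facts that subspaces of Noetherian spaces are Noetherian and that Noetherian spaces are compact. The extra worry about the prebasis is unnecessary — once the Zariski closure system has been topologized it is an ordinary topological space, so the standard subspace argument applies without qualification.
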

\begin{proof}
    If $K$ is equationally Noetherian, then by the above theorem the topological space $\Spec^\varphi\mathbf{A}$ is Noetherian (satisfies the A.C.C. on open sets) for any finitely generated $\mathbf{A}$.
    Then, any subspace of a Noetherian space is Noetherian, and any Noetherian space is compact, from which we get that $\Spec^\varphi\mathbf{A}$ is hereditarily compact.
\end{proof}

\begin{remark}
    In general, $\Spec^\varphi \mathbf{A}$ is compact whenever $\nabla_A$ is a compact element in the lattice of $\varphi$-radical congruences.
\end{remark}

Theorem \ref{theo:eqnoethequivcond} shows that we can define equational Noetherianness for coherent conditions without having to refer to or even look at its fundamental class or any class generating it.
For example, if $V$ is a Noetherian variety (one where any finitely generated algebra has finitely generated congruences) then $K$ is immediately also equationally Noetherian.
In particular, Hilbert's basis theorem says that $\mathbb{Z}[x_1, \dots, x_n]$ is Noetherian, so the variety of commutative rings is equationally Noetherian, and therefore every algebraically closed field is, which gives us the result from classical algebraic geometry.\par
Next, we prove a partial generalisation of Unification Theorem C from \cite{daniyarova2011algebraicgeometryalgebraicstructures}.

\begin{theorem} \label{theo:fingenISPisQ}
    For an equationally Noetherian class $K$ and $\mathbf{A} \in V$ finitely generated, and $\varphi = \Phi_V(K)$, the following are equivalent:
    \begin{enumerate}
        \item $\mathbf{A} \in Q(K)$, the quasivariety generated by $K$
        \item $\mathbf{A} \in ISP(K)$
        \item $\mathbf{A}$ is $\varphi$-reduced
        \item $\mathbf{A}$ is separated by $K$
    \end{enumerate}
\end{theorem}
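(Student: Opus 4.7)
The plan is to prove the four conditions equivalent via the cycle $(2) \Rightarrow (1) \Rightarrow (4) \Leftrightarrow (3) \Leftrightarrow (2)$, handling the essentially definitional parts first and isolating the one implication that actually uses equational Noetherianness.

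For the easy steps, $(2) \Rightarrow (1)$ is immediate since $Q(K)$ is closed under $I$, $S$, and $P$ and hence contains $ISP(K)$. For $(2) \Leftrightarrow (3)$, Theorem \ref{theo:radfundclassisprevar} gives $F(\sqrt{\varphi}) = ISP(K)$, while by definition $\mathbf{A}$ is $\varphi$-reduced iff $\Delta_A = \nil^\varphi\mathbf{A} \in \RSpec^\varphi\mathbf{A} = \Spec^{\sqrt{\varphi}}\mathbf{A}$, which says precisely $\mathbf{A} \in F(\sqrt{\varphi})$. For $(3) \Leftrightarrow (4)$, unwinding definitions, $\mathbf{A}$ is $\varphi$-reduced iff $\bigcap \Spec^\varphi \mathbf{A} = \Delta_A$, i.e.\ iff for each pair $a \neq b$ in $A$ there is some $\theta \in \Spec^\varphi\mathbf{A}$ with $\langle a, b\rangle \notin \theta$; since such a $\theta$ is exactly the kernel of a homomorphism $\mathbf{A} \to \mathbf{C}$ for some $\mathbf{C} \in IS(K)$, composing with the embedding into $K$ yields an honest separating map, recovering the definition of being separated by $K$.

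The real content lies in $(1) \Rightarrow (4)$. Since $\mathbf{A}$ is finitely generated, fix a surjection $\pi \colon \mathbf{F}_V(x_1, \dots, x_n) \twoheadrightarrow \mathbf{A}$ and let $T = \ker \pi$. By equational Noetherianness of $K$ one obtains a finite $T_0 \subseteq T$ with $K \models T \leftrightarrow T_0$. Given $a \neq b$ in $A$, choose preimages $p, q$ with $\pi(p) = a$ and $\pi(q) = b$, so $\langle p, q\rangle \notin T$. Suppose for contradiction that no homomorphism from $\mathbf{A}$ into a member of $K$ separates $a$ from $b$. Factoring through $\pi$, this means every homomorphism $\mathbf{F}_V(x_1, \dots, x_n) \to \mathbf{B} \in K$ whose kernel contains $T$ also contains $\langle p, q\rangle$, i.e.\ $K \models T \to p \approx q$, and therefore $K \models T_0 \to p \approx q$. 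This last formula is now a genuine quasi-identity; since $\mathbf{A} \in Q(K)$, it holds in $\mathbf{A}$. Evaluating at the generating assignment $x_i \mapsto \pi(x_i)$, $\mathbf{A}$ satisfies $T_0$ by construction, so it must also satisfy $p \approx q$ there, giving $\pi(p) = \pi(q)$ — a contradiction.

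The only substantive obstacle is this final implication, and the role of equational Noetherianness is quite specific: it lets us compress the potentially infinite presentation $T$ to a finite $T_0$ with the same consequences in $K$, converting the implication $K \models T \to p \approx q$ into an actual quasi-identity, which is exactly the form of statement the hypothesis $\mathbf{A} \in Q(K)$ is capable of transferring.
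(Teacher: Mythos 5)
Your proof is correct and follows essentially the same route as the paper: the substantive step in both cases is the same compression argument — use equational Noetherianness to replace the presentation $\ker\pi$ by a finite subsystem $T_0$, turning $K \models \ker\pi \to p\approx q$ into a genuine quasi-identity that $\mathbf{A} \in Q(K)$ must satisfy — stated by you as $(1)\Rightarrow(4)$ and by the paper as its contrapositive-in-disguise $(1)\Rightarrow(3)$. The only cosmetic difference is that you prove $(3)\Leftrightarrow(4)$ directly from the definitions where the paper delegates $(2)\Leftrightarrow(4)$ to Lemma 3.5 of Daniyarova--Myasnikov--Remeslennikov, making your write-up slightly more self-contained without changing the mathematics.
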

\begin{proof}
    The proofs of $(2) \iff (4)$ and $(2) \implies (1)$ are exhibited in Lemma 3.5 of \cite{Daniyarova2008UnificationTI}, and $(2)\iff (3)$ is simply Theorem \ref{theo:radfundclassisprevar}.\par
    All there is left to show is $(1) \implies (3)$.
    Suppose thus that $\mathbf{A} \in Q(K)$, and consider a surjection $h_{\vec{a}} \colon \mathbf{F}_V(x_1, \dots, x_n) \rightarrow \mathbf{A}$ sending $x_i$ to $a_i$, which exists as $\mathbf{A}$ is finitely generated.\par
    Suppose that $\langle p, q\rangle \in \rad^\varphi(\ker h_{\vec{a}})$
    $$\implies K \models \ker h_{\vec{a}} \rightarrow p\approx q$$
    But, as $K$ is equationally Noetherian, there is a finite subset $S_0 \subseteq \ker h$ such that $K \models S_0 \leftrightarrow \ker h_{\vec{a}}$
    $$\implies K \models S_0 \rightarrow p\approx q$$
    This is now a quasi-identity satisfied by $K$, and therefore also by $\mathbf{A}$.
    Of course, we have $\mathbf{A} \models S_0(\vec{a})$, and thus $p^\mathbf{A}(\vec{a}) = q^\mathbf{A}(\vec{a})$, which means that $\langle p, q \rangle \in \ker h_{\vec{a}}$.
    In other words: $\rad^\varphi(\ker h_{\vec{a}}) = \ker h_{\vec{a}}$, making $\mathbf{A}$ reduced as desired.
\end{proof}

We let $L_\omega(K)$ be the \textit{local closure} of $K$; the class of algebras where every finitely generated subalgebra is a member of $K$.
As $Q(K)$ is axiomised by finitary propositions, $L_\omega(Q(K)) = Q(K)$.
The following corollary is then immediate.

\begin{corollary}
    Let $K$ be an equationally Noetherian class.
    Then
    \begin{enumerate}
        \item $L_\omega(K)$ is equationally Noetherian
        \item $Q(K) = L_\omega(ISP(K))$
        \item $Q(K)$ is equationally Noetherian
    \end{enumerate}
\end{corollary}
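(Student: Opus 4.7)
My plan is to establish the three items in the order (2), (1), (3), since (3) depends on (1) and (2), and (1) is the only item requiring substantive work.

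For (2), observe that $ISP(K) \subseteq Q(K)$ because quasivarieties are closed under $I$, $S$, and $P$; combined with the fact noted in the paragraph preceding the statement that $L_\omega(Q(K)) = Q(K)$, monotonicity of $L_\omega$ immediately yields $L_\omega(ISP(K)) \subseteq Q(K)$. For the reverse inclusion, take any $\mathbf{A} \in Q(K)$ and any finitely generated subalgebra $\mathbf{B} \leq \mathbf{A}$; since $Q(K)$ is closed under subalgebras, $\mathbf{B} \in Q(K)$, and since $K$ is equationally Noetherian, Theorem \ref{theo:fingenISPisQ} upgrades this to $\mathbf{B} \in ISP(K)$. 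Hence $\mathbf{A} \in L_\omega(ISP(K))$.

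For (1), given any $T \subseteq \operatorname{At}(x_1, \dots, x_n)$, equational Noetherianness of $K$ produces a finite $T_0 \subseteq T$ with $K \models T \leftrightarrow T_0$. I claim this same $T_0$ works for $L_\omega(K)$. The direction $T \to T_0$ is automatic since $T_0 \subseteq T$, so it suffices to verify $L_\omega(K) \models T_0 \to T$. Given $\mathbf{A} \in L_\omega(K)$ and $\vec{a} \in A^n$ with $\mathbf{A} \models T_0(\vec{a})$, the subalgebra $\mathbf{B}$ of $\mathbf{A}$ generated by the coordinates of $\vec{a}$ is finitely generated, hence lies in $K$. Since term evaluation at $\vec{a}$ stays inside $\mathbf{B}$, the satisfaction statements $\mathbf{A} \models T_0(\vec{a})$ and $\mathbf{B} \models T_0(\vec{a})$ are literally the same equalities; applying $K \models T_0 \to T$ inside $\mathbf{B}$ gives $\mathbf{B} \models T(\vec{a})$, and the same locality of evaluation transfers this back to $\mathbf{A} \models T(\vec{a})$.

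For (3), I would chain together the preceding results. By the corollary following Theorem \ref{theo:eqnoethequivcond}, $\varphi$ being Noetherian implies $\sqrt{\varphi}$ is Noetherian; by Theorem \ref{theo:radfundclassisprevar}, $\sqrt{\varphi} = \Phi_V(ISP(K))$, so $ISP(K)$ is itself equationally Noetherian. Then item (1), applied with $ISP(K)$ in place of $K$, yields that $L_\omega(ISP(K))$ is equationally Noetherian, and item (2) identifies this class with $Q(K)$. The only nontrivial step in the whole argument is the locality of atomic satisfaction used in (1); everything else is bookkeeping against results already in the paper.
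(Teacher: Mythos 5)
Your proof is correct, and the structure for items (2) and (3) matches the paper's intent; the interesting divergence is in item (1). The paper disposes of (1) by noting that the $\Phi_V(K)$- and $\Phi_V(L_\omega(K))$-spectra of finitely generated algebras coincide and then invoking Theorem~\ref{theo:eqnoethequivcond}. Your argument is a direct semantic one: pass to the finitely generated subalgebra $\mathbf{B} \leq \mathbf{A}$ generated by $\vec{a}$, use $\mathbf{B} \in K$, and exploit that atomic satisfaction is absolute between $\mathbf{B}$ and $\mathbf{A}$. This is both more elementary and, in one respect, more careful than the paper's route: the paper's claim that the two spectra coincide on finitely generated algebras requires taking $IS(K)$ in place of $K$ (the inclusion $IS(K) \cap \{\text{f.g.}\} \subseteq IS(L_\omega(K))$ can fail otherwise), an issue your argument sidesteps entirely by only needing the easy containment --- finitely generated subalgebras of members of $L_\omega(K)$ belong to $K$ by definition. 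Your treatment of (2) is the same as the paper's (local closure of $Q(K)$ for one inclusion, Theorem~\ref{theo:fingenISPisQ} for the other). For (3), which the paper leaves implicit, your chain through $\sqrt{\varphi}$ Noetherian $\Rightarrow$ $ISP(K)$ equationally Noetherian $\Rightarrow$ $L_\omega(ISP(K)) = Q(K)$ equationally Noetherian is valid; a slightly shorter route is to observe that for each atomic $\epsilon \in T$ the formula $T_0 \to \epsilon$ is a quasi-identity, so $K \models T_0 \to T$ transfers immediately to $Q(K)$ by definition of $Q$, without routing through $ISP$.
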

\begin{proof}
    Note for 1. that, for any finitely generated algebra $\mathbf{A}$, the spectra of the coherent conditions generated by $K$ and $L_\omega(K)$ coincide.\par
    For the second, it is clear that $L_\omega(ISP(K)) \subseteq Q(K)$, as $Q(K)$ is locally closed.
    Conversely, if $\mathbf{A} \in Q(K)$, then by Theorem \ref{theo:fingenISPisQ} any finitely generated algebra of $\mathbf{A}$ is in $ISP(K)$, so $\mathbf{A} \in L_\omega(ISP(K))$.
\end{proof}

\section{Irreducible closed sets}

Again, throughout this section, let $V$ be a fixed variety, $K$ a class of algebras and $\varphi = \Phi_V(K)$ the corresponding coherent condition.

\subsection{Irreducible sets}

We begin with a high-level overview of irreducibility in arbitrary closure operators.
The usual definition translates naturally into that setting.

\begin{definition}
    Let $\langle X, C\rangle$ be a closure system.
    An \textit{irreducible subset} of $X$ is a nonempty subset $Y\subseteq X$ such that
    $$Y \subseteq \bigcup_{k=1}^n A_n \implies \exists k : Y\subseteq A_k$$
    for any closed sets $A_k$.
\end{definition}

The following proposition is obvious.

\begin{proposition}
    $Y\subseteq X$ is irreducible implies $C(Y)$ is irreducible.
\end{proposition}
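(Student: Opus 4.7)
The plan is to verify the two conditions in the definition of irreducibility directly for $C(Y)$, using only monotonicity and idempotency of the closure operator together with the irreducibility of $Y$ itself. No clever construction is needed; the proposition is essentially a routine transfer of the property from $Y$ to its closure.

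First I would observe that $C(Y)$ is nonempty, since by extensivity $Y \subseteq C(Y)$ and $Y$ is nonempty by hypothesis. Next, suppose $A_1, \dots, A_n$ are closed subsets of $X$ with $C(Y) \subseteq \bigcup_{k=1}^n A_k$. Since $Y \subseteq C(Y)$, we obtain $Y \subseteq \bigcup_{k=1}^n A_k$, and the irreducibility of $Y$ yields an index $k$ with $Y \subseteq A_k$. Applying $C$ to both sides and using monotonicity together with the fact that $A_k$ is closed (so $C(A_k) = A_k$), we conclude $C(Y) \subseteq C(A_k) = A_k$, which is exactly what is needed.

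There is no real obstacle here; the only subtle point is remembering that the definition of irreducibility is formulated only for coverings by closed sets, so the hypothesis $A_k \in \operatorname{cl}(C)$ is essential in order to pass from $Y \subseteq A_k$ back to $C(Y) \subseteq A_k$ via idempotency. Once that is noted, the argument is a two-line chain of inclusions.
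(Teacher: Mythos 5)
Your proof is correct and is precisely the routine argument the paper has in mind (the paper omits the proof, declaring the proposition ``obvious''): nonemptiness passes up via extensivity, and for a covering by closed sets one drops down to $Y$, invokes irreducibility of $Y$, and climbs back up using monotonicity and the fact that closed sets are fixed by $C$.
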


For topological spaces, there is an even weaker condition equivalent to irreducibility of a subset, relating to prebases.
This is useful for us as the Zariski topology is a topologization, and topologizations are defined using prebases.

\begin{proposition} \label{lem:topspaceirr}
    Let $X$ be a topological space.
    Then $Y \subseteq X$ is irreducible for a prebasis of closed sets $\mathcal{B}$ (the irreducibility condition but only for sets in the prebasis instead of any closed set) iff it is irreducible in $X$.
\end{proposition}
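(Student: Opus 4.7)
The plan is to handle the two directions of the biconditional separately. The forward direction, that irreducibility in $X$ implies irreducibility with respect to $\mathcal{B}$, is immediate since every element of $\mathcal{B}$ is closed, so the defining condition for irreducibility in $X$ applies in particular to finite covers by prebasis elements.

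For the converse, I would argue by contrapositive. Suppose $Y$ is irreducible with respect to $\mathcal{B}$, and let $A_1, \dots, A_n$ be closed sets with $Y \subseteq A_1 \cup \dots \cup A_n$; the goal is $Y \subseteq A_j$ for some $j$. Assume toward a contradiction that for every $j$ there is some $y_j \in Y \setminus A_j$. Now I would invoke the explicit description of topologically closed sets given after the definition of topologization: each $A_j$ is an intersection of finite unions of prebasis elements. Since $y_j \notin A_j$, there exists one such finite union $U_j = B_{j,1} \cup \dots \cup B_{j,m_j}$ of prebasis elements with $A_j \subseteq U_j$ but $y_j \notin U_j$.

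Putting the $U_j$ together, I get
\[
Y \;\subseteq\; A_1 \cup \dots \cup A_n \;\subseteq\; U_1 \cup \dots \cup U_n \;=\; \bigcup_{j=1}^{n}\bigcup_{k=1}^{m_j} B_{j,k},
\]
so $Y$ is covered by a finite union of prebasis elements. By the hypothesis that $Y$ is irreducible with respect to $\mathcal{B}$, there exist indices $j^*, k^*$ with $Y \subseteq B_{j^*, k^*}$. But then $y_{j^*} \in Y \subseteq B_{j^*, k^*} \subseteq U_{j^*}$, contradicting the choice of $U_{j^*}$.

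There is no real obstacle here; the only subtle point is remembering to use the concrete form of topologically closed sets (intersections of finite unions of prebasis elements) so that non-membership of a point lets us pass to a \emph{single} finite union of prebasis elements above each $A_j$. After that, the argument reduces to a straightforward finite cover manipulation together with one application of the prebasis-level irreducibility hypothesis.
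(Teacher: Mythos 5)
Your proof is correct. Both you and the paper rely on the same structural fact — that a topologically closed set is an intersection of finite unions of prebasis elements — but you deploy it differently. The paper expands \emph{both} $A$ and $B$ fully into such intersections, then considers all pairs $(i,j)$ in the product of the two index sets and applies prebasis-irreducibility to each of the resulting finite unions, doing a case split on whether every $i$ yields an $A_{i,x_i}\supseteq Y$. Your argument instead fixes witness points $y_j \in Y\setminus A_j$ and uses them to select, for each $A_j$, a \emph{single} finite union of prebasis elements $U_j \supseteq A_j$ avoiding $y_j$; one application of prebasis-irreducibility to $\bigcup_j U_j$ then produces the contradiction. This is cleaner, avoids the index-set product manipulation entirely, and handles arbitrary finite $n$ directly, whereas the paper's write-up treats only $n=2$ (leaving the general finite case to an unstated induction). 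One small terminological slip: you announce a ``contrapositive'' argument but what you actually run is a proof by contradiction; the content is fine either way.
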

\begin{proof}
    The backwards direction is obvious.\par
    $(\Rightarrow)$
    Suppose $Y \subseteq X$ is irreducible for a prebasis in $\mathcal{B}$, and
    $$Y \subseteq A \cup B$$
    for $A, B$ closed in $X$.
    $$\implies Y \subseteq (\bigcap_{i \in I} A_{i, 1} \cup \dots \cup A_{i, k_i}) \cup (\bigcap_{j \in J} B_{j, 1} \cup \dots \cup B_{j, l_j})$$
    for $A_{i, k}, B_{j, l}\in \mathcal{B}$
    $$\implies Y \subseteq \bigcap_{(i, j) \in I\times J} A_{i, 1} \cup \dots \cup A_{i, k_i} \cup B_{j, 1} \cup \dots \cup B_{j, l_j}$$
    Thus for any $i \in I$, $j \in J$
    $$Y \subseteq A_{i, 1} \cup \dots \cup A_{i, k_i} \cup B_{j, 1} \cup \dots \cup B_{j, l_j}$$
    If for all $i \in I$ there is an $1\leq x_i \leq k_i$ with $Y \subseteq A_{i, x_i}$, which means that $Y \subseteq A$ and we are done.
    In the other case, let $i \in I$ where this is not the case.
    By irreducibility in $X$ this means for all $j \in J$ there is an $1\leq y_j \leq l_j$ with $Y \subseteq B_{j, l_j}$.
    But this means that $Y \subseteq B$, so we are done.
\end{proof}

As the principal closed sets form a prebasis of the Zariski topology, we get for an arbitrary coherent condition $\varphi$ a nice corollary.

\begin{corollary} \label{col:irrsetzar}
    A subset $X \subseteq \Spec^\varphi\mathbf{A}$ is irreducible in the Zariski topology if and only if for all $a_i, b_i \in A$, $i = 1, \dots, n$:
    $$X \subseteq \bigcup_{i=1}^nV_\varphi(a_i, b_i) \implies \exists k \leq n : X \subseteq V_\varphi(a_k, b_k)$$
    if and only if it is irreducible in the Zariski closure system if and only if $\overline{X}$ and $X^{\operatorname{zc}}$ are irreducible.
\end{corollary}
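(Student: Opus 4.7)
The plan is to prove the chain of equivalences (irreducible in Zariski topology) $\Leftrightarrow$ (prebasis condition) $\Leftrightarrow$ (irreducible in Zariski closure system) $\Leftrightarrow$ ($\overline{X}$ and $X^{\operatorname{zc}}$ irreducible), by cycling through implications and relying almost entirely on Proposition \ref{lem:topspaceirr} and Corollary \ref{cor:principalclosedsets}.

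First I would note that by Corollary \ref{cor:principalclosedsets} the principal closed sets $V_\varphi(a, b)$ form a prebasis of the Zariski topology. So the prebasis condition stated in the corollary is exactly the hypothesis of Proposition \ref{lem:topspaceirr} applied to this prebasis, which immediately yields the equivalence between topological irreducibility and the prebasis condition.

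Next I would show that topological irreducibility and closure-system irreducibility coincide. For the easy direction, any Zariski closed set is topologically closed (it is already a prebasis element), and any finite union of topologically closed sets is topologically closed, so if $X \subseteq A_1 \cup \dots \cup A_n$ with each $A_k$ Zariski closed, the whole union is topologically closed and a simple induction on $n$ using the $n=2$ case of topological irreducibility yields $X \subseteq A_k$ for some $k$. The reverse direction is trivial since the prebasis condition (expressed in terms of principal Zariski closed sets) is a special case of closure-system irreducibility.

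For the final equivalence with $\overline{X}$ and $X^{\operatorname{zc}}$, I would use the standard pair of facts: a subset is irreducible iff its closure is, in both the topological and closure-system senses. The forward direction is the proposition stated just before Proposition \ref{lem:topspaceirr}. For the converse I would observe that whenever $X \subseteq A_1 \cup \dots \cup A_n$ with the $A_k$ (topologically or Zariski) closed, the right-hand side is itself (topologically) closed, hence contains $\overline{X}$ (resp.\ $X^{\operatorname{zc}}$), so irreducibility of the closure transfers back to $X$. Combined with the equivalence between the topological and closure-system notions established above, this gives the full four-way equivalence.

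I do not anticipate a genuine obstacle here; the only mild subtlety is the asymmetry between finite and arbitrary unions of Zariski closed sets, which is why lifting to the topology (where finite unions of closed sets remain closed) is essential for the step from topological irreducibility to closure-system irreducibility.
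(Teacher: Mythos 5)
Your overall plan is sound and tracks the paper's implicit reasoning: Corollary~\ref{cor:principalclosedsets} gives the prebasis, Proposition~\ref{lem:topspaceirr} gives the equivalence of topological and prebasis irreducibility, and the unnamed proposition before it (``irreducible implies closure irreducible'') supplies the forward half of the closure equivalence. The cycle (Zariski closure system irreducibility) $\Rightarrow$ (prebasis condition) $\Rightarrow$ (topological irreducibility) $\Rightarrow$ (closure-system irreducibility) is exactly the clean way to resolve the asymmetry you correctly identify between finite unions in the closure system versus the topology.

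However, one of your claims is false in general and should be dropped. You assert that ``a subset is irreducible iff its closure is, in both the topological and closure-system senses,'' and justify the converse by saying the union $A_1\cup\dots\cup A_n$ of Zariski closed sets ``contains $\overline{X}$ (resp.\ $X^{\operatorname{zc}}$).'' It contains $\overline{X}$, because it is topologically closed; but it need not be Zariski closed, and $X^{\operatorname{zc}}$ is the least \emph{Zariski} closed superset of $X$, which may strictly contain $\overline{X}$. In a general closure system, ``$C(Y)$ irreducible $\Rightarrow Y$ irreducible'' fails: take $X=\{a,b,c\}$ with closed sets $\varnothing,\{a\},\{b\},\{a,b,c\}$; then $Y=\{a,b\}$ has $C(Y)=X$ irreducible while $Y\subseteq\{a\}\cup\{b\}$ is not. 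So the implication ``$X^{\operatorname{zc}}$ irreducible $\Rightarrow X$ irreducible'' cannot be argued this way. Fortunately this does not break the corollary, because the statement bundles $\overline{X}$ and $X^{\operatorname{zc}}$ together in the last condition: for the direction (both closures irreducible) $\Rightarrow$ ($X$ irreducible), you only need the $\overline{X}$ half, which does go through by the standard topological argument; the $X^{\operatorname{zc}}$ half is used only in the other direction, where the forward proposition suffices. Simply delete the ``(resp.\ $X^{\operatorname{zc}}$)'' branch of your converse argument.
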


It follows that maximal irreducible sets in the Zariski topology are Zariski closed, as
$$X\subseteq \overline{X} \subseteq X^{\text{zc}}$$
We shall now show that the study of irreducible Zariski closed sets can be reduced to the study of irreducible spectra.

\begin{proposition}
    If $Y \subseteq X$, then $Y$ is irreducible in $X$ if and only if $Y$ is in the corresponding subclosure system $\hat{Y}$.
\end{proposition}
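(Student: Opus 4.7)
The plan is to exploit the fact that, by definition of the subclosure system, the closed sets of $\hat{Y}$ are precisely the traces $Y \cap A$ of closed sets $A$ of $X$ on $Y$. Once one has that description, the two irreducibility conditions become almost literal translations of each other, so the proof reduces to unpacking definitions.

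First I would note that the nonemptiness clause is symmetric, so it suffices to handle the finite union condition. For the forward direction, assume $Y$ is irreducible in $X$, and take closed sets $B_1, \dots, B_n \subseteq \hat{Y}$ with $Y \subseteq \bigcup_k B_k$. Write $B_k = Y \cap A_k$ with each $A_k$ closed in $X$; then $Y \subseteq \bigcup_k A_k$, so irreducibility of $Y$ in $X$ yields some $k$ with $Y \subseteq A_k$, and hence $Y = Y \cap A_k = B_k$. For the converse, assume $Y$ is irreducible in $\hat Y$ and take closed sets $A_1, \dots, A_n \subseteq X$ with $Y \subseteq \bigcup_k A_k$; set $B_k = Y \cap A_k$, which is closed in $\hat Y$. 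Then $Y \subseteq \bigcup_k B_k$, so by hypothesis $Y \subseteq B_k$ for some $k$, and in particular $Y \subseteq A_k$.

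The only genuinely necessary observation is the description of closed sets of the subclosure system; everything else is a rewriting. There is no real obstacle, which matches the role of the statement in the paper: it is a bookkeeping lemma justifying the subsequent reduction of irreducible Zariski closed sets to irreducible spectra via Theorem \ref{theo:closedsetisspec}.
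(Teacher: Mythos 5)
Your proof is correct and follows essentially the same route as the paper: it identifies the closed sets of $\hat{Y}$ as the traces $Y\cap A$ for $A$ closed in $X$, then uses $Y\subseteq Y\cap S \iff Y\subseteq S$ together with distributivity of intersection over finite unions. The only difference is that you spell out the two directions explicitly, where the paper leaves that unpacking implicit.
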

\begin{proof}
    We note that the closed sets of $\hat{Y}$ are precisely $Y\cap A$ where $A$ is closed in $X$.
    Then the desired result follows from the fact that
    $$Y \subseteq Y \cap S \iff Y = Y\cap S \iff Y\subseteq S$$
    for any $S \subseteq X$, and the fact that $Y\cap (A \cup B) = (Y\cap A) \cup (Y\cap B)$.
\end{proof}

By Theorem \ref{theo:closedsetisspec} it then follows that irreducible Zariski closed sets indeed correspond to irreducible spectra of reduced algebras.

\begin{proposition}
    Let $f \colon X \rightarrow Y$ be a quasi-isomorphism of closure systems.
    In particular, the quasi-isomorphism $\alpha \colon \mathbb{A}^k(A) \rightarrow \Spec^{\Phi_\mathfrak{F}(\mathbf{A})} \mathbf{T}(x_1, \dots, x_n)$.
    It is then the case that a closed set $A \subseteq X$ is irreducible iff $f(A)$ is irreducible.
\end{proposition}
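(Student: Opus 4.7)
The plan is to leverage Proposition~\ref{prop:qiisnice}, which tells us that for a quasi-isomorphism $f \colon X \to Y$ the maps $F \colon B \mapsto f^{-1}(B)$ and $G \colon A \mapsto f(A)$ are mutually inverse bijections between the closed sets of $Y$ and those of $X$. Since both $f$ and $f^{-1}$ preserve arbitrary unions (preimages do in general, and direct images under a function do as well), both $F$ and $G$ preserve unions of closed sets, and both are visibly inclusion-preserving. Nonemptiness is also preserved in both directions: $f(A) = \varnothing \iff A = \varnothing$. These are the only ingredients I would need.

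For the forward direction, suppose $A \subseteq X$ is irreducible and that $f(A) \subseteq B_1 \cup \dots \cup B_n$ with each $B_i$ closed in $Y$. I would apply $F$ to both sides: since $F$ is a morphism of closure systems each $F(B_i)$ is closed in $X$, and $F(f(A)) = F(G(A)) = A$ by Proposition~\ref{prop:qiisnice}(1). Thus $A \subseteq F(B_1) \cup \dots \cup F(B_n)$, so irreducibility of $A$ gives $A \subseteq F(B_k)$ for some $k$; applying $G$ and using $G \circ F = \mathrm{id}$ on closed sets yields $f(A) = G(A) \subseteq G(F(B_k)) = B_k$.

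For the reverse direction, suppose $f(A)$ is irreducible (so in particular $A \neq \varnothing$) and that $A \subseteq A_1 \cup \dots \cup A_n$ with each $A_i$ closed in $X$. Applying $G$ gives $f(A) \subseteq f(A_1) \cup \dots \cup f(A_n)$, and each $f(A_i)$ is closed in $Y$ because $f$ maps closed sets to closed sets. By irreducibility of $f(A)$ we obtain $f(A) \subseteq f(A_k)$ for some $k$, and then applying $F$, together with $F \circ G = \mathrm{id}$ on closed sets (which holds because $F$ and $G$ are mutual inverses), gives $A = F(f(A)) \subseteq F(f(A_k)) = A_k$.

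I do not anticipate any real obstacle: the whole argument is essentially bookkeeping made possible by Proposition~\ref{prop:qiisnice}. The only subtlety worth flagging explicitly in the write-up is that $G$ is well-defined as a map between lattices of closed sets precisely because quasi-isomorphisms send closed sets to closed sets, which is what lets us treat $F$ and $G$ as honest mutual inverses and conclude that both $F(G(A)) = A$ and $G(F(B)) = B$ on closed sets.
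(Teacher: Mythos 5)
Your proof is correct and takes essentially the same approach as the paper, which simply observes that the direct image and preimage give a union-preserving bijection between closed sets and leaves the rest as a trivial consequence; your write-up spells out the same bookkeeping in full.
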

\begin{proof}
    The proof trivially follows from the fact that the direct and preimage give a bijection between the closed sets, and that they both preserve unions.
\end{proof}

Lastly we return to Noetherian spaces and give an extension of Proposition \ref{prop:noethtopology}.

\begin{proposition} \label{prop:uniqueirrdec}
    Let $X$ be a Noetherian topological space (it satisfies the DCC on closed sets), then it has a unique (up to ordering) minimal decomposition into irreducibles $X = C_1 \cup \dots \cup C_n$, where $C_1, \dots, C_n$ are the irreducible components.
\end{proposition}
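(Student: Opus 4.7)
The plan is to prove existence of a minimal (irredundant) decomposition, then uniqueness up to ordering, and finally that the constituents are precisely the irreducible components of $X$.

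For existence, Proposition \ref{prop:noethtopology} already provides a finite decomposition $X = Y_1 \cup \dots \cup Y_m$ into irreducible closed sets. I would then iteratively discard any $Y_i$ that happens to be contained in some $Y_j$ with $j \neq i$; since the collection is finite the process terminates, yielding an irredundant decomposition $X = C_1 \cup \dots \cup C_n$ in which no $C_i$ sits inside any other $C_j$.

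For uniqueness, suppose $X = C_1 \cup \dots \cup C_n = D_1 \cup \dots \cup D_m$ are two irredundant decompositions into irreducible closed sets. For each $i$ I would write $C_i = \bigcup_j (C_i \cap D_j)$; irreducibility of $C_i$ forces $C_i \subseteq D_{\sigma(i)}$ for some index $\sigma(i)$. Applying the same argument to $D_{\sigma(i)}$ produces $i'$ with $D_{\sigma(i)} \subseteq C_{i'}$, so $C_i \subseteq C_{i'}$, and irredundancy of the first decomposition forces $i' = i$. Hence $C_i = D_{\sigma(i)}$, and the same reasoning with the roles reversed shows $\sigma$ is a bijection, giving uniqueness up to reordering.

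To identify the $C_i$ with the irreducible components, I would fix $C_i$ and let $C$ be any irreducible closed set containing it. Then $C = \bigcup_j (C \cap C_j)$ forces $C \subseteq C_j$ for some $j$, so $C_i \subseteq C_j$, and irredundancy gives $j = i$ and $C = C_i$. This makes $C_i$ maximal among irreducible closed subsets. Since the closure of any irreducible subset is again irreducible (and contains it), maximality among irreducible closed subsets coincides with maximality among all irreducible subsets, so the $C_i$ are exactly the irreducible components. The only real subtlety is this last passage from closed to arbitrary irreducible subsets, but it is a one-line observation from the closure-preserves-irreducibility fact rather than a genuine obstacle; the rest is bookkeeping on the finite lattice of closed sets.
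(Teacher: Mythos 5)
Your proof is correct, and it takes a mildly different route from the paper's. The paper begins directly with the cover of $X$ by its irreducible components (supplied by Proposition \ref{prop:noethtopology}), then shows this cover is irredundant (a component contained in the union of the others would, by irreducibility, lie inside one of them, contradicting maximality) and that any other minimal decomposition into irreducibles must, by the same irreducibility-plus-maximality play, be exactly this one. You instead start from an \emph{arbitrary} finite decomposition into irreducibles, pare it down to an irredundant one, prove uniqueness of irredundant decompositions as a self-contained step, and only afterwards identify the constituents with the components. Your version buys a slightly cleaner logical separation (existence, uniqueness of irredundant decompositions, identification with components are three independent lemmas) and makes transparent why the $C_i$ are forced to be maximal; the paper's version is terser because it front-loads the fact that components already cover $X$. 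One small thing you leave to the reader: you show each $C_i$ \emph{is} an irreducible component (maximal), but not explicitly that \emph{every} component appears among the $C_i$. This follows in one line — a component $D$ satisfies $D \subseteq \bigcup_i C_i$, so by irreducibility $D \subseteq C_i$ for some $i$, and maximality of $D$ gives $D = C_i$ — and is worth stating to fully close the "the $C_i$ are the irreducible components" claim.
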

\begin{proof}
    By Proposition \ref{prop:noethtopology} $X$ can be written as the union $C_1 \cup \dots \cup C_n$ of irreducible components.
    First we will show maximality of this decomposition.
    Suppose WLOG that $C_n \subseteq C_1 \cup \dots \cup C_{n-1}$.
    Then, by irreducibility, $C_n \subseteq C_k$ for some $k$, which is obviously a contradiction as these are maximal irreducible subsets, so the decomposition is minimal.\par
    Consider now another minimal decomposition $X = A_1 \cup \dots \cup A_m$ into irreducibles. Then for any irreducible component $C_k$ we have $C_k \subseteq A_1 \cup \dots \cup A_m$, and so $C_k = A_l$ for some $l$, by maximality and irreducibility of $C_k$.
    Thus $m \geq n$, and every $C_k$ appears in the given decomposition. 
    This decomposition is also minimal, however, so every $A_l$ must be some $C_k$ as the irreducible components already cover $X$.
    This proves the proposition.
\end{proof}

\subsection{Irreducible spectra of $\varphi$-reduced algebras}

Corollary \ref{col:irrsetzar} gave a definition for a closed in terms of principal Zariski closed sets.
It will be apparent, though, that this is not necessarily useful to connect the irreducibility of the $\varphi$-spectrum to properties of the $\varphi$-reduction.
For this we need the distinguished open sets.

\begin{definition}
    Let $a_0, a_1 \in A$.
    Then the \textit{distinguished open set} of $(a_i, b_i)$ is the set
    $$D^\varphi(a_i, b_i) := \{ \theta \in \Spec^\varphi\mathbf{A} \mid \langle a_0, a_1\rangle \notin \theta \}$$
\end{definition}

Its clear that $D^\varphi(a_i, b_i) = \Spec^\varphi\mathbf{A} - V_\varphi(a_0, a_1)$, and that it is the set of kernels of homomorphisms into $K$ which separate $a_0$ and $a_1$.
We can then reframe the definition of an irreducible spectrum into a very convenient one with distinguished open sets.

\begin{lemma} \label{lem:irreduciblereducedequivcond}
    $\Spec^\varphi\mathbf{A}$ is irreducible and $\mathbf{A}$ is $\varphi$-reduced if and only if any finite intersection of distinguished open sets $D_\varphi(a_i, b_i)$ for $a_i\neq b_i$ is nonempty.
\end{lemma}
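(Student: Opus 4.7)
The plan is to rephrase both conditions on the left-hand side in terms of distinguished open sets and then invoke Proposition \ref{lem:topspaceirr} plus the standard reformulation of irreducibility via open sets. The key observation at the outset is that $D^\varphi(a,b) \neq \varnothing$ iff $\langle a,b\rangle \notin \bigcap \Spec^\varphi\mathbf{A} = \nil^\varphi\mathbf{A}$; in particular $D^\varphi(a,a) = \varnothing$ always, so if $D^\varphi(a_i,b_i)$ is nonempty we automatically get $a_i \neq b_i$.

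For the forward direction, assume $\Spec^\varphi\mathbf{A}$ is irreducible and $\mathbf{A}$ is $\varphi$-reduced, and let $a_i \neq b_i$ for $i = 1,\dots,n$. Reducedness means $\nil^\varphi\mathbf{A} = \Delta_A$, so no pair $\langle a_i,b_i\rangle$ lies in every $\varphi$-distinguished congruence, whence each $D^\varphi(a_i,b_i)$ is nonempty. Since irreducibility of a topological space is equivalent to any two (hence, by straightforward induction, any finite collection of) nonempty open sets having nonempty intersection, $\bigcap_i D^\varphi(a_i,b_i) \neq \varnothing$.

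For the reverse direction, first apply the hypothesis to a single pair $a \neq b$: this gives $D^\varphi(a,b) \neq \varnothing$, so $\langle a,b\rangle \notin \bigcap \Spec^\varphi\mathbf{A}$. As this holds for every $a \neq b$, we get $\nil^\varphi\mathbf{A} = \Delta_A$, i.e.\ $\mathbf{A}$ is $\varphi$-reduced. For irreducibility, by Proposition \ref{lem:topspaceirr} applied to the prebasis of closed sets $\{V_\varphi(a,b)\}$, it suffices to show that whenever $\Spec^\varphi\mathbf{A} \subseteq V_\varphi(a_1,b_1) \cup \dots \cup V_\varphi(a_n,b_n)$, the whole spectrum lies in one of the $V_\varphi(a_k,b_k)$. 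Taking complements, this is the same as: if every $D^\varphi(a_i,b_i)$ is nonempty then $\bigcap_i D^\varphi(a_i,b_i)$ is nonempty. Now by the opening observation, nonemptiness of $D^\varphi(a_i,b_i)$ forces $a_i \neq b_i$, so the hypothesis applies and delivers exactly this conclusion.

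The only subtle step is the appeal to Proposition \ref{lem:topspaceirr} in its open-set (contrapositive) form; once that is in place everything else is a direct unpacking of the definitions, so there is no real obstacle.
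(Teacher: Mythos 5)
Your proof is correct and follows essentially the same line as the paper's: translate between complements of $V_\varphi(a,b)$ and the distinguished opens $D^\varphi(a,b)$, use $\varphi$-reducedness to guarantee that each individual $D^\varphi(a,b)$ with $a\neq b$ is nonempty, and use Proposition \ref{lem:topspaceirr} to reduce irreducibility to a condition on the prebasis of principal closed sets. The only presentational difference is that you run the forward direction through the ``finitely many nonempty opens have nonempty intersection'' characterisation of irreducibility rather than by contradiction, and you make the contrapositive explicit in the reverse direction where the paper says ``repeat the argument''; the substance is identical.
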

\begin{proof}
    First note that
    \begin{align*}
        \bigcap_{i=1}^n D^\varphi(a_i, b_i) = \varnothing &\iff \Spec^\varphi \mathbf{A} - \bigcap_{i=1}^n D^\varphi(a_i, b_i) = \Spec^\varphi\mathbf{A}\\
        &\iff \bigcup_{i=1}^n (\Spec^\varphi\mathbf{A} - D^\varphi(a_i, b_i)) = \Spec^\varphi\mathbf{A}\\
        &\iff \bigcup_{i=1}^n V_\varphi(a_i, b_i) = \Spec^\varphi\mathbf{A}
    \end{align*}
    $(\Rightarrow)$
    Suppose that $\Spec^\varphi\mathbf{A}$ is irreducible and $\mathbf{A}$ is $\varphi$-reduced, and by way of contradiction that $\bigcap_{i=1}^n D^\varphi(a_i, b_i) = \varnothing$ for $a_i\neq b_i$.
    $$\implies \bigcup_{i=1}^n V_\varphi(a_i, b_i) = \Spec^\varphi\mathbf{A}$$
    $$\implies \exists k \leq n : \Spec^\varphi\mathbf{A} \subseteq V_\varphi(a_k, b_k)$$
    $$\implies \langle a_k, b_k\rangle \in \nil^\varphi\mathbf{A} = \Delta_A$$
    But this is a contradiction, as $a_i \neq b_i$ for all $i$, so the original intersection must be empty.\par
    $(\Leftarrow)$
    Now suppose that any finite intersection of distinguished subsets $D^\varphi(a_i, b_i)$ with $a_i\neq b_i$ is nonempty.
    This, in particular, means that $D^\varphi(a, b)$ is nonempty if $a\neq b$, and therefore $\nil^\varphi\mathbf{A} = \Delta_A$.
    Then one can then repeat the argument above to see that this also implies irreducibility.
\end{proof}

This is a really powerful characterisation.
The intersection of $D^\varphi(a_i, b_i)$ for $a_i\neq b_i$ always being nonempty is equivalent to the property that, for any finite family of pairs $\langle a_i, b_i\rangle$, there exists a homomorphism into $K$ separating all of them, i.e., that the algebra is $n$-separated by $K$.

\begin{theorem}
    $\mathbf{A}$ is $n$-separated by $K$ if and only if it is $\varphi$-reduced and has an irreducible spectrum.
\end{theorem}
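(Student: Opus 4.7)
The plan is to deduce this theorem almost directly from Lemma \ref{lem:irreduciblereducedequivcond}, which already establishes that $\mathbf{A}$ being $\varphi$-reduced with irreducible spectrum is equivalent to every finite intersection $\bigcap_{i=1}^n D^\varphi(a_i, b_i)$ with $a_i \neq b_i$ being nonempty. All that remains is to recognise that this nonemptiness is exactly a reformulation of $n$-separation, and so the main work is just a translation between $\varphi$-distinguished congruences and homomorphisms into $K$.

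First I would unpack the distinguished open set: $\theta \in D^\varphi(a, b)$ means $\theta$ is $\varphi$-distinguished and $\langle a, b\rangle \notin \theta$. Since $\varphi = \Phi_V(K)$, being $\varphi$-distinguished gives $\mathbf{A}/\theta \in IS(K)$, so composing the natural projection with an embedding into some $\mathbf{B} \in K$ produces a homomorphism $h \colon \mathbf{A} \to \mathbf{B}$ with $\ker h = \theta$. The condition $\langle a, b\rangle \notin \theta$ then reads $h(a) \neq h(b)$. Conversely, any homomorphism $h \colon \mathbf{A} \to \mathbf{B} \in K$ has $\ker h \in \Spec^\varphi \mathbf{A}$, and $h(a) \neq h(b)$ is equivalent to $\ker h \in D^\varphi(a, b)$. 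Consequently, $\theta \in \bigcap_{i=1}^n D^\varphi(a_i, b_i)$ precisely when $\theta$ is the kernel of a single homomorphism into $K$ that separates every pair $\langle a_i, b_i\rangle$ simultaneously.

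With this dictionary in hand both directions are immediate. For $(\Rightarrow)$, if $\mathbf{A}$ is $n$-separated by $K$, then for any distinct pairs $a_i \neq b_i$ there is a homomorphism into $K$ separating them all; its kernel witnesses that the intersection $\bigcap_{i=1}^n D^\varphi(a_i, b_i)$ is nonempty, so Lemma \ref{lem:irreduciblereducedequivcond} yields $\varphi$-reducedness and irreducibility of the spectrum. For $(\Leftarrow)$, given that $\mathbf{A}$ is $\varphi$-reduced with irreducible spectrum, Lemma \ref{lem:irreduciblereducedequivcond} supplies a congruence in $\bigcap_{i=1}^n D^\varphi(a_i, b_i)$, which translates back into a separating homomorphism via the above recipe.

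There is no genuine obstacle here — the proof is essentially the same translation used in Lemma \ref{lem:closedsetisevalmapker}. The only point that bears mentioning is how to read ``separated pairs $a_i \neq b_i$'' in the definition of $n$-separation: as the remark following Lemma \ref{lem:irreduciblereducedequivcond} makes clear, it is taken to quantify over all finite families of distinct pairs, and hence $n$-separation applied to a single pair already forces separability, coinciding with $\varphi$-reducedness.
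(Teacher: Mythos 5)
Your proof is correct and follows exactly the route the paper takes: the paper presents this theorem as an immediate consequence of Lemma \ref{lem:irreduciblereducedequivcond} together with the remark preceding it (identifying nonemptiness of $\bigcap_{i=1}^n D^\varphi(a_i, b_i)$ with the existence of a single homomorphism into $K$ separating all the pairs), and you have simply spelled out the dictionary between $\varphi$-distinguished congruences and homomorphisms into $K$ that the paper leaves implicit. Your closing observation that the single-pair case of $n$-separation coincides with $\varphi$-reducedness is also accurate and consistent with how the paper uses the two notions.
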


To connect this to algebras discriminated by $K$, we notice the following fact:

\begin{lemma} \label{prop:disisgensep}
    $K$ discriminates $\mathbf{A}$ if and only if $K$ $n$-separates $\mathbf{A}$ for all $n$.
\end{lemma}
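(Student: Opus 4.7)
The plan is to prove the equivalence directly from the definitions by matching finite families of separated pairs with finite subsets in the obvious way.

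For the forward direction, I assume that $K$ discriminates $\mathbf{A}$. Given any finite collection of separated pairs $a_1 \neq b_1, \dots, a_n \neq b_n$ in $A$, I would set $W = \{a_1, b_1, \dots, a_n, b_n\}$, which is a finite subset of $A$. By discrimination, there is a homomorphism $h$ into some member of $K$ whose restriction to $W$ is injective. Since $a_i \neq b_i$ and both lie in $W$, this gives $h(a_i) \neq h(b_i)$ for all $i$, so $\mathbf{A}$ is $n$-separated by $K$ (for this $n$, and hence for all $n$ since the argument is uniform).

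For the reverse direction, I assume $\mathbf{A}$ is $n$-separated by $K$ for every $n$. Given any finite $W \subseteq A$, I would enumerate all (finitely many) pairs $\langle w, w' \rangle$ with $w, w' \in W$ and $w \neq w'$. Applying $n$-separation to this finite family yields a homomorphism $h$ into $K$ which separates every such pair, meaning $h(w) \neq h(w')$ whenever $w \neq w'$ in $W$. This is precisely the statement that $h$ restricted to $W$ is injective, so $K$ discriminates $\mathbf{A}$.

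There is no real obstacle here; the proof is a straightforward translation between the ``pair-wise'' and ``subset-wise'' formulations, with the only mild observation being that a finite subset $W$ produces only finitely many pairs of distinct elements (namely at most $\binom{|W|}{2}$), so that the hypothesis of $n$-separation applies.
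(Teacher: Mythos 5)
Your proof is correct and follows essentially the same route as the paper: in the forward direction you take $W$ to be the set of all entries in the given pairs and invoke discrimination; in the reverse direction you enumerate the finitely many unequal pairs in $W$ and apply $n$-separation. The only cosmetic difference is that you spell out the counting observation $\binom{|W|}{2}$, which the paper leaves implicit.
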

\begin{proof}
    Indeed, if $K$ discriminates $\mathbf{A}$ and $a_i\neq b_i \in A$ for $1\leq i\leq n$, then there must be a homomorphism pairwise separating $W = \{ a_1, b_1, \dots, a_n, b_n\}$.\par
    Conversely, if $W = \{ a_1, \dots, a_n\} \subseteq A$ then we get the pairs
    $a_i \neq a_j \; \forall i\neq j$
    for which there must be a homomorphism into $K$ separating them.
    This means it pairwise separates $W$, and is therefore injective when restricted to $W$.
\end{proof}

And we can immediately prove the theorem.

\begin{theorem} \label{theo:irrsetisprime}
    The following are equivalent:
    \begin{enumerate}
        \item $\mathbf{A}$ is $\varphi$-reduced and has an irreducible $\varphi$-spectrum
        \item For any $a_i\neq b_i$, we have $\bigcap_{i=1}^n D^\varphi(a_i, b_i) \neq \varnothing$
        \item $\mathbf{A} \in \mathbf{Sep}_\omega(K)$
        \item $\mathbf{A} \in \mathbf{Dis}(K)$
    \end{enumerate}
\end{theorem}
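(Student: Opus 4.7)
The plan is to chain the equivalences through conditions $(2)$ and $(3)$, leveraging the two lemmas immediately preceding the theorem so that essentially no new work is needed beyond translating between the language of distinguished open sets and the separation conditions.

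First, I would observe that $(1) \iff (2)$ is exactly the content of Lemma \ref{lem:irreduciblereducedequivcond}, so nothing new is required there. Next, for $(2) \iff (3)$, the crucial observation already noted in the excerpt is that $D^\varphi(a,b)$ consists precisely of those $\varphi$-distinguished congruences $\theta$ arising as kernels of homomorphisms $h \colon \mathbf{A} \to \mathbf{C} \in IS(K)$ that separate $a$ and $b$ (i.e.\ $h(a) \neq h(b)$). Thus a congruence $\theta$ lies in $\bigcap_{i=1}^n D^\varphi(a_i,b_i)$ if and only if $\theta = \ker h$ for some homomorphism $h$ into a member of $IS(K)$ which separates every pair $(a_i,b_i)$ simultaneously. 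Since embedding further into a member of $K$ preserves separation, the nonemptiness of this intersection is precisely the statement that $\mathbf{A}$ is $n$-separated by $K$ in the sense of the definition; quantifying over all finite families of separated pairs $a_i \neq b_i$ then gives membership in $\mathbf{Sep}_\omega(K)$.

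Finally, $(3) \iff (4)$ is immediate from Lemma \ref{prop:disisgensep}, which says that discrimination by $K$ coincides with $n$-separation by $K$ for all $n$.

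There is no genuine obstacle here: the theorem is really a bookkeeping corollary that assembles the preceding two lemmas along with the noted interpretation of distinguished open sets. The only mildly subtle point, which I would mention explicitly to avoid confusion, is that membership in $\mathbf{Sep}_\omega(K)$ requires the separating homomorphism to land in $K$ itself rather than in $IS(K)$; this is harmless because composing with an embedding $\mathbf{C} \hookrightarrow \mathbf{C}' \in K$ does not disturb any inequality $h(a_i) \neq h(b_i)$, so the witnesses coming out of $(2)$ can be upgraded from $IS(K)$ to $K$ without altering separation.
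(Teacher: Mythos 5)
Your proposal is correct and follows essentially the same route as the paper: $(1)\iff(2)$ via Lemma~\ref{lem:irreduciblereducedequivcond}, $(2)\iff(3)$ by reading off the distinguished open sets as kernels of separating homomorphisms, and $(3)\iff(4)$ via Lemma~\ref{prop:disisgensep}. Your added clarification about upgrading witnesses from $IS(K)$ to $K$ is a sound (if brief) elaboration of a point the paper already settles in the remark immediately after Definition~2.5, where it notes that $\Phi_V(K)$-distinguished congruences are precisely the kernels of homomorphisms into $K$ itself.
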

\begin{proof}
    $(1) \iff (2)$ are by Lemma \ref{lem:irreduciblereducedequivcond}, $(2) \iff (3)$ by the discussion above, and $(3) \iff (4)$ is by Lemma \ref{prop:disisgensep}.
\end{proof}

From here it is easy to see that the $\varphi$-distinguished congruences $\theta$ for which $\mathbf{A}/\theta$ has an irreducible spectrum form another coherent condition, as clearly $\mathbf{Sep}_\omega(K)$ is closed under embeddings.
We call these congruences $\varphi$-prime.

We end this section by giving a result like the one in classical algebraic geometry.

\begin{theorem}
    If $\mathbf{A}$ is $C$-Noetherian, or $\varphi$ is equationally Noetherian and $\mathbf{A}$ is finitely generated, then every $\varphi$-radical congruence has a decomposition into a finite number of $\varphi$-prime congruences.
\end{theorem}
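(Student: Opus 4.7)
The plan is to pass from the algebraic statement to the topology of $\Spec^\varphi\mathbf{A}$, decompose the relevant Zariski closed set into irreducible components there, and translate back to congruences via the correspondence of Theorem \ref{theo:closedsetisspec}.

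Under either hypothesis, Theorem \ref{theo:eqnoethequivcond} (together with the remark that these conditions behave well under finite generation) tells us that $\Spec^\varphi\mathbf{A}$ satisfies the D.C.C.\ on Zariski topologically closed sets. Fix a $\varphi$-radical congruence $\theta$; then $V_\varphi(\theta)\subseteq\Spec^\varphi\mathbf{A}$ is Zariski closed, and the subspace is again Noetherian. By Proposition \ref{prop:uniqueirrdec}, $V_\varphi(\theta)$ has a finite decomposition $V_\varphi(\theta) = C_1\cup\dots\cup C_n$ into irreducible components, and by Corollary \ref{col:irrsetzar} each $C_i$ is Zariski closed (since $C_i \subseteq C_i^{\text{zc}}$, the latter is irreducible, and maximality forces equality).

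Writing $\theta_i := \psi(C_i)$, each $\theta_i$ is $\varphi$-radical and $C_i = V_\varphi(\theta_i)$ by the generalised Nullstellensatz part I. Theorem \ref{theo:closedsetisspec} gives a homeomorphism $C_i \cong \Spec^\varphi(\mathbf{A}/\theta_i)$, and since direct and preimage of closed sets under a quasi-isomorphism respect unions, the irreducibility of $C_i$ transfers to irreducibility of $\Spec^\varphi(\mathbf{A}/\theta_i)$. Because $\theta_i$ is $\varphi$-radical, $\mathbf{A}/\theta_i$ is $\varphi$-reduced, so Theorem \ref{theo:irrsetisprime} says $\mathbf{A}/\theta_i \in \mathbf{Sep}_\omega(K)$; that is, $\theta_i$ is $\varphi$-prime.

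It remains to recover $\theta$ as the intersection of the $\theta_i$. Since $\theta$ is $\varphi$-radical, the Nullstellensatz part I gives $\theta = \psi(V_\varphi(\theta))$, and $\psi$ takes unions to intersections, so
\[
\theta \;=\; \psi\Bigl(\bigcup_{i=1}^n C_i\Bigr) \;=\; \bigcap_{i=1}^n \psi(C_i) \;=\; \bigcap_{i=1}^n \theta_i,
\]
giving the desired decomposition. The main obstacle I anticipate is a purely bookkeeping one: verifying that topological irreducibility of $C_i$ inside $\Spec^\varphi\mathbf{A}$ really does transport, via the homeomorphism of Theorem \ref{theo:closedsetisspec}, to irreducibility of the whole space $\Spec^\varphi(\mathbf{A}/\theta_i)$ in the sense required by Theorem \ref{theo:irrsetisprime}; this is exactly what the earlier propositions on irreducibility under quasi-isomorphisms and subclosure systems are designed to handle, so the argument should go through without trouble.
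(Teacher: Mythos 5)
Your proposal is correct and follows essentially the same path as the paper's proof: obtain Noetherianness of the spectrum via Theorem \ref{theo:eqnoethequivcond}, take the irreducible component decomposition from Proposition \ref{prop:noethtopology}/\ref{prop:uniqueirrdec}, identify $\psi(C_i)$ as $\varphi$-prime by Theorem \ref{theo:irrsetisprime}, and recover the radical congruence as the intersection. The only (cosmetic) difference is that the paper first passes to the quotient $\mathbf{A}/\theta$ and decomposes $\Delta$, whereas you decompose $V_\varphi(\theta)$ inside $\Spec^\varphi\mathbf{A}$ directly and invoke Theorem \ref{theo:closedsetisspec} to transport irreducibility to $\Spec^\varphi(\mathbf{A}/\theta_i)$ --- the two are equivalent and your extra care in checking that the $C_i$ are Zariski closed and that irreducibility transfers along the homeomorphism is exactly the right bookkeeping.
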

\begin{proof}
    Let $\mathbf{A}$ be an algebra satisfying any of the two hyptheses above for some coherent condition $\varphi$.
    Note that quotients of $C$-Noetherian or finitely generated algebras are respectively $C$-Noetherian and finitely generated, so it is enough to consider the case of $\mathbf{A}$ being reduced and finding a decomposition of $\Delta_A$.\par
    Then by Theorem \ref{theo:eqnoethequivcond}.6, $\Spec^\varphi\mathbf{A}$ is a Noetherian topological space.
    So, let $\Spec^\varphi \mathbf{A} = C_1 \cup \dots \cup C_n$ be the decomposition into irreducible components which is minimal by Proposition \ref{prop:noethtopology}.
    Then define $\psi_i := \psi(C_i)$, which by Theorem \ref{theo:irrsetisprime} is a prime congruence.
    Notice that
    $$\psi_i \supset \psi_1 \cap \dots \cap \psi_n \; \forall i$$
    $$\implies C_i \subseteq V_\varphi(\psi_1 \cap \dots \cap \psi_n)\; \forall i$$
    Hence $V_\varphi(\psi_1 \cap \dots \cap \psi_n) = \Spec^\varphi\mathbf{A}$ and therefore $\psi_1 \cap \dots \cap \psi_n = \Delta_A$.
\end{proof}

Note that we cannot expect this given decomposition to be unique, or even minimal, as $V_\varphi(\psi_1 \cap \psi_2)$ might not be $V_\varphi(\psi_1) \cup V_\varphi(\psi_2)$.

\printbibliography

@book{burris,
    author = "S. Burris, H. P. Sankappanavar",
    title = "A Course in Universal Algebra",
    publisher = "Springer New York, NY",
    year = "1981"
}

@misc{daniyarova2011algebraicgeometryalgebraicstructures,
      title={Algebraic geometry over algebraic structures II: Foundations}, 
      author={Evelina Daniyarova and Alexei Myasnikov and Vladimir Remeslennikov},
      year={2011},
      eprint={1002.3562},
      archivePrefix={arXiv},
      primaryClass={math.AG},
      url={https://arxiv.org/abs/1002.3562}, 
}

@misc{plotkin2002sevenlecturesuniversalalgebraic,
      title={Seven Lectures on the Universal Algebraic Geometry}, 
      author={Boris Plotkin},
      year={2002},
      eprint={math/0204245},
      archivePrefix={arXiv},
      primaryClass={math.GM},
      url={https://arxiv.org/abs/math/0204245}, 
}

@article{Daniyarova2008UnificationTI,
  title={Unification theorems in algebraic geometry},
  author={Evelina Yur'evna Daniyarova and Alexei G. Myasnikov and Vladimir N. Remeslennikov},
  journal={arXiv: Algebraic Geometry},
  year={2008},
  url={https://api.semanticscholar.org/CorpusID:18524203}
}

@article{Equationaldomains,
    author = "Daniyarova É.Y. and Myasnikov, A.G. and Remeslennikov V.N",
    title = "Algebraic geometry over algebraic structures. IV. Equational domains and codomains",
    journal = "Algebra and Logic",
    year = "2011"
}

@article{OneVarEqinFreeGroups,
 ISSN = {00029939, 10886826},
 URL = {http://www.jstor.org/stable/2035339},
 author = {K. I. Appel},
 journal = {Proceedings of the American Mathematical Society},
 number = {4},
 pages = {912--918},
 publisher = {American Mathematical Society},
 title = {One-Variable Equations in Free Groups},
 urldate = {2025-04-09},
 volume = {19},
 year = {1968}
}

@article{GroupsWithParExp,
 ISSN = {00029947},
 URL = {http://www.jstor.org/stable/1993539},
 author = {Roger C. Lyndon},
 journal = {Transactions of the American Mathematical Society},
 number = {3},
 pages = {518--533},
 publisher = {American Mathematical Society},
 title = {Groups With Parametric Exponents},
 urldate = {2025-04-09},
 volume = {96},
 year = {1960}
}

@article{VerbalTopology,
    author = "Roger M. Bryant",
    title = "The Verbal Topology of a Group",
    journal = "Journal of Algebra",
    year = "1977"
}

@article{ZeroDivInAmal,
    author = "I. V. Chirkov and M. A. Shevelin",
    title = "Zero Divisors in Amalgamated Free Products of Lie Algebras",
    journal = "Siberian Mathematical Journal",
    year = "2004"
}

@article{Daniyarova_2006,
   title={Algebraic geometry over free metabelian lie algebras. I. U-algebras and universal classes},
   volume={135},
   ISSN={1573-8795},
   url={http://dx.doi.org/10.1007/s10958-006-0159-x},
   DOI={10.1007/s10958-006-0159-x},
   number={5},
   journal={Journal of Mathematical Sciences},
   publisher={Springer Science and Business Media LLC},
   author={Daniyarova, E. Yu. and Kazatchkov, I. V. and Remeslennikov, V. N.},
   year={2006},
   month=jun, pages={3292–3310} 
}

@article{2Daniyarova_2006,
   title={Algebraic geometry over free metabelian lie algebras. II. Finite-field case},
   volume={135},
   ISSN={1573-8795},
   url={http://dx.doi.org/10.1007/s10958-006-0160-4},
   DOI={10.1007/s10958-006-0160-4},
   number={5},
   journal={Journal of Mathematical Sciences},
   publisher={Springer Science and Business Media LLC},
   author={Daniyarova, E. Yu. and Kazatchkov, I. V. and Remeslennikov, V. N.},
   year={2006},
   month=jun, pages={3311–3326} 
}

@article{Daniyarova_2005,
   title={Semidomains and Metabelian Product of Metabelian Lie Algebras},
   volume={131},
   ISSN={1573-8795},
   url={http://dx.doi.org/10.1007/s10958-005-0456-9},
   DOI={10.1007/s10958-005-0456-9},
   number={6},
   journal={Journal of Mathematical Sciences},
   publisher={Springer Science and Business Media LLC},
   author={Daniyarova, E. Yu. and Kazatchkov, I. V. and Remeslennikov, V. N.},
   year={2005},
   month=dec, pages={6015–6022} 
}

\end{document}